\documentclass{amsart}
\usepackage{mathrsfs}
\usepackage{amssymb}
\usepackage{amsmath}
\usepackage{amsthm}
\usepackage{amscd}
\usepackage{epstopdf}
\usepackage{stmaryrd}
\usepackage{stmaryrd}
\usepackage{enumerate}
\usepackage{hyperref}

\usepackage{epsfig}

\newtheorem{theorem}{Theorem}[section]
\newtheorem{conjecture}[theorem]{Conjecture}
\newtheorem*{theorem*}{Conjecture}
\newtheorem*{claim*}{Claim}
\newtheorem{claim}[theorem]{Claim}
\newtheorem{lemma}[theorem]{Lemma}
\newtheorem{proposition}[theorem]{Proposition}
\newtheorem{corollary}[theorem]{Corollary}

\newtheorem{Theorem}{Theorem}[section]

\newtheorem{Corollary}[Theorem]{Corollary}

\newenvironment{thm}[1]
  {\innercustomthm}
  {\endinnercustomthm}

\theoremstyle{remark}
\newtheorem*{remark}{Remark}

\theoremstyle{definition}
\newtheorem{definition}{Definition}

\newcommand{\Q}{\mathbb{Q}}
\newcommand{\Z}{\mathbb{Z}}
\newcommand{\R}{\mathbb{R}}
\newcommand{\C}{\mathbb{C}}
\newcommand{\F}{\mathbb{F}}
\newcommand{\bP}{\mathbb{P}}
\newcommand{\fp}{\mathfrak{p}}
\newcommand{\be}{\bf{e}}

\newcommand{\Oo}{\mathcal{O}}

\newcommand{\Mgn}{\mathcal{M}_{g,n}}

\renewcommand{\P}{\mathbb{P}}

\DeclareMathOperator{\Der}{Der}
\DeclareMathOperator{\Spec}{Spec}
\DeclareMathOperator{\GL}{GL}
\DeclareMathOperator{\hol}{hol}

\DeclareMathOperator{\tr}{tr}
\DeclareMathOperator{\SL}{SL}

\DeclareMathOperator{\Hom}{Hom}
\DeclareMathOperator{\Aut}{Aut}

\DeclareMathOperator{\PGL}{PGL}

\newcommand{\Hb}{\mathbb{H}}
\newcommand{\A}{\mathbb{A}}

\DeclareMathOperator{\SU}{SU}
\DeclareMathOperator{\SO}{SO}
\DeclareMathOperator{\Gal}{Gal}

\setcounter{tocdepth}{1}

\begin{document}
\title[Rank two $p$-curvature conjecture on generic curves]{The rank two $p$-curvature conjecture\\on generic curves}
\author{Anand Patel, Ananth N. Shankar, and Junho Peter Whang}
\date{\today}

\begin{abstract}
    We prove the $p$-curvature conjecture for rank two vector bundles with connection on generic curves, by combining deformation techniques for families of varieties and topological arguments.
\end{abstract}

\maketitle

\tableofcontents

\section{Introduction}\label{sect:1}
\subsection{\unskip}\label{sect:1.1}
This paper proves the Grothendieck--Katz $p$-curvature conjecture for rank two vector bundles with connection on generic curves punctured at generic sets of points. Formulated as follows, the conjecture gives an arithmetic criterion for certain differential equations on algebraic varieties to admit algebraic solutions. Let $(V,\nabla)$ be a vector bundle with integrable connection on a smooth variety $X$ over $\C$. Let $R \subset \C$ be a subring finitely generated over $\Z$ to which $X$ and $(V,\nabla)$ descend. We will again denote by $X$ and $(V,\nabla)$ the corresponding objects over $R$. For all but finitely many prime numbers $p$ one can reduce $X$ and $(V,\nabla)$ modulo $p$ to obtain a vector bundle with connection $(V/p,\nabla/p)$ on the $R/p$-scheme $X/p$. Associated to such a system is an operator $\psi_{p}$, its $p$-curvature, whose vanishing is equivalent to the vector bundle $V/p$ being spanned by its sheaf of parallel sections.

\begin{conjecture}[The $p$-curvature conjecture]\label{ConjA}
If $(V,\nabla)$ has vanishing $p$-curvature for almost all prime numbers $p$, then it has a full set of algebraic solutions, i.e.~it trivializes on a finite \'etale cover of $X$.
\end{conjecture}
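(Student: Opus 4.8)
A proof of Conjecture~\ref{ConjA} in full remains out of reach; the plan below reduces it to a single essential case, recalls what is presently known there, and describes the route by which a new case --- rank two on a generic curve --- is settled. \textbf{Step 1 (reduction to an affine curve).} Embedding $X$ in a smooth compactification, I would restrict $(V,\nabla)$ to a general smooth affine curve $C \subset X$ cut out by general hyperplane sections. By the Lefschetz-type theorem for quasi-projective varieties (Hamm--L\^{e}), the image of $\pi_1(C) \to \pi_1(X)$ has finite index, and a representation of $\pi_1(X)$ whose restriction to a finite-index subgroup has finite image is finite itself; since the $p$-curvature of a restriction is the restriction of the $p$-curvature, the hypothesis descends to $C$, and $C$ together with $(V,\nabla)$ can be taken to descend to a subring of $\C$ finitely generated over $\Z$. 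So it suffices to treat affine curves.

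\textbf{Step 2 (d\'evissage to the irreducible case).} I would induct on $\operatorname{rank} V$. If $(V,\nabla)$ is reducible it has a horizontal subbundle $(V',\nabla')$ with horizontal quotient $(V'',\nabla'')$, and since the $p$-curvature of a horizontal subquotient is the corresponding subquotient of $\psi_p$, both $(V',\nabla')$ and $(V'',\nabla'')$ inherit vanishing $p$-curvature for almost all $p$ and hence, by induction, trivialize on finite \'etale covers of $C$. Pulling back to a common cover we may assume $V' \cong \mathcal{O}^{a}$ and $V'' \cong \mathcal{O}^{b}$ with their trivial connections, so the extension class lies in $H^1_{\mathrm{dR}}(C) \otimes \Hom(\mathcal{O}^{b},\mathcal{O}^{a})$; the classical analysis of unipotent connections (Katz) then shows that vanishing $p$-curvature for almost all $p$ forces this class to vanish over $\C$, whence $(V,\nabla)$ has finite monodromy. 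We are left with the case of an irreducible $(V,\nabla)$ on an affine curve, whose differential Galois group $G$ --- the Zariski closure of the monodromy group --- is then reductive; the goal is to show $G$ is finite, equivalently $\operatorname{Lie}(G) = 0$.

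\textbf{Step 3 (the irreducible case: the main obstacle).} Here one must pass from an arithmetic input --- the $p$-curvatures vanish for almost all $p$ --- to an analytic conclusion --- finiteness of the monodromy of the flat bundle on $C(\C)$ --- and no technique is known that does this in general; this is the heart of the conjecture. The cases one can presently dispose of are: rank one, where finiteness is elementary; connections of geometric origin, via Katz's global-nilpotence arguments; situations governed by Arakelov-theoretic algebraicity criteria (Andr\'e, Bost), which require extra control of slopes or Hodge filtrations; rigid local systems, via their motivic nature; and certain local systems on locally symmetric varieties (Farb--Kisin). This paper proves a genuinely new case --- rank two on a \emph{generic} curve punctured at a \emph{generic} set of points --- by a different route, combining deformations of families of curves with topological arguments.

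\textbf{Step 4 (the rank-two strategy, and where the difficulty lies).} For $(V,\nabla)$ of rank two on a very general $n$-punctured curve $C$ of genus $g$, and after a rank-one twist (handled by Step 2) so that the monodromy lies in $\SL_2(\C)$, I would place $C$ and $(V,\nabla)$ in a family over a piece of $\Mgn$ carrying connections with vanishing $p$-curvature throughout; the vanishing propagates over the whole family, which constrains the representation $\pi_1(C) \to \SL_2(\C)$ as $(C,\text{punctures})$ moves. Transporting this into the $\SL_2$-character variety of the underlying punctured surface, the point would be to show that the orbit of the representation under the mapping class group is finite, whence $\pi_1(C)$ has finite image and $(V,\nabla)$ has finite monodromy. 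The rigidity this requires --- understanding the dynamics of the mapping class group action on the character variety well enough to exclude infinite orbits --- is the crux of the whole argument, and is precisely the step that does not visibly extend to higher rank or to special curves; I expect essentially all of the difficulty to concentrate there.
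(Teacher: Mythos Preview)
The statement is the full Grothendieck--Katz conjecture, which the paper does not claim to prove; there is no ``paper's own proof'' to match. Your Steps~1--3 are standard reductions and a fair acknowledgment of where the conjecture remains open (though the unipotent-extension case in Step~2 is due to the Chudnovskys, Andr\'e, and Bost rather than to Katz). The substantive comparison is between your Step~4 and the paper's actual proof of the rank-two generic-curve case (Theorem~\ref{curve}).

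Your Step~4 is not the paper's route. You propose to place the curve in a family over $\Mgn$ and argue that the mapping class group orbit of the monodromy representation in the $\SL_2$-character variety is finite. The paper does something rather different. First, prior work \cite{ananth} shows, via nodal degenerations, that vanishing $p$-curvatures on a generic curve force finite monodromy along every \emph{simple loop}. The new content is then a pair of purely topological results (Theorems~\ref{top} and~\ref{top0}): a semisimple $\SL_2(\C)$-representation of a surface group with finite monodromy along every simple loop (for $g\geq 1$), or with finite restriction to every pair of pants (for $g=0$), has finite image. These are not proved by character-variety dynamics but by showing directly that all traces are algebraic integers (an induction on self-intersection number) and that the representation and all its Galois conjugates are unitarizable; Kronecker's theorem then forces every eigenvalue to be a root of unity, and Selberg's lemma finishes. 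For $g=0$ the simple-loop hypothesis is genuinely insufficient (triangle groups give counterexamples), so one must verify the pair-of-pants condition: this is Theorem~\ref{genuszero}, established by degenerating the generic genus-zero curve to a nodal curve with a $\P^1\setminus\{0,1,\infty\}$ component, extending the connection there (a good-reduction argument, Theorem~\ref{goodredcurve}), and invoking Katz's resolution of the rank-two conjecture on $\P^1\setminus\{0,1,\infty\}$.

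So your orbit-finiteness heuristic is in the right spirit --- the paper even remarks that its methods feed into exactly such a study in \cite{bgmw} --- but it is not the mechanism here, and your sketch misses the concrete arithmetic inputs (integrality of traces, unitarity, Kronecker) and the degeneration-to-$\P^1\setminus\{0,1,\infty\}$ step that actually carry the argument.
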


We define a smooth curve $C/\C$ of genus $g$ with $n$ punctures to be \emph{generic} if it descends to a base $R\subset\C$ whose associated morphism $\Spec R\to\mathcal{M}_{g,n}$ to the moduli stack of curves is dominant. Our main result is:

\begin{Theorem}
\label{curve}
The $p$-curvature conjecture is true for rank $2$ vector bundles with connection on a generic curve of genus $g\geq0$ with $n\geq0$ punctures.
\end{Theorem}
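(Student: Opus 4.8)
The strategy is to prove that the monodromy representation of $(V,\nabla)$ has finite image; the desired conclusion is then immediate. Fix a finitely generated $R\subset\C$ to which $C$ and $(V,\nabla)$ descend with $\Spec R\to\Mgn$ dominant.

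\emph{Step 1 (reductions).} Twisting by a rank one connection to trivialize the determinant and invoking Conjecture~\ref{ConjA} in the (known) rank one case, I reduce to monodromy in $\SL_2$; the cases in which $\pi_1(C)$ is abelian likewise reduce to rank one, so I may assume $\pi_1(C)$ nonabelian. By Katz's global nilpotence theorem, the hypothesis forces $(V,\nabla)$ to have regular singularities with quasi-unipotent local monodromies and rational exponents; after a finite cover of $C$ ramified along the punctures I may further take the local monodromies unipotent.

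\emph{Step 2 (genericity and the mapping class group --- the crux).} Genericity means $C$ varies in a family sweeping out a dense subset of $\Mgn$, so the relevant symmetry is the mapping class group $\Gamma_{g,n}$ acting on the character variety $X_{g,n}(\SL_2)$, and the aim of this step is to show that the class $[\rho]$ of the monodromy has finite $\Gamma_{g,n}$-orbit. The idea is that the algebraic family $\mathcal{C}\to\Spec R$ with its connection --- an object of finite type dominating $\Mgn$ --- forces the isomonodromic leaf through $[\rho]$ to be an algebraic variety dominating $\Mgn$, hence a finite cover of an open subset of $\Mgn$, which is equivalent to finiteness of the $\Gamma_{g,n}$-orbit of $[\rho]$. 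This combines the deformation theory of the non-abelian Gauss--Manin (isomonodromy) connection with finiteness properties of the $\Gamma_{g,n}$-action on $X_{g,n}(\SL_2)$, and I expect it to be the main obstacle: one must check that the vanishing $p$-curvature hypothesis survives the isomonodromic spreading-out, and one needs a dynamical input ruling out infinite bounded orbits on the character variety.

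\emph{Step 3 (the family and its $p$-curvature).} Finiteness of the orbit produces, after passing to a finite cover $M\to\Mgn$ and performing Hecke modifications that do not change the monodromy, an integrable rank two connection $(\widetilde V,\widetilde\nabla)$ on the total space $\mathcal{C}_M$ of the universal curve over $M$, restricting to $(V,\nabla)$ on the generic fibre. I then verify that $(\widetilde V,\widetilde\nabla)$ again has vanishing $p$-curvature for almost all $p$: the component of the $p$-curvature along the fibres vanishes on the generic fibre and hence, by flatness, on all of $\mathcal{C}_M$, while the component along $M$ is governed by a non-abelian Katz--Oda type computation and is killed by the Kodaira--Spencer data, which the hypothesis controls on the generic fibre.

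\emph{Step 4 (structure theory and conclusion).} The local system $(\widetilde V,\widetilde\nabla)$ on the smooth quasi-projective variety $\mathcal{C}_M$ has quasi-unipotent monodromy at infinity, so the Corlette--Simpson classification of rank two local systems applies and yields three cases. If $(\widetilde V,\widetilde\nabla)$ is projectively finite then, the determinant being finite, its monodromy is finite. If it is of geometric origin --- in particular in the rigid case --- then Katz's theorem for Gauss--Manin connections, combined with the vanishing $p$-curvature from Step 3, forces finite monodromy. If it is pulled back from a local system on a one-dimensional orbifold, arising from a family of abelian varieties, then vanishing $p$-curvature forces that family to be isotrivial, again giving finite monodromy. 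In every case $(V,\nabla)$ has finite monodromy and hence trivializes on a finite \'etale cover of $C$. Finally, the pairs $(g,n)$ with $\dim\Mgn\leq 0$ --- namely $(g,n)=(0,3)$, where such local systems are rigid, and the cases with $\pi_1(C)$ abelian --- are treated directly.
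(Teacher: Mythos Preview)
Your approach diverges sharply from the paper's, and the decisive gap is in Step~2. You claim that the algebraic family $\mathcal{C}\to\Spec R$ dominating $\Mgn$ forces the isomonodromic leaf through $[\rho]$ to be an algebraic variety dominating $\Mgn$, and hence that $[\rho]$ has finite $\Gamma_{g,n}$-orbit. But the relative connection $(V,\nabla)$ yields only a (multi-valued) \emph{section} of the relative character variety over an open in $\Mgn$; a priori the monodromy varies with the base point, so this section need not lie in a single isomonodromic leaf, and nothing you have written forces any leaf to be algebraic. Establishing isomonodromy from the $p$-curvature hypothesis is exactly the hard content --- the paper proves it only conditionally (Theorem~\ref{isomonodromy}) or after one fibre is already known to have finite monodromy (Theorem~\ref{oneall}). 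You yourself flag that ``dynamical input ruling out infinite bounded orbits'' is needed but do not supply it; without Step~2, the extension to the total space in Step~3 and the Corlette--Simpson case analysis in Step~4 never get off the ground. Step~3 also leaves the vanishing of the base-direction $p$-curvature to an unspecified ``non-abelian Katz--Oda type computation,'' which is not a proof.

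The paper bypasses the orbit question entirely. For $g\geq 1$ it invokes \cite{ananth} (on a generic curve, vanishing $p$-curvatures force finite monodromy along every simple loop) and then proves a purely topological statement, Theorem~\ref{top}: a semisimple $\SL_2(\C)$-representation of a positive-genus surface group with finite monodromy along every simple loop has finite image. For $g=0$ it proves the analogue for pairs of pants (Theorem~\ref{top0}) and manufactures the needed input --- finite monodromy on every pair of pants --- by degenerating the generic curve to nodal configurations containing $\P^1\setminus\{0,1,\infty\}$, extending the connection there (Theorem~\ref{goodredcurve}), and applying Katz's rank-two result on $\P^1\setminus\{0,1,\infty\}$ (Theorem~\ref{genuszero}). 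The topological theorems themselves are proved by an archimedean/nonarchimedean bounding argument (traces are algebraic integers, the image is conjugate into $\SU(2)$), then Kronecker and Selberg. In short, the paper replaces your global character-variety dynamics with local control along simple loops and pants, together with elementary number theory.
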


Theorem \ref{curve} implies in particular that the $p$-curvature conjecture holds in rank two for curves over a Baire-generic set of complex points $\mathcal{M}_{g,n}(\C)$ on the moduli space. Other known cases of the $p$-curvature conjecture include Gauss-Manin connections by Katz \cite{katz2}, connections on certain locally symmetric varieties by Farb--Kisin \cite{fk}, and connections with solvable monodromy by Andr\'e \cite{Andre}, Bost \cite{Bost}, and  D.~Chudnovsky--G.~Chudnovsky \cite{Chudnovskis}. We will focus on the case $3g+n-3>0$ of Theorem \ref{curve} in this paper, since the case $(g,n)=(0,3)$ is known by the work of Katz on hypergeometric differential equations and the remaining cases where $2g+n\leq2$ have abelian (and therefore solvable) monodromy. 

\subsection{Local systems on topological surfaces}
\label{sect:1.2}
In Section \ref{sect:2}, we obtain conditions for finiteness of rank two representations for fundamental groups of topological surfaces. In \cite{ananth}, the second author used nodal degenerations of curves to prove that, over a generic curve $C$, every vector bundle with connection $(V,\nabla)$ with almost all $p$-curvatures zero must have finite monodromy along all simple loops on $C$. Combining this with the following topological result, we obtain a proof of Theorem \ref{curve} for genus $g\geq1$.

\begin{Theorem}
\label{top}
Let $\Sigma$ be a topological surface of genus $g\geq1$ with $n\geq0$ punctures. If a semisimple representation $\rho:\pi_1\Sigma\to\GL_2(\C)$ of its fundamental group has finite monodromy along every simple loop on $\Sigma$, then the image of $\rho$ is finite.
\end{Theorem}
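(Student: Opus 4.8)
The plan is to analyze a semisimple $\rho:\pi_1\Sigma\to\GL_2(\C)$ whose restriction to every simple closed curve has finite image, and deduce that the whole image is finite. I would first reduce to the case of an \emph{irreducible} representation: if $\rho$ is reducible, being semisimple it is a direct sum of two characters $\chi_1\oplus\chi_2$, and since $H_1(\Sigma;\Z)$ is generated by simple loops (standard for surfaces with $g\geq1$), the finiteness of $\chi_i$ on each simple loop forces $\chi_i$ to have finite image, hence $\rho$ does too. So assume $\rho$ is irreducible. Next I would pass to the associated projective representation $\bar\rho:\pi_1\Sigma\to\PGL_2(\C)$, since the image of $\rho$ is finite if and only if the image of $\bar\rho$ is finite together with the determinant character $\det\rho$ having finite image — and $\det\rho$ is a homomorphism to $\C^\times$ that is finite on simple loops, hence (by the $H_1$ argument again) finite. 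Thus it suffices to show $\bar\rho(\pi_1\Sigma)$ is finite.

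Now the key geometric input: for $g\geq1$ and $3g+n-3>0$, the mapping class group acts on the set of (isotopy classes of) simple closed curves, and crucially there exist simple closed curves $a,b$ that intersect once (a ``handle''), together with enough simple curves to generate. I would use the finiteness hypothesis to control the image. A simple loop $\gamma$ with $\bar\rho(\gamma)$ of finite order in $\PGL_2(\C)$ is elliptic (conjugate into $\PSU(2)$) or has finite order; in any case $\bar\rho(\gamma)$ is \emph{semisimple} with eigenvalue ratio a root of unity. The heart of the argument is a rigidity/boundedness statement: an irreducible subgroup of $\PGL_2(\C)$ all of whose ``simple-loop'' generators are elliptic of bounded-or-arbitrary finite order must be finite. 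I would exploit the change-of-coordinates principle for the mapping class group: applying mapping classes to a fixed handle pair $(a,b)$, all the resulting elements $\bar\rho(\phi(a))$ are elliptic; if the image were infinite, its closure would be a positive-dimensional subgroup of $\PGL_2(\C)$, and I would derive a contradiction by producing (via Dehn twists or curve surgeries) a simple closed curve whose image is a nontrivial unipotent or loxodromic element — impossible since every simple loop maps to a finite-order element. Concretely: if $\bar\rho(a)$ and $\bar\rho(b)$ are both elliptic but do not share a common invariant structure, their product along the curve obtained by surgery $T_a(b)$ or $ab$-type curves can be made loxodromic unless the group preserves a point or a pair of points on $\P^1$ or lies in $\PSU(2)$; the first two contradict irreducibility, and $\rho$ landing (up to conjugacy) in $\SU(2)$ with all simple loops of finite order forces finiteness by a separate argument on $\SU(2)$.

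I expect the main obstacle to be this last dichotomy — ruling out an infinite image inside $\PSU(2)$ (or more generally a compact form) when only simple loops are known to have finite order. A free group inside $\SU(2)$ can certainly have every element of infinite order, so the constraint ``\emph{simple} loops have finite order'' must be leveraged genuinely topologically: one needs that simple closed curves are abundant enough — e.g. that the Dehn twist orbit of a fixed curve, or the collection of all non-separating simple closed curves, is Zariski-dense in a sense that forces any Lie subgroup containing all their images to be everything, and then that finiteness of \emph{all} of these elements (not just two generators) pins down the group. I would handle this by an induction on complexity via the curve complex, or by passing to a finite-index subgroup / finite cover of $\Sigma$ where the representation restricted to a subsurface becomes more tractable, combined with the observation that a finitely generated subgroup of $\PGL_2(\C)$ with infinitely many elements of finite order but bounded irreducibility data is, by Jordan-type theorems, virtually abelian — and a virtually abelian irreducible subgroup of $\PGL_2(\C)$ is finite or dihedral-infinite, the latter excluded because an infinite dihedral image would contain a simple loop mapping to an infinite-order element. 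Finally I would assemble these cases: irreducible $\Rightarrow$ not preserving a point; finite on all simple loops $\Rightarrow$ (via surgeries) no infinite-order loxodromic or parabolic in the image $\Rightarrow$ image lies in a compact form $\Rightarrow$ virtually abelian by the above $\Rightarrow$ finite.
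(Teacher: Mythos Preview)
Your reductions to the irreducible case and to $\PGL_2$ via $(\mathrm{pr},\det)$ are fine and match the paper. The sketch for showing the image lies in a compact form is loose but the paper does something similar in spirit (showing traces are real via the trace identity $\tr(xy)+\tr(xy^{-1})=\tr(x)\tr(y)$ together with an induction on self-intersection number, then reducing to $\SU(2)$ versus $\SL_2(\R)$ and handling the latter by a fixed-point argument on $\Hb^2$ organized around once-holed tori).

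The genuine gap is exactly where you flag it: the $\SU(2)$ case. Your proposed resolution is incorrect. It is \emph{not} true that a finitely generated irreducible subgroup of $\PGL_2(\C)$ with infinitely many torsion elements is virtually abelian; two generic finite-order rotations in $\SO(3)$ generate an infinite, non-virtually-abelian (often free) group, and no Jordan-type statement rescues this. The inference ``image lies in a compact form $\Rightarrow$ virtually abelian'' is simply false. You also cannot conclude ``every element has finite order'' from ``every simple loop has finite order'' by soft arguments: the simple-loop condition is not obviously a group-theoretic one on the image.

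What the paper does instead is an arithmetic trick you are missing entirely. The induction on self-intersections shows $\tr\rho(a)$ is an \emph{algebraic integer} for every $a\in\pi_1\Sigma$, so one may take $\rho$ into $\SL_2(\bar\Q)$. Any Galois conjugate $\rho^\sigma$ again has finite monodromy on simple loops, so the archimedean bound (unitarizability) applies to every $\rho^\sigma$. Thus the eigenvalues of $\rho(a)$ are algebraic integers all of whose Galois conjugates have absolute value $1$; by Kronecker they are roots of unity, so every $\rho(a)$ has finite order, and Selberg's lemma finishes. This Galois--Kronecker step is the missing idea that closes the compact case.
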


The hypothesis $g\geq1$ is essential in Theorem \ref{top}, as Fuchsian triangle groups furnish counterexamples to the analogous statement for $g=0$. For surfaces of genus zero, we instead have the following. By a pair of pants on a given surface we shall mean a subsurface of genus zero with three boundary curves.

\begin{Theorem}
\label{top0}
Let $\Sigma$ be a topological surface of genus zero with $n\geq0$ punctures. If a semisimple representation $\rho:\pi_1\Sigma\to\GL_2(\C)$ has finite restriction to every pair of pants on $\Sigma$, then the image of $\rho$ is finite.
\end{Theorem}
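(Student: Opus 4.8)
The plan is to reduce the statement to finiteness of the projectivized representation $\bar\rho\colon\pi_1\Sigma\to\PGL_2(\C)$, and then to prove that by induction on $n$ combined with a dichotomy on the Zariski closure of the image. \emph{Reduction to $\PGL_2(\C)$.} Each loop $\gamma_i$ around a puncture is freely homotopic into some pair of pants, so $\rho(\gamma_i)$ has finite order; hence $\det\rho(\gamma_i)$ is a root of unity and, as the $\gamma_i$ generate, $\det\rho$ has finite image. A finitely generated subgroup of $\GL_2(\C)$ with finite determinant whose image in $\PGL_2(\C)$ is finite is itself finite (any scalar it contains has square a root of unity), so it suffices to prove $\bar H:=\bar\rho(\pi_1\Sigma)$ finite. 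If $\rho$ is reducible it is, being semisimple, a sum of two characters $\chi_1\oplus\chi_2$; each $\chi_j(\gamma_i)$ is a root of unity, so $\chi_1,\chi_2$ and hence $\rho$ are finite. So assume $\rho$, equivalently $\bar\rho$, is irreducible.

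\emph{Induction and the dichotomy.} Induct on $n$; for $n\le3$ the statement is clear ($\Sigma$ is a pair of pants when $n=3$). For $n\ge4$, any essential simple closed curve $\alpha$ splits $\Sigma$ into genus-zero subsurfaces $\Sigma_A,\Sigma_B$ with fewer (and $\ge3$) punctures; every pair of pants of $\Sigma_A$ or $\Sigma_B$ is one of $\Sigma$, so by induction $\bar\rho(\pi_1\Sigma_A)$ and $\bar\rho(\pi_1\Sigma_B)$ are finite, and since they generate $\bar H$, the group $\bar H$ is generated by two finite subgroups sharing the finite-order element $\bar\rho(\alpha)$. Let $G$ be the Zariski closure of $\bar H$ in $\PGL_2(\C)$. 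As $\bar\rho$ is irreducible, $G$ fixes no point of $\P^1$, and inspecting the connected subgroups of $\PGL_2(\C)$ shows that if $\bar H$ is infinite then either $G=\PGL_2(\C)$, or $G^\circ$ is a maximal torus $T$ and $G\subseteq N(T)$.

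\emph{The torus-normalizer case.} If $G\subseteq N(T)=T\rtimes\langle w\rangle$, then $\bar\rho$ induces a homomorphism $\epsilon\colon\pi_1\Sigma\to\Z/2$ with $\bar\rho(\gamma_i)\in T$ when $\epsilon(\gamma_i)=0$ and $\bar\rho(\gamma_i)\in Tw$ when $\epsilon(\gamma_i)=1$; note every element of $Tw$ has order $2$ in $\PGL_2(\C)$. Irreducibility forces $S=\{i:\epsilon(\gamma_i)=1\}$ to be nonempty (necessarily of even size), and after acting by a mapping class of $\Sigma$ --- which permutes the $\gamma_i$ up to conjugacy and preserves all hypotheses --- I may take $S=\{1,\dots,s\}$ to be an interval in the cyclic order of the punctures. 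Then $\bar H\cap T$ has index at most $2$ in $\bar H$, and a Reidemeister--Schreier computation, together with the fact that $w$ acts on $T$ by inversion, shows $\bar H\cap T$ is generated by conjugates of the $\bar\rho(\gamma_j)$ with $j\notin S$ and by the products $\bar\rho(\gamma_i\gamma_{i+1})$ with $i,i+1\in S$ (telescoping $\bar\rho(\gamma_1\gamma_j)=\prod_{k<j}\bar\rho(\gamma_k\gamma_{k+1})$ using $\bar\rho(\gamma_k)^2=1$). All of these are $\bar\rho$ of simple closed curves, hence of finite order, so $\bar H\cap T$ is abelian and generated by finitely many torsion elements, therefore finite, whence $\bar H$ is finite.

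\emph{The main obstacle.} It remains to exclude $G=\PGL_2(\C)$, i.e.\ to show $\bar H$ cannot be Zariski dense. Here I would use, for a pants decomposition of $\Sigma$, that $\bar H$ is generated by the finite vertex groups $\bar\rho(\pi_1 P)$ sharing along each edge the finite-order element $\bar\rho(\alpha)$, together with the classification of finite subgroups of $\PGL_2(\C)$: a cyclic or dihedral subgroup containing an element of order $\ge3$ preserves that element's pair of fixed points in $\P^1$, so two \emph{non-platonic} adjacent vertex groups whose common curve has $\bar\rho$-image of order $\ge3$ must share this pair; propagating through the connected decomposition graph and then varying the decomposition would force $\bar H$ to preserve a pair of points of $\P^1$, contradicting Zariski density --- \emph{unless} some vertex group is one of $A_4,S_4,A_5$, or some boundary monodromy has order $\le2$. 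Disposing of these leftover configurations is the genuinely hard part; I expect to need either a finite case analysis exploiting that the platonic groups are bounded, or the arithmetic observation that all puncture- and simple-closed-curve-traces are algebraic integers, which for irreducible $\bar\rho$ pins $\bar H$ inside an $\SL_2$-arithmetic group and hence makes it discrete at the archimedean places --- incompatible with an infinite image whose restriction to every pair of pants is finite. With this settled, the induction closes.
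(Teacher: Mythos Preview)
Your reduction to $\PGL_2$, the reducible case, and the torus-normalizer case are fine, and the inductive setup is natural. But the proof has a genuine gap: you explicitly leave the Zariski-dense case unresolved. Two finite subgroups of $\PGL_2(\C)$ can certainly generate a Zariski-dense infinite group --- two copies of $A_5$ sharing only an involution, or two generic cyclic groups --- so the induction hypothesis alone does not close the argument, and neither of your sketched endgames is complete. The case analysis on platonic vertex groups and low-order edge monodromies does not obviously terminate: varying the pants decomposition need not simultaneously avoid all platonic configurations, and there is no bound preventing an unbounded chain of $A_5$-vertices. The ``arithmetic observation'' that simple-closed-curve traces are algebraic integers gives integrality but not the archimedean control you would need for discreteness; integrality of traces is perfectly compatible with a Zariski-dense image.

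The paper avoids this obstacle by a different mechanism. After reducing to $\SL_2$, it proves two bounds. The nonarchimedean one (Lemma~\ref{nonarch}) is exactly your arithmetic observation, proved by induction on self-intersection number. The archimedean one (Lemma~\ref{arch0}) is what you are missing: since the restriction to every pair of pants is unitarizable, every simple-loop trace lies in $[-2,2]$, hence the full character is real and the image lands in $\SU(2)$ or $\SL_2(\R)$; in the $\SL_2(\R)$ case one produces a common fixed point in $\Hb^2$ by noting that for an optimal generating sequence $(c_1,\dots,c_n)$ each pair $(c_1,c_i)$ spans a pair of pants, whose compact image must fix the unique fixed point of $\rho(c_1)$. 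With both bounds in hand, every Galois conjugate of $\rho$ is also $\SU(2)$-valued, so all eigenvalues are algebraic integers with all archimedean absolute values equal to $1$, hence roots of unity by Kronecker; Selberg's lemma then gives finiteness. The missing ingredient in your argument is precisely this archimedean half.
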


Given a rank $2$ bundle with connection on a generic curve of genus zero almost all of whose $p$-curvatures vanish, we shall prove (Theorem \ref{genuszero}) that its monodromy representation (if semisimple) satisfies the conditions of Theorem \ref{top0}; this allows us to conclude Theorem \ref{curve} in the genus zero case. Our strategy behind the proof of Theorems \ref{top} and \ref{top0} is the following. Upon reducing to the case of representations into $\SL_2(\C)$, we shall show that, if a representation $\rho:\pi_1\Sigma\to\SL_2(\C)$ satisfies the hypotheses of Theorem \ref{top} or \ref{top0}, then
\begin{enumerate}
	\item (nonarchimedean bound) $\tr\rho(a)$ is an algebraic integer for all $a\in\pi_1\Sigma$; and
	\item (archimedean bound) the image of $\rho$ is conjugate to a subgroup of $\SU(2)$.
\end{enumerate}
By (1), we can assume without loss of generality that $\rho$ has image in $\SL_2(\bar\Q)$. Moreover, any conjugate of $\rho$ by an element $\sigma\in\Gal(\bar\Q/\Q)$ of the absolute Galois group of $\Q$ also satisfies the hypotheses of Theorem \ref{top} or \ref{top0}, since having finite order is invariant under $\Gal(\bar\Q/\Q)$-action. Thus, the eigenvalues of $\rho(a)$ are algebraic integers whose Galois conjugates all have absolute value $1$ in $\C$, by (1) and (2). By Kronecker's theorem, these eigenvalues must be roots of unity, and $\rho(a)$ has finite order for every $a\in\pi_1\Sigma$ (note that $\rho(a)$ is semisimple by (2)). This allows us to conclude, by Selberg's lemma, that the image of $\rho$ is finite.

In a forthcoming paper of Biswas--Gupta--Mj--Whang \cite{bgmw}, the methods of Section \ref{sect:2} are also applied in the study of $\SL_2(\C)$-representations of surface groups with finite or bounded mapping class group orbits in the character variety.

\subsection{Deformation techniques}
Our main goal beginning in Section \ref{sect:3} is to prove following result, used in the proof of Theorem \ref{curve} for $g=0$.
\begin{Theorem}\label{genuszero} Suppose $C$ is a  genus $0$ curve with $n$ generic punctures, and let $(V,\nabla)$ be a rank 2 vector bundle with connection on $C$ with $p$-curvatures vanishing for almost all prime numbers $p$. 
If $P \subset C(\C)^{an}$ is any pair of pants, then $(V,\nabla)^{an}$ restricted to $P$ has finite monodromy.
\end{Theorem}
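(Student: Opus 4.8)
The plan is to use the genericity of $C$ to degenerate it, inside $\overline{\mathcal{M}}_{0,n}$, to a nodal curve whose ``central'' component is a thrice-punctured sphere realizing the pair of pants $P$, to argue that $(V,\nabla)$ degenerates compatibly to a rank two connection on that sphere, and then to invoke the known $(g,n)=(0,3)$ (hypergeometric) case of the $p$-curvature conjecture recalled in Section~\ref{sect:1.1}. We may assume $n\geq 4$: if $n\leq 2$ there is no pair of pants, and if $n=3$ then $P=C$ and the statement \emph{is} the $(g,n)=(0,3)$ case. Cutting the $n$-punctured sphere $C(\C)^{an}$ along the three boundary curves $\gamma_1,\gamma_2,\gamma_3$ of $P$ leaves $P$ together with three complementary pieces, and these partition the $n$ punctures into three subsets $S_1,S_2,S_3$ (we may take each nonempty, the remaining cases being easier). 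This partition names a closed boundary stratum $Z\subset\overline{\mathcal{M}}_{0,n}$ whose general point is a nodal curve $P_0\cup B_1\cup B_2\cup B_3$, where the ``bubble'' $B_i$ carries the points of $S_i$ when $|S_i|\geq 2$, and $P_0\cong\P^1$ carries the corresponding nodes together with any singleton marked points; as a $3$-pointed curve $P_0$ is rigid. Since $C$ is generic, the map $\Spec R\to\mathcal{M}_{0,n}$ is dominant, so $Z$ lies in the closure of its image and $C$ specializes to a curve in $Z$.

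To make this precise, I would spread out the situation over a finitely generated $\Z$-algebra, and choose a discrete valuation ring $\mathcal{O}$ (taken to have residue field of characteristic zero, embeddable in $\C$) together with a map $\Spec\mathcal{O}\to\overline{\mathcal{M}}_{0,n}$ whose generic point is the point defined by $C$ (after a finite extension of the ground field) and whose closed point lies in $Z$; this is possible because the generic point of $\overline{\mathcal{M}}_{0,n}$ specializes to the generic point of $Z$. Pulling back the universal stable curve and the connection gives a family $\overline{\mathcal{C}}\to\Spec\mathcal{O}$ with smooth generic fiber $C$ carrying $(V,\nabla)$ and nodal special fiber containing the central component $P_0$. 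Because $(V,\nabla)$ has vanishing $p$-curvature for almost all $p$ it has regular singularities (Katz), so --- after a finite base change and a suitable modification of the model --- it extends to a relative logarithmic connection on $\overline{\mathcal{C}}$ with poles along the $n$ sections and the nodes. Restricting this extension to $P_0$ produces a rank two connection $(V_0,\nabla_0)$ on $\P^1$ with logarithmic poles at three points, i.e.\ on $\P^1\setminus\{0,1,\infty\}$.

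Two claims then finish the proof. First, $(V_0,\nabla_0)$ has vanishing $p$-curvature for almost all $p$: on the reduction mod $p$ of the spread-out model, the logarithmic connection restricts on the dense locus of smooth curves to the reduction of $(V,\nabla)$, whose $p$-curvature vanishes for $p$ in the good set; since the vanishing locus of $p$-curvature is closed, it contains the whole special fiber, hence its central component, so $(V_0,\nabla_0)\bmod p$ has vanishing $p$-curvature. Second, the local system of $(V_0,\nabla_0)^{an}$ on $\P^1\setminus\{0,1,\infty\}$ is isomorphic to that of $(V,\nabla)^{an}|_P$: in this degeneration the curves $\gamma_i$ are the vanishing cycles, the monodromy of $(V,\nabla)$ around each $\gamma_i$ is recovered from the residue $R_i$ of the logarithmic extension at the corresponding node (so it is ``frozen'' along the family) and matches the monodromy of $(V_0,\nabla_0)$ around the corresponding puncture; as $\pi_1$ of a pair of pants is generated by its three boundary loops, the two local systems agree. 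Granting these, $(V_0,\nabla_0)$ is a rank two connection on the thrice-punctured sphere with vanishing $p$-curvature for almost all $p$, hence has finite monodromy --- by the known $(g,n)=(0,3)$ case if it is irreducible, and by the solvable-monodromy case (Andr\'e, Bost, Chudnovsky--Chudnovsky) if it is reducible. Therefore $(V,\nabla)^{an}|_P$ has finite monodromy.

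The main obstacle I anticipate is the second claim together with the logarithmic bookkeeping behind it: one must extend $(V,\nabla)$ to the chosen degeneration as a logarithmic connection reaching the deepest boundary stratum (requiring control of the exponents, a finite base change, and a semistable model), identify the restriction to the central component with care, and --- since $(V,\nabla)$ is given only on $C$ rather than as a family --- justify via the theory of log connections and nearby cycles that the vanishing-cycle monodromies, and hence the whole local system on $P$, are correctly read off from $(V_0,\nabla_0)$. The first claim and the combinatorial set-up are comparatively routine, and the final appeal to the hypergeometric and solvable cases is immediate.
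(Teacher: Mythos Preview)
Your overall strategy---degenerate to a nodal curve whose central component is $\mathbb{P}^1\setminus\{0,1,\infty\}$, invoke Katz there, and transport back---is exactly the paper's. But both of your two key claims have genuine gaps, and in each case the missing ingredient is a nontrivial use of the $p$-curvature hypothesis that the paper supplies with dedicated machinery.

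\textbf{Extension to the special fiber.} Your sentence ``Because $(V,\nabla)$ has vanishing $p$-curvature for almost all $p$ it has regular singularities (Katz), so \ldots\ it extends to a relative logarithmic connection on $\overline{\mathcal{C}}$'' conflates two different directions. Katz's theorem gives regular singularities at the $n$ punctures of $C$, i.e.\ in the \emph{fiber} direction. It says nothing about poles in the \emph{base} direction $q$: a relative connection on the generic fiber can have connection matrix with arbitrary poles in $q$ while still being regular along the punctures for each fixed $q\neq 0$. (Concretely, on $\mathbb{P}^1\setminus\{0,1,\infty,t\}$ take $\nabla=d+\tfrac{A(t)}{z}\,dz+\cdots$ with $A(t)$ blowing up at $t=0$.) There is no Deligne-type canonical extension available here because $(V,\nabla)$ is a \emph{relative} connection, not an absolute one on the total space. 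The paper's Theorem~\ref{goodredcurve} is precisely the statement that vanishing $p$-curvatures force the connection matrix to be $q$-integral; its proof is an honest $p$-adic estimate (Proposition~\ref{reduceto}) and is not bypassed by regularity. Note also that Theorem~\ref{goodredcurve} is stated for codimension-one points, which is why the paper blows up the codimension~$2$ and~$3$ strata in $\overline{\mathcal{M}}_{0,n}$ and then slices by a curve $B$; your direct specialization to a deep stratum via a DVR would still need to be reduced to this divisorial situation.

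\textbf{Isomonodromy.} Even granting a good extension, your ``frozen via residues'' argument does not show that the monodromy of $(V,\nabla)|_{P_b}$ equals that of $(V_0,\nabla_0)$. On a fixed pair of pants the monodromy of a holomorphically varying connection varies holomorphically and is generally nonconstant; the residues at the nodes only compute the \emph{limit} of the boundary monodromies as $b\to 0$, not their values for $b\neq 0$. The paper handles this with Proposition~\ref{formalconstancy} (formal constancy of the connection modulo $q^m$, using $p$-curvature vanishing a second time via an auxiliary self-extension $(V',\nabla')$) and the bootstrapping Theorem~\ref{thm:bootstrap}, which together force the kernel of the family of monodromy representations on $P$ to be independent of $b$. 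You correctly flag this step as the main obstacle, but it is not ``bookkeeping'': it is a second substantive application of the $p$-curvature hypothesis, without which the conclusion fails.
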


We can explain the significance of Theorem \ref{genuszero} in two ways. First, the content of the $p$-curvature conjecture is to produce a purely topological outcome from arithmetic input. Since a pair of pants is topologically equivalent to $\bP^{1}\setminus{\{0,1,\infty\}}$, one might attempt to show that for a vector bundle with connection $(V,\nabla)$ with vanishing $p$-curvatures on an {\sl arbitrary} curve $X$, its monodromy when restricted to a pair of pants $P \subset X$ is finite. This is precisely what we do, with the assumption that the curve is \emph{generic}.   Secondly, it is well-known that the universal truth of Conjecture \ref{ConjA} for arbitrary $X$ is equivalent to the truth of the conjecture for the particular case $X = \bP^{1} \setminus{\{0,1,\infty\}}$.  However, this equivalence is not rank-preserving.  In this context, our Theorem \ref{genuszero} gives a ``rank-preserving'' way to transfer information from $\bP^{1}\setminus \{0,1,\infty\}$ to generically punctured genus zero curves.

The central idea in the proof is to use nodal degenerations of the generic curve to reducible curves containing $\mathbb{P}^1 \setminus{\{0,1,\infty\}}$ as irreducible components. The proof is carried out in two broad steps. 
\begin{enumerate}
    \item We first show that the vector bundle with connection extends to the irreducible component equal to $\mathbb{P}^1 \setminus{\{0,1,\infty\}}$.
    
    \item We then use that the $p$-curvature conjecture for $\mathbb{P}^1 \setminus{\{0,1,\infty\}}$ is known in rank 2 (this is due to Katz \cite{katz2}), to deduce the finiteness of monodromy of $(V,\nabla)$ restricted to the pair of pants.
\end{enumerate}
The key technique used in carrying out these steps involves a systematic study of the effect of vanishing $p$-curvatures in families of varieties. Our first result along these lines is: 

\begin{Theorem}\label{goodredcurve}
Let $C \rightarrow B$ be a smooth, faithfully flat family of irreducible curves over a smooth irreducible base $B$ defined over a number field. Let $b\in B$ be a codimension-one point whose closure $E$ is defined over a number field, and let $B^o$ be the complement of $E$ in $B$. If $(V,\nabla)$ is a vector bundle on $C \times_B B^o$ with connection relative to $B^o$ with $p$-curvatures vanishing for almost all prime numbers $p$, then $(V,\nabla)$ extends to a vector bundle with connection over the fiber $C_b$. 
\end{Theorem}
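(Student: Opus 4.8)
The plan is to reduce to a purely local statement over a discrete valuation ring and then remove the pole of the connection along the special fibre by a reduction‑modulo‑$p$ analysis.

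First I would localize $B$ at $b$, replacing it by $\Spec R$ where $R=\mathcal{O}_{B,b}$ is a discrete valuation ring with uniformizer $t$, fraction field $K$ (the function field of $B$), and residue field $\kappa$ (the function field of $E$). Set $\mathcal{C}=C\times_B\Spec R$, a regular two‑dimensional scheme, smooth over $R$, with irreducible special fibre $\mathcal{C}_\kappa$; then $(V,\nabla)$ lives on the generic fibre $\mathcal{C}_K$ with a $K$‑linear connection of vanishing $p$‑curvature for almost all $p$, and the task is to extend it as a relative‑over‑$R$ bundle with connection across $\mathcal{C}_\kappa$ (afterwards one restricts to $C_b$). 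Because $\mathcal{C}$ is regular of dimension two, the underlying bundle extends: take any coherent extension of $V$ and pass to its reflexive hull, obtaining a locally free sheaf $\mathcal{V}$ on $\mathcal{C}$; the connection then extends to $\nabla\colon\mathcal{V}\to\mathcal{V}\otimes\Omega^1_{\mathcal{C}/R}(m\,\mathcal{C}_\kappa)$ for a minimal integer $m\ge 0$. (There are no punctures to worry about, as these are simply absent from $C$, so $\mathcal{C}_\kappa$ is the only relevant new divisor.) The whole content of the theorem is that $\mathcal{V}$ can be modified along $\mathcal{C}_\kappa$ so that $m=0$.

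To attack this I would work at the generic point $\eta$ of $\mathcal{C}_\kappa$: here $\mathcal{O}_{\mathcal{C},\eta}$ is a discrete valuation ring with uniformizer $t$ and residue field $L=\kappa(\eta)$, and a relative coordinate $x$ gives a vector field $\partial_x$ with $\partial_x t=0$, so that in a local frame $\nabla_{\partial_x}=\partial_x+t^{-m}A$ with $A$ integral and $A_0:=A\bmod t\neq 0$ whenever $m\ge 1$. Spreading out over a finitely generated $\Z$‑subalgebra and reducing modulo a maximal ideal of residue characteristic $p$, the hypothesis gives $\psi_p=0$ for almost all such reductions; since $\partial_x^{(p)}=0$ for a coordinate, one has $\psi_p(\partial_x)=\nabla_{\partial_x}^{p}$, whose term of lowest order in $t$ is $t^{-mp}\bar A_0^{\,p}$, because $\partial_x$ annihilates $t$ and hence cannot lower $t$‑adic orders. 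Thus $\bar A_0$ is nilpotent for almost all reductions, so $A_0$ is nilpotent whenever $m\ge 1$.

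The main obstacle is the passage from ``leading term nilpotent'' to ``$m=0$''. A single $p$, and even the leading‑term analysis at all $p$, is insufficient: already $\nabla_{\partial_x}=\partial_x+t^{-m}N_0$ with $N_0$ a constant nilpotent matrix has $\psi_p\equiv 0$ and a nonzero pole, removed only by the $x$‑dependent gauge $\exp(-t^{-m}N_0 x)$. I would therefore induct on the pair $(m,\operatorname{rank}A_0)$: using nilpotence of $A_0$, a shearing---legitimate because it merely conjugates, as $\partial_x t=0$---normalizes $A_0$, after which a sub‑connection and a quotient connection with strictly smaller pole appear, and one seeks an $x$‑dependent gauge transformation over $\mathcal{O}_{\mathcal{C},\eta}[t^{-1}]$ strictly decreasing the invariant. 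Its existence is where vanishing $p$‑curvature at \emph{infinitely many} $p$ enters essentially: once the problem is pushed down to the extension class of the remaining rank‑one (unipotent) subquotient, $\nabla_{\partial_x}^{p}$ computes, up to the nilpotent truncation, the iterate $\partial_x^{p-1}$ of that class, and its vanishing modulo almost all $p$ forces the corresponding $1$‑form on $\mathcal{C}_\kappa$ to become exact after a finite \'etale cover---this is the abelian case of the $p$‑curvature conjecture, due to D.~Chudnovsky--G.~Chudnovsky, and it can in fact be seen directly through the Cartier operator. Feeding this back yields the gauge transformation removing one unit of the invariant; after finitely many steps $m$ drops to $0$, and, extension across the remaining codimension‑two locus being automatic for a reflexive sheaf with connection, $(V,\nabla)$ extends across $C_b$.
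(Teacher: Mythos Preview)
Your setup is fine: localizing at $b$, extending the bundle by the reflexive hull on the regular surface $\mathcal{C}$, and writing $\nabla_{\partial_x}=\partial_x+t^{-m}A$ at the generic point of the special fibre are all legitimate, and your leading-term computation showing that $A_0$ must be nilpotent is correct.

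The gap is the inductive step. Nilpotence of $A_0$ and a shearing do \emph{not} produce a $\nabla$-stable sub-bundle with smaller pole: if you put $A_0$ in strictly upper-triangular form, the span of $e_1$ is not $\nabla$-stable, since the lower-left entries of $A$ are only required to vanish to first order in $t$, not identically. Successive shearings in rank~$2$ can cycle (this is the Turrittin phenomenon, which in the classical setting forces ramified base change), and you give no mechanism that terminates the process here. Your appeal to the abelian $p$-curvature conjecture for the ``extension class'' is not well posed: the off-diagonal part of $t^{-m}A$ is not the connection form of a rank-one bundle with vanishing $p$-curvatures, and even if you could isolate a $1$-form on $\mathcal{C}_\kappa$ whose exactness after a finite \'etale cover followed from Chudnovsky, exactness \emph{after a cover} does not let you gauge away the pole over $\mathcal{C}_\kappa$ itself. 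In short, the reduction from ``$A_0$ nilpotent'' to ``$m$ decreases'' is asserted but not established.

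The paper's argument is organized quite differently and sidesteps these difficulties. After shrinking, it chooses a relative derivation $D$ with $D^p\equiv D\pmod{\mathfrak p}$ (for instance $x\,\partial_x$) and, crucially, a \emph{cyclic vector} for $\nabla(D)$: in that basis the connection matrix is a companion matrix whose only nontrivial entries are $f_0,\dots,f_{r-1}$ in the last column. The claim becomes $v_q(f_i)\ge 0$ for all $i$, and this is proved at a \emph{single} prime $p>r$ by contradiction. If some $f_i$ has a pole, then the eigenvalue $\lambda$ of $A$ of largest $q$-adic absolute value has $|\lambda|>1$; an explicit estimate (Lemma~\ref{sofya}) on the coordinates of the eigenvector $w_\lambda$ shows that among all length-$p$ words in $A$ and $D$, the word $A^p$ strictly dominates the last coordinate, so that $|(\nabla(D)^p w_\lambda)[r-1]|=|\lambda|^p|w_\lambda[r-1]|>|\lambda|\,|w_\lambda[r-1]|=|(\nabla(D^p)w_\lambda)[r-1]|$, contradicting $\psi_p=0$. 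There is no induction on the pole order, no shearing, and no use of Chudnovsky--Chudnovsky; the cyclic basis concentrates the pole in the $f_i$'s and the eigenvalue inequality does the rest.
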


This result is one of the main steps involved in the proof of Theorem \ref{genuszero}. It implies (using the Lefschetz hyperplane theorem) a similar result for families of higher-dimensional varieties. To simplify exposition, we  restrict ourselves to the following setting.
\subsubsection{Setting}\label{parasetting}
Let $B$ denote a smooth irreducible variety over a number field $k$, let $X \rightarrow B$ be a smooth projective morphism, and let $(V,\nabla)$ be a vector bundle on $X \times_B B^o$ with connection relative to $B^o$, where $B^o \subset B$ is some nonempty open subvariety defined over $\bar{k}$. 

\begin{Corollary}[Good reduction]\label{goodred}
Maintain the notation above and let $b \in B$ denote a codimension-one point of $B \setminus B^{o}$. If the $p$-curvatures of $(V,\nabla)$ vanish for almost all primes $p$, then there exists an \'etale neighbourhood $U$ of $b$ such that the pair $(V,\nabla)$ extends to an analytic vector bundle on $X \times_B U$ with connection relative to $U$. 
\end{Corollary}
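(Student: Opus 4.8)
The plan is to deduce the corollary from the curve case, Theorem \ref{goodredcurve}, by slicing $X$ with a Lefschetz pencil and then transporting the resulting extension back up along a surjection of fundamental groups.

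\emph{Step 1: reduction to a family of curves.} After replacing $k$ by a finite extension (still a number field) and using a relative very ample line bundle to fix an embedding $X \hookrightarrow \P^N_B$, I would pick a sufficiently general linear subspace $\Lambda \subset \P^N$ of complementary dimension and put $C := X \cap (\Lambda \times_k B)$. Running Bertini and generic smoothness fibrewise — imposing the finitely many resulting open conditions simultaneously over the generic point of $B$ and over $b$, each being a dense open subset of the relevant Grassmannian, so that a flag defined over a number field meets all of them — and then shrinking $B$ to a Zariski neighbourhood of $b$, one arranges: $C \to B$ is a smooth faithfully flat family of irreducible curves; $B \setminus B^o = E$; and $\pi_1 C_t \to \pi_1 X_t$ is surjective for every $t$, by the Lefschetz hyperplane theorem for fundamental groups (an isomorphism at each intermediate hyperplane section of dimension $\geq 2$, a surjection at the final slice from a surface to a curve, provided all intermediate sections are smooth). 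Since $p$-curvature is functorial under the closed immersion $C \times_B B^o \hookrightarrow X \times_B B^o$, the restriction $(V,\nabla)|_{C \times_B B^o}$ still has vanishing $p$-curvature for almost all $p$; Theorem \ref{goodredcurve} then extends it to a vector bundle with connection over the fiber $C_b$ — that is, to a relative flat connection on $C \times_B B'$ for an open neighbourhood $B'$ of $b$.

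\emph{Step 2: transport along $\pi_1$.} This is carried out analytically near an arbitrary point $e \in E(\C)$. Trivializing both families over a polydisc $\Delta \subset B(\C)$ with $\Delta \cap E = \{z_1 = 0\}$ (Ehresmann), compatibly for $C|_\Delta \hookrightarrow X|_\Delta$, and fixing a basepoint section through $C$, the relative connection corresponds on $\Delta^o := \Delta \setminus E$ to a holomorphic family $\mathfrak r : \Delta^o \to \Hom(\pi_1 X_e, \GL_r(\C))$, where $r$ is the rank of $V$, with restriction $\mathfrak r_C = \mathfrak r \circ i_*$ along the surjection $i_* : \pi_1 C_e \twoheadrightarrow \pi_1 X_e$. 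Step 1 provides a holomorphic extension $\tilde{\mathfrak r}_C$ of $\mathfrak r_C$ over all of $\Delta$. Since $X_e$ and $C_e$ are compact, $\ker i_*$ is the normal closure of finitely many elements $\gamma_1, \dots, \gamma_m$; the relations $\tilde{\mathfrak r}_C(z)(\gamma_j) = I$ hold on the dense subset $\Delta^o$, hence on $\Delta$, so $\tilde{\mathfrak r}_C$ factors through $i_*$ and yields a holomorphic extension $\tilde{\mathfrak r} : \Delta \to \Hom(\pi_1 X_e, \GL_r(\C))$ of $\mathfrak r$. Relative Riemann--Hilbert converts $\tilde{\mathfrak r}$ into an analytic relative flat connection on $X|_\Delta / \Delta$ extending $(V,\nabla)^{an}$. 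Because such an extension of $(V,\nabla)^{an}$ across $E$, when it exists, is determined by the extended relative monodromy, these local extensions are canonical and patch; gluing with $(V,\nabla)^{an}$ on $X \times_B B^o$ produces the desired analytic extension over $X \times_B U$ for an étale neighbourhood $U$ of $b$.

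I expect the main obstacle to be Step 1: choosing a single relative linear section that is smooth with irreducible fibers over a neighbourhood of $b$, that realizes the Lefschetz $\pi_1$-surjection fibrewise there, and that is defined over a number field, so that all hypotheses of Theorem \ref{goodredcurve} hold on the nose. Once this is set up, Step 2 is the short group-theoretic argument above, modulo the standard relative Riemann--Hilbert dictionary between relative flat connections and holomorphic families of fibrewise monodromy representations over a contractible base.
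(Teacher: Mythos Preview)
Your proposal is correct and follows essentially the same route as the paper: slice $X$ by a relative Lefschetz pencil to get a family of curves, invoke Theorem~\ref{goodredcurve} on that family, and then push the extension back up to $X$ via the fibrewise $\pi_1$-surjection. The only noteworthy difference is organizational: the paper first makes a quasi-finite base change $U\to B$ to obtain a global section $p:U\to X\times_B U$ (this is the sole reason the word ``\'etale'' appears in the statement), and then packages Step~2 through its formalism of $f$-local systems and Proposition~\ref{proposition:extension}; you instead work locally over analytic polydiscs with local basepoint sections and carry out the factoring-through-$\pi_1 X$ argument directly. Both arguments are the same relative Riemann--Hilbert step in different clothing. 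One small point you gloss over but which is harmless: the passage from ``$(V,\nabla)$ extends over $C_b$'' to ``extends over $C\times_B B'$ for a Zariski neighbourhood $B'$ of $b$'' uses the reflexive-hull/Hartogs extension across codimension $\geq 2$ exactly as in Lemma~\ref{lemma:C'C}, and the residual codimension $\geq 3$ locus in $C$ has image of codimension $\geq 2$ in $B$, hence misses $b$.
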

Corollary \ref{goodred} applies in greater generality than stated. For a more general form of Corollary \ref{goodred}, see Section 3. We note that without the assumption on $p$-curvatures vanishing, there do exist families of vector bundles with connections which do not extend to all codimension-one points of the base. 

Our subsequent results concern isomonodromy (see Definition \ref{isomonodromydef} in \S 4 for a precise definition). In general it is expected (and indeed, would follow from Conjecture \ref{ConjA}) that, given a family of vector bundles with flat connection, the vanishing of $p$-curvatures for almost all $p$ implies constancy of the monodromy representation. Let us consider the following conjecture, which is implied by the $p$-curvature conjecture: 
\begin{conjecture}\label{ConjB}
Let $(V,\nabla)$ be a vector bundle with connection on $X$ so that almost all $p$-curvatures vanish. Then, $(V,\nabla)$ has semisimple monodromy. 
\end{conjecture}
Proving Conjecture \ref{ConjA} in the case of semisimple monodromy would imply Conjecture \ref{ConjA} in full generality as the $p$-curvature conjecture is known in the case of solvable monodromy. We prove the following result in \S \ref{sect:4}, conditional on Conjecture \ref{ConjB}.  

\begin{Theorem}\label{isomonodromy}
Let $X \rightarrow B$ denote a family of smooth projective varieties, and let $(V,\nabla)$ denote a vector bundle on $X$ with flat connection relative to $B$. If the $p$-curvatures of $(V,\nabla)$ vanish for almost all primes $p$, then Conjecture \ref{ConjB} implies that the monodromy of $(V,\nabla)$ pulled back to $X_b$ does not depend on $b \in B$. 
\end{Theorem}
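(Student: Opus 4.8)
The plan is to reduce the statement to a local monodromy-constancy question and then exploit Conjecture \ref{ConjB} together with the deformation results of \S\ref{sect:3}. First I would observe that, since the conclusion ``the monodromy of $(V,\nabla)|_{X_b}$ is independent of $b$'' is a statement about a locally constant family, it suffices to prove constancy locally on $B$ in the analytic topology, and moreover, after passing to a finite \'etale cover of $B$ and base-changing, it suffices to treat the case where $B$ is a small polydisc and $X\to B$ is topologically a product. In this local setting the family of monodromy representations $\rho_b:\pi_1(X_b)\to\GL(V_b)$ is organized, via the identification $\pi_1(X_b)\cong\pi_1(X)$ coming from the product structure, into a single continuous (in fact, the key point, \emph{algebraic}) map from $B$ to the character variety of $\pi_1(X_b)$; our task is to show this map is constant.

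The heart of the argument is to produce enough rigidity to force that map to be constant. Here is where Conjecture \ref{ConjB} and the $p$-curvature hypothesis enter: applying Conjecture \ref{ConjB} to $(V,\nabla)$ as a bundle with connection on the total space $X$ (relative to a point, i.e.\ with its full flat connection after restricting over any affine chart, or by taking an \'etale slice) gives semisimplicity of the relevant monodromy, and more usefully, applying it fiberwise shows each $\rho_b$ is semisimple. Semisimple representations are determined up to conjugacy by their character, i.e.\ by the functions $b\mapsto\tr\rho_b(\gamma)$ for $\gamma\in\pi_1(X_b)$. I would then argue that each such trace function is simultaneously (a) the restriction to $B$ of an algebraic function --- because the family $(V,\nabla)$ and its monodromy are defined over a number field and the traces of monodromy, being periods, vary algebraically / the character variety map is a morphism of algebraic varieties --- and (b) ``arithmetically constrained'' by the vanishing of $p$-curvatures in the family, via Corollary \ref{goodred} and Theorem \ref{goodredcurve}: at every codimension-one degeneration of $B$ the pair $(V,\nabla)$ extends, so the monodromy cannot blow up or acquire new behaviour along boundary divisors, which combined with properness-type compactness pins the trace functions down. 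The cleanest route is probably: extend $B$ to a projective variety $\bar B$; by repeated application of Corollary \ref{goodred} across all boundary divisors, $(V,\nabla)$ and hence the character-variety map extend over all of $\bar B$ (at least after further blow-ups, using that extension is a codimension-one and then Hartogs-type statement); an algebraic map from a proper variety to an affine variety (the character variety is affine) is constant on each connected component; hence the traces are constant, hence $\rho_b$ is constant up to conjugacy, which is exactly the conclusion.

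More precisely I would structure the proof as follows. Step 1: reduce to $B$ smooth, and after an \'etale base change, to $X\to B$ admitting a topological product structure, so that we get a well-defined classifying morphism $\Phi\colon B\to \mathcal{X}$ to the $\GL_2$-character variety (or the $\SL_2$-character variety after the standard reduction) of the surface group $\pi_1(X_b)$; here one must check $\Phi$ is a morphism of varieties over $\bar k$, which follows from the algebraicity of de Rham--Betti comparison / the fact that horizontal sections over the total space furnish the monodromy and these are algebraic in $b$. Step 2: by Conjecture \ref{ConjB} applied fiberwise, $\Phi$ lands in the locus of semisimple (closed) points, so $\Phi$ is determined by the induced map to the affine GIT quotient, i.e.\ by the tuple of trace coordinates. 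Step 3: take a smooth projective compactification $\bar B\supset B$ with normal crossings boundary; using Corollary \ref{goodred} (good reduction at codimension-one points, applied to each boundary component after localizing) together with a Hartogs/normality argument to fill in higher codimension, show that $(V,\nabla)$ --- and with it the monodromy data, hence $\Phi$ --- extends to a morphism $\bar\Phi\colon\bar B\to\mathcal X$. Step 4: since $\bar B$ is proper and $\mathcal X$ is affine, $\bar\Phi$ is constant on $\bar B$, hence $\Phi$ is constant on $B$; therefore $\rho_b$ is independent of $b$ up to conjugacy, which is the assertion.

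The main obstacle I expect is Step 3, specifically making rigorous the passage from ``$(V,\nabla)$ extends across each codimension-one point of $\bar B\setminus B$'' (which is exactly Corollary \ref{goodred}) to ``the classifying morphism $\Phi$ extends as a \emph{morphism of algebraic varieties} over all of $\bar B$.'' One has to be careful that the extended connection on the boundary fibers still has the same type of monodromy (the boundary fibers $X_b$ may be singular, so one should work with the smooth locus or with a semistable model and track vanishing cycles), and one must control what happens in codimension $\geq 2$ on $\bar B$ --- here the character variety being affine (hence its structure sheaf being a subsheaf of any reasonable extension) plus normality of $\bar B$ should give the needed Hartogs extension, but the bookkeeping of degenerate fibers and the compatibility of the various extensions provided by Corollary \ref{goodred} on overlapping charts is the delicate part. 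A secondary subtlety is the initial reduction in Step 1 to a topological product: in general $X\to B$ need not be topologically trivial even after \'etale base change if $B$ is not simply connected, so one should instead phrase Step 1 as: it suffices to prove constancy of $\Phi$ on a Zariski-dense open $B^\circ$ over which the family is a product (such a $B^\circ$ exists after an \'etale cover, e.g.\ by Ehresmann over a contractible analytic neighbourhood and then spreading out), and then use that a morphism of varieties constant on a dense open is constant.
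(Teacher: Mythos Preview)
Your global ``proper-to-affine'' strategy is genuinely different from the paper's approach, but Step~3 has a real gap that I do not see how to close. Corollary~\ref{goodred} requires the family $X\to B$ to be \emph{smooth} at the codimension-one point $b$ where you wish to extend $(V,\nabla)$; this is built into the setting~\ref{parasetting}. In your argument you compactify $B$ to $\bar B$ and want to apply Corollary~\ref{goodred} at the boundary divisors of $\bar B\setminus B$, but there is no reason the smooth projective family $X\to B$ extends to a smooth family over $\bar B$. In general it will acquire singular fibers, and then Corollary~\ref{goodred} simply does not apply; moreover the fundamental group of the fiber changes, so there is no character variety in which $\Phi$ could take a value at those points. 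You flag this yourself (``the boundary fibers $X_b$ may be singular''), but ``work with the smooth locus or a semistable model and track vanishing cycles'' is not a fix: the local monodromy around a bad fiber can be nontrivial, and nothing you have written rules out the trace functions blowing up there. Without the extension of $\Phi$ over $\bar B$, the properness argument collapses.

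A secondary point: your claim in Step~1 that $\Phi$ is a morphism of \emph{algebraic} varieties is not correct---monodromy of an algebraic connection is transcendental in general (think of periods of a family of elliptic curves). This is not fatal, since a holomorphic map from a compact complex manifold to an affine variety is already constant by the maximum principle, so holomorphicity of $\Phi$ would suffice; but you should not invoke algebraicity.

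The paper's proof avoids compactification entirely and is purely local and infinitesimal. At each $b\in B$ with local parameter $q$, one shows (Proposition~\ref{formalconstancy}) that $(V,\nabla)$ is isomorphic to its own constant deformation modulo $q^m$ for every $m$. The mechanism is: the obstruction to lifting constancy from order $m-1$ to order $m$ is encoded in an auxiliary rank-$2r$ bundle with connection $(V',\nabla')$ on the central fiber, which is a self-extension of $(V,\nabla)|_{C_0}$. One checks directly that the $p$-curvatures of $(V',\nabla')$ vanish (Claim~\ref{ctwo}), and then Conjecture~\ref{ConjB} applied to $(V',\nabla')$---not to $(V,\nabla)$ fiberwise as you propose---forces this self-extension to split, killing the obstruction. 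Formal constancy at every point then gives actual constancy over simply connected analytic neighborhoods (Proposition~\ref{isomon}), hence over all of $B$. This is where Conjecture~\ref{ConjB} really does work: on a fixed fiber, for a bundle constructed from the infinitesimal deformation data, rather than as a global constraint along the base.
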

 
The proof of this result relies on relating the infinitesimal deformations of $(V,\nabla)_b$ to self-extensions of $(V,\nabla)_{b}$. In the case when the monodromy of some single fiber is finite, then it is possitble to unconditionally show that such non-trivial self-extensions cannot exist. Indeed, the connected component of the Zariski-closure of monodromy would then be non-trivial and unipotent, and the work of \cite{Andre,Bost,Chudnovskis} implies that this is not possible. Therefore, without assuming Conjecture \ref{ConjB}, we get:

\begin{Theorem}\label{oneall}
Maintain the setting of $X \to B$, $(V,\nabla)$ as in \ref{parasetting}, and suppose that the $p$-curvatures vanish for almost all prime numbers $p$. If $(V,\nabla)_{b_0}$ has finite monodromy for some $b_0 \in B$, then $(V,\nabla)_b$ has finite monodromy for every $b\in B$. 
\end{Theorem}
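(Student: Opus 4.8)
The plan is to deform from $b_0$ and argue that finiteness of monodromy is both open and closed on $B$ under the vanishing $p$-curvature hypothesis; since $B$ is irreducible, this gives the result. Because the fibers $X_b$ are all diffeomorphic (as $X\to B$ is a smooth projective family over a connected base, after passing to the analytic category we may assume $B$ is path-connected), the monodromy representations $\rho_b:\pi_1(X_b)\to\GL(V_b)$ all have the same source group $\pi_1(X_{b_0})$ up to the canonical identifications coming from the local system structure in the $B$-direction — that is, the representations fit together into a local system of representations, and what we must show is that this family is locally constant. So the real content is: if $\rho_{b_0}$ is finite, the infinitesimal deformations of $(V,\nabla)_{b_0}$ coming from moving $b$ are trivial, forcing local constancy, hence finiteness, of $\rho_b$ for $b$ near $b_0$.

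First I would set up the infinitesimal picture exactly as flagged in the paragraph preceding the statement: the first-order deformation of $(V,\nabla)_b$ induced by a tangent vector at $b\in B$ lives in $H^1(X_b, \mathcal{E}nd(V,\nabla)_b)$ — the de Rham cohomology of the endomorphism bundle with its induced connection — and such a class is precisely the obstruction class of a self-extension $0\to (V,\nabla)_b\to (W,\nabla)\to (V,\nabla)_b\to 0$. Now invoke the hypothesis: since $p$-curvatures of $(V,\nabla)$ on the total space $X$ vanish for almost all $p$, the same holds for $\mathcal{E}nd(V,\nabla)$ and hence for any subquotient-built extension $(W,\nabla)$, as $p$-curvature is functorial and additive in exact sequences. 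Thus $(W,\nabla)_b$ is itself a flat bundle with almost all $p$-curvatures vanishing.

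Next comes the key step, which is the unconditional replacement for Conjecture \ref{ConjB} in the finite-monodromy case. Suppose the extension $(W,\nabla)_b$ were nonsplit. Then the Zariski closure $G$ of the monodromy of $(W,\nabla)_b$ would surject onto the (finite) monodromy closure of $(V,\nabla)_b$ with kernel a nontrivial \emph{unipotent} group — concretely, $G^\circ$ would be nontrivial and unipotent, coming from the off-diagonal $\mathrm{Hom}(V_b,V_b)$ block. But a connection whose monodromy has nontrivial unipotent connected closure has, up to finite étale cover, a subquotient isomorphic to the nontrivial extension of the trivial connection by itself on some curve in $X_b$ — equivalently its monodromy is infinite solvable — and the solvable case of the $p$-curvature conjecture, due to André \cite{Andre}, Bost \cite{Bost}, and the Chudnovskys \cite{Chudnovskis}, forbids this: vanishing $p$-curvatures force finite, not merely solvable, monodromy. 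Hence $(W,\nabla)_b$ splits, every first-order deformation class in $H^1(X_b,\mathcal{E}nd(V,\nabla)_b)$ coming from $B$ vanishes, and the monodromy $\rho_b$ is constant in a neighborhood of $b_0$; in particular it stays finite. This shows the locus $\{b : \rho_b \text{ finite}\}$ is open. For closedness, run the same argument at any $b_1$ in the closure of the finite locus: the image of $\pi_1$ is finitely generated, traces of its generators are locally constant (isomonodromy near any point of the finite locus, just established), so $\rho_{b_1}$ has the same — finite — trace field data and the same finite image; more simply, the finite locus is open, its complement is open by the identical argument applied to non-finiteness being an open condition once we know monodromy is locally constant wherever it is finite. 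Since $B$ is irreducible and the finite locus is nonempty, it is all of $B$.

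The main obstacle is making the second step — ruling out nonsplit self-extensions — fully rigorous: one must be careful that the unipotent part of the monodromy closure of $(W,\nabla)_b$ is genuinely nontrivial when the extension is nonsplit (this requires knowing the extension class is detected by monodromy, i.e. comparing de Rham $\mathrm{Ext}^1$ with Betti $\mathrm{Ext}^1$, which holds by the Riemann–Hilbert correspondence since $X_b$ is smooth projective), and that the solvable-monodromy results apply in the needed generality — namely to a flat bundle on $X_b$ whose monodromy is an extension of a finite group by a unipotent group. The cleanest route is to restrict along an embedded curve in $X_b$ on which the unipotent part is already visible and apply \cite{Andre,Bost,Chudnovskis} there, then note that finiteness of monodromy of $(W,\nabla)_b$ would contradict nonsplitness. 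I also need the mild point that ``finite monodromy'' is preserved under the implicit identification of $\pi_1(X_b)$ for varying $b$, which is immediate from the homotopy-lifting property for the smooth fibration $X^{an}\to B^{an}$.
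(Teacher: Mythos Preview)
Your strategy matches the paper's: the deformation-to-self-extension idea, with \cite{Andre,Bost,Chudnovskis} forcing the extension to split, is exactly the mechanism behind Proposition~\ref{formalconstancy} (Claims~\ref{cone} and~\ref{ctwo}). But there are two genuine gaps in the execution.

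The main one is the jump from ``every first-order deformation class vanishes'' to ``$\rho_b$ is constant in a neighborhood of $b_0$.'' Killing the first-order class only says the derivative of the map $B\to\Rep(\pi_1,\GL_r)\git\GL_r$ vanishes at $b_0$; this does not give local constancy (consider $q\mapsto q^2$). You need the induction on the order $m$ that Proposition~\ref{formalconstancy} carries out: once the extension splits, $(V,\nabla)$ is constant modulo $q^2$; the obstruction to constancy modulo $q^3$ is \emph{again} a self-extension of $(V,\nabla)_{b_0}$ with vanishing $p$-curvatures, hence again splits; and so on. Only after formal constancy to all orders does analyticity of the monodromy family produce actual constancy on a disk---the paper streamlines this last step by first passing to a finite \'etale cover making $\rho_{b_0}$ trivial, so that each holomorphic $\rho(\gamma)\in\GL_r(\C\{q\})$ is formally, hence identically, equal to $I$. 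Your closedness paragraph inherits the same defect: you cannot rerun the splitting argument at a boundary point $b_1$ without already knowing $\rho_{b_1}$ is finite, and ``non-finiteness is open'' is not something you have established. After the finite-cover reduction, ``monodromy trivial'' is visibly a closed analytic condition, and openness plus closedness on connected $B$ finishes.

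A smaller gap: your reason for $(W,\nabla)$ having vanishing $p$-curvatures is wrong. An extension of two connections with zero $p$-curvature need \emph{not} have zero $p$-curvature---the off-diagonal block of $\psi_p$ can survive---so ``functorial and additive in exact sequences'' is insufficient, and $(W,\nabla)$ is in any case not a subquotient of $\mathcal{E}nd(V,\nabla)$. The correct argument (Claim~\ref{ctwo}) is that $(W,\nabla)$ on $X_{b_0}$ is literally the restriction of the relative connection $(V,\nabla)$ to the thickening $X_{b_0}\times\Spec k[q]/(q^2)$, viewed as a rank-$2r$ bundle on $X_{b_0}$; its $p$-curvature is then the mod-$q^2$ reduction of the relative $p$-curvature of $(V,\nabla)$, which vanishes by hypothesis.
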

One can compare this result to a theorem of Andre in \cite{Andre}, where he proves the existence of a full set of algebraic solutions to $\nabla$, assuming the finiteness of monodromy of $(V,\nabla)$ restricted to $X_b$ for a Zariski-dense set of $b 
\in B$. Our result is a generalization of Andre's theorem, where we require just one of the fibers to have finite monodromy. However, we remark that our result needs the vanishing of the $p$-curvatures, whereas Andre's doesn't. 

\subsection{Some applications}

Corollary \ref{goodred} and \ref{oneall} can be used in conjunction to deduce new cases of the $p$-curvature conjecture. As a first application, we get:

\begin{Theorem}\label{new}
Let $X_0$ denote a smooth projective variety over some field which is finitely generated over $\Q$ for which the $p$-curvature conjecture is known. Then, the $p$-curvature conjecture is true for the generic fiber of any smooth projective family of varieties which contains $X_0$ as a fiber. 
\end{Theorem}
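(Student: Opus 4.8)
The plan is to spread out, extend $(V,\nabla)$ across the fibre isomorphic to $X_0$ using the good-reduction results, invoke the known $p$-curvature conjecture for $X_0$ to see that this fibre has finite monodromy, and then propagate finiteness to the generic fibre via Theorem~\ref{oneall}. Concretely: after spreading out, I may assume $X\to B$ is a smooth projective morphism of varieties over a number field $k$, with $X_{b_0}\cong X_0$ for some point $b_0\in B$ whose residue field is finitely generated over $\Q$, and with the $p$-curvature conjecture known for $X_0$. Given $(V,\nabla)$ on the generic fibre $X_\eta$ (over $K=k(B)$) whose $p$-curvatures vanish for almost all $p$, I spread it out to a dense open $B^o\subseteq B$, obtaining a vector bundle with connection relative to $B^o$ on $X\times_B B^o$ that restricts to $(V,\nabla)$ on $X_\eta$ and still has almost all $p$-curvatures zero; this puts us in the situation of~\ref{parasetting}.

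The heart of the argument is to extend $(V,\nabla)$ across $b_0$ as an \emph{algebraic} object. If $b_0$ has codimension $\ge 2$ in $B$, I would first cut down: choose a sufficiently general smooth curve $T$ through $b_0$, defined over a number field, whose generic point $\eta_T$ maps into $B^o$, and pull $X\to B$ and $(V,\nabla)$ back to $T$. Since the complex local system attached to $(V,\nabla)$ on $X\times_B B^o$ is locally constant on the connected manifold $B^o(\C)$, its monodromy image on the fibre over $\eta$ and on the fibre over $\eta_T$ agree up to conjugacy, so it suffices to treat the pulled-back family; thus I may assume $B$ is a curve and $b_0$ a closed point. Now, because almost all $p$-curvatures vanish, the good-reduction statement Corollary~\ref{goodred} — in the sharper form of Section~\ref{sect:3}, which for vanishing $p$-curvature yields an algebraic extension (for $\dim X_0=1$ this is Theorem~\ref{goodredcurve}; for $\dim X_0\ge 2$ one reduces via the Lefschetz hyperplane theorem to hyperplane curve sections moving in the family) — extends $(V,\nabla)$ to a vector bundle with connection relative to an \'etale neighbourhood $U$ of $b_0$ on $X\times_B U$. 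Restricting to $X_0$ yields an algebraic vector bundle with connection $(V,\nabla)|_{X_0}$, whose $p$-curvatures vanish for almost all $p$, being restrictions of the vanishing $p$-curvatures of the family.

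The rest is formal. By hypothesis the $p$-curvature conjecture holds for $X_0$, so $(V,\nabla)|_{X_0}$ trivializes on a finite \'etale cover of $X_0$; in particular $(V,\nabla)_{b_0}$ has finite monodromy. Theorem~\ref{oneall} then forces $(V,\nabla)_b$ to have finite monodromy for every $b\in B$; applied over the generic point, this says the complex local system attached to $(V,\nabla)$ on $X_\eta$ has finite image, and a standard spreading-out and descent argument upgrades this to the statement that $(V,\nabla)$ trivializes on a finite \'etale cover of $X_\eta$, which is the $p$-curvature conjecture for the generic fibre.

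The step I expect to be delicate is producing the \emph{algebraic} extension of $(V,\nabla)$ across $b_0$, as opposed to the merely analytic extension in the stated Corollary~\ref{goodred}: algebraicity is precisely what makes the $p$-curvature of $(V,\nabla)|_{X_0}$ meaningful and lets us feed it into the known case for $X_0$. For $\dim X_0\ge 2$ this requires a careful Lefschetz reduction — choosing a family of curve sections through $b_0$, applying Theorem~\ref{goodredcurve} on each, and checking that the resulting extensions are mutually compatible, compatible with reduction mod $p$, and assemble to the desired extension on $X_0$. By comparison, the reduction to $b_0$ of codimension one and the final descent from $\C$ to $K$ are routine, but both rely on the local constancy of finiteness of monodromy over $B^o(\C)$.
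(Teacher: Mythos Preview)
Your proposal is essentially the paper's own proof: reduce to $B$ a curve, extend $(V,\nabla)$ across the fibre $X_0$, check the extension has vanishing $p$-curvatures, invoke the known case for $X_0$ to obtain finite monodromy there, then propagate via Theorem~\ref{oneall}.

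The one place where the paper's execution differs from your sketch is precisely the step you flag as delicate. Rather than attempting to assemble curve-section extensions directly into an algebraic extension on $X_0$, the paper proceeds in three separate moves: (i) Corollary~\ref{goodred} is used only to produce an \emph{analytic} extension $(V',\nabla')$ on $X_b$; (ii) since $X_b$ is smooth projective, Deligne's GAGA for regular singular (here, everywhere-regular) connections \cite{del} endows $(V',\nabla')$ with a canonical algebraic structure; (iii) to verify that its $p$-curvatures vanish for almost all $p$, one applies Theorem~\ref{goodredcurve} to each family of plane-section curves $C\to B$ (yielding honest algebraic extensions whose $p$-curvatures vanish on $C_b$), and then proves a local lemma over $\overline{\F}_p$: if the $p$-curvature of a flat bundle on a smooth variety $Y$ vanishes on every smooth plane section, it vanishes on $Y$ (a Bertini-and-conormal-sequence argument at a general point). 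This sidesteps any gluing or compatibility check among the curve-section extensions. The final appeal to Theorem~\ref{oneall} is likewise routed through a Lefschetz curve section $C\to B$, so that one is applying it to the algebraic family $(V_C,\nabla_C)$ rather than directly to $X\to B$.
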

Tang, in \cite[Theorem 6.1.5]{Yunqing}, proves the $p$-curvature conjecture for the Elliptic curve $E_{1728}\setminus{0}$, under the assumptions that \emph{all} $p$-curvatures vanish. This result suggests that the $p$-curvature conjecture is not equally difficult for all curves of a fixed topological type. In light of this, the significance of Theorem \ref{new} becomes apparent -- one may choose to prove the conjecture for a particularly tractable fiber and thereby deduce it for the generic fiber. Another example that illustrates this principle arises from the work of Farb-Kisin. They \cite{fk} prove Conjecture \ref{ConjA} for certain Shimura varieties (see \cite{fk} to see the exact list which they treat). When the Shimura variety is compact, has dimension $\geq 3$ and is of Hodge type, their proof goes through verbatim for hyperplane slices. Theorem \ref{new} would then apply to the generic fiber of any smooth family of varieties containing one such hyperplane slice as a fiber.

As a second application, we use Theorem \ref{curve} to deduce the $p$-curvature conjecture for rank $2$ connections on certain varieties defined over number fields. For instance, let $X = \A^2 \setminus Y$ where $Y$ is an affine quartic curve with 3 cusps. Then the $p$-curvature conjecture for rank 2 bundles on $X$ follows from Theorem \ref{curve}. For a proof and for further examples, see Section 6.

\subsection{Acknowledgments} 
We express great thanks to Mark Kisin, Andy Putman, Peter Sarnak, Yunqing Tang, and Xinwen Zhu for useful discussions and comments. We thank Ofer Gabber very much for providing examples of representations of fundamental groups which were relevant to our study. We are very thankful to H\'el\`ene Esnault for pointing out an error in an unpublished prior work of ours, and for providing helpful comments to improve the exposition of this paper.

\section{Rank two local systems on surfaces}
\label{sect:2}

The primary goal of this section is to prove Theorems \ref{top} and \ref{top0}, which will be used in the proof of Theorem \ref{curve}. We begin by establishing, in Sections \ref{sect:2.1}  and \ref{sect:2.2}, nonarchimedean and archimedean boundedness properties (corresponding to points (1) and (2) provided by the summary in Section \ref{sect:1.2}) for certain representations of surface groups into $\SL_2(\C)$. In Section \ref{sect:2.3}, we use these results to prove Theorems \ref{top} and \ref{top0}, as outlined in Section \ref{sect:1.2}. Finally, we deduce Theorem \ref{curve}, admitting in advance Theorem \ref{genuszero} (proved in later sections) in the genus zero case.

\subsection{Nonarchimedean bound}
\label{sect:2.1}
The following observation is elementary.

\begin{lemma}\label{nonarch}
Let $\Sigma$ be a topological surface of genus $g\geq0$ with $n\geq0$ punctures. If $\rho:\pi_1\Sigma\to\SL_2(\C)$ is a semisimple representation with finite monodromy along every simple loop on $\Sigma$, then $\tr\rho(a)$ is an algebraic integer for every $a\in\pi_1\Sigma$.
\end{lemma}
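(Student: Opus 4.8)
The plan is to combine two elementary observations. First, if $\gamma\in\pi_1\Sigma$ is represented by a simple loop, then by hypothesis $\rho(\gamma)$ has finite order, so its two eigenvalues are roots of unity and $\tr\rho(\gamma)$, being their sum, is an algebraic integer; the same applies to loops encircling punctures, since these are simple loops on $\Sigma$. Second, I claim that for every $a\in\pi_1\Sigma$ the number $\tr\rho(a)$ lies in the subring of $\C$ generated by the numbers $\tr\rho(\delta)$, with $\delta$ ranging over isotopy classes of simple closed curves on $\Sigma$. Granting this, $\tr\rho(a)$ is a $\Z$-polynomial expression in algebraic integers and is therefore itself an algebraic integer, which is what we want. (Semisimplicity of $\rho$ is not actually needed for this argument.)

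To establish the second claim I would argue on the level of universal trace identities, using the trace functions $\tau_a\colon\Hom(\pi_1\Sigma,\SL_2(\C))\to\C$, $\rho\mapsto\tr\rho(a)$. By classical $\SL_2$-invariant theory these generate the coordinate ring of the character scheme of $\pi_1\Sigma$, and I would invoke the skein-theoretic refinement of Bullock and of Przytycki--Sikora: the Kauffman bracket skein algebra of $\Sigma$ specialized at $A=-1$ surjects onto this coordinate ring, a knot representing $a$ mapping to $-\tau_a$, and it is spanned over $\Z$ by isotopy classes of disjoint unions of simple closed curves. Hence each $\tau_a$ is, up to sign, a $\Z$-linear combination of products $\prod_i\tau_{\delta_i}$ with the $\delta_i$ simple closed curves on $\Sigma$; evaluating at $\rho$ gives the claim.

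The one substantive input is exactly this last fact — that simple closed curves generate the trace ring over $\Z$ — and citing or reproving it is where the real content sits; everything else is formal. When $\Sigma$ has punctures, so that $\pi_1\Sigma$ is free, one can avoid skein algebras altogether: choosing (as one may) a free generating set consisting of simple closed curves whose pairwise products are again freely homotopic to simple closed curves, Horowitz's theorem writes any trace as a $\Z$-polynomial in the traces of the products of at most three of the generators, and $\tr\rho(\gamma_i\gamma_j\gamma_k)$ is a root of a monic quadratic whose coefficients are $\Z$-polynomials in the traces of $\gamma_i,\gamma_j,\gamma_k$ and of their pairwise products, so the triple terms are algebraic integers once the single and pairwise ones are. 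Alternatively there is a soft non-archimedean proof: if $\tr\rho(a)$ failed to be an algebraic integer for some $a$, then over a suitable completion of the (finitely generated) field generated by all $\tr\rho(b)$ the representation would act on the Bruhat--Tits tree of $\SL_2$ with no global fixed point, hence by Morgan--Shalen give a nonzero measured lamination on $\Sigma$ whose geometric intersection function is the translation-length function of $\rho$; but then any simple closed curve meeting that lamination would have $\rho$-image of infinite order, contradicting the hypothesis.
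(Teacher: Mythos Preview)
Your proposal is correct, and your first approach is morally the same as the paper's---but where you outsource the ``one substantive input'' (that traces of simple closed curves generate the trace ring over $\Z$) to the Bullock/Przytycki--Sikora skein-algebra theorem, the paper simply proves it on the spot. The paper's argument is a short induction on the minimal number $m(a)$ of self-intersections of an immersed representative of $a$: if $m(a)\geq 1$, move a crossing to the basepoint so that $a=bc$, observe that $b$, $c$, and $bc^{-1}$ all have strictly fewer self-intersections, and apply the $\SL_2$ trace identity $\tr\rho(bc)=\tr\rho(b)\tr\rho(c)-\tr\rho(bc^{-1})$. This is of course exactly the classical-limit skein relation, so your citation and the paper's induction are the same mechanism; the paper's version is just self-contained and avoids importing the full skein-module machinery for a one-paragraph lemma.

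Your two alternatives are genuinely different routes. The Horowitz/Fricke argument for the punctured case is fine, though you should be explicit that the ``optimal'' generating sets one writes down really do have all pairwise products (in cyclic order, which suffices since $\tr(g_jg_i)=\tr(g_ig_j)$) representable by simple loops. The Morgan--Shalen/tree argument is the most conceptually distinct: it trades the explicit algebraic identity for geometry, and has the virtue of generalizing more readily (e.g.\ to higher rank via buildings), at the cost of invoking a substantial piece of machinery for what is, in $\SL_2$, an elementary fact.
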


\begin{proof}
Fix a base point on $\Sigma$, and let $a:S^1\to\Sigma$ be a based loop. Up to homotopy, we may assume $a$ is an immersion with a minimum possible number (denoted $m(a)$) of self-intersection points in its image, each intersection point being required to be a simple normal crossing. If $m(a)=0$, i.e.~$a$ is a simple loop, then $\tr\rho(a)=\zeta+\zeta^{-1}$ for some root of unity $\zeta$ by our hypothesis on $\rho$, so $\rho(a)$ is an algebraic integer.

So suppose $m(a)\geq1$. We shall proceed by induction on $m(a)$ as follows. Up to homotopy, we may assume that one of the $m(a)$ self-intersection points of $a$ is the base point. The loop $a$ is thus a concatenation $a=bc$ of two uniquely determined nontrivial based loops $b$ and $c$ (namely, travelling along $a$, we define $b$ as the loop of first return to the base point, and $c$ is the remainder). We observe that $m(b)$, $m(c)$, and $m(bc^{-1})$ are all strictly less than $m(a)$. Thus, $\tr\rho(b)$, $\tr\rho(c)$, and $\tr\rho(bc^{-1})$ are all algebraic integers by our inductive hypothesis. But
$$\tr\rho(a)=\tr\rho(bc)=\tr\rho(b)\cdot\tr\rho(c)-\tr\rho(bc^{-1})$$
since $\tr(xy)+\tr(xy^{-1})=\tr(x)\tr(y)$ for every $x,y\in\SL_2(\C)$. This shows that $\tr\rho(a)$ is also an algebraic integer, completing the inductive step and the proof. 
\end{proof}

\subsection{Archimedean bound} \label{sect:2.2}
Let $\Sigma$ be a surface with finitely many punctures. Our goal in this subsection is to prove Lemmas \ref{arch} (for positive genus) and \ref{arch0} (for genus zero) below, which provide sufficient conditions for a representation $\pi_1\Sigma\to\SL_2(\C)$ to be unitarizble (i.e.~have image conjugate to a subgroup of $\SU(2)$), in terms of simple loops or pairs of pants on $\Sigma$.

For the benefit of the reader, we briefly summarize our approach. The essence is to show that, if a semisimple representation $\rho:\pi_1\Sigma\to\SL_2(\C)$ becomes unitarizable upon restriction to every one-holed torus (if $\Sigma$ has positive genus) or every pair of pants (if $\Sigma$ has genus zero) on $\Sigma$, then $\rho$ is unitarizable. The hypothesis implies that $\rho$ has real character, and therefore has image conjugate to a subgroup of $\SU(2)$ or of $\SL_2(\R)$ (see \cite{ms}); thus, it remains to show that in the latter case the image of $\rho$ is conjugate to a subgroup of $\SO(2)$. For this, it suffices to show that the isometric action of $\pi_1\Sigma$ on the hyperbolic plane induced by $\rho$ has a fixed point. We achieve this by considering points fixed by various subsurfaces of $\Sigma$. Below, we recall some elementary facts about $\SL_2(\C)$, and reproduce some topological notions considered in \cite{whang} to conveniently keep track of subsurfaces of $\Sigma$.

\begin{definition}
An element $g\in\SL_2(\C)$ is said to be:
\begin{itemize}
    \item \emph{parabolic} if $\tr(g)=\pm2$ and moreover \emph{central} if $g=\pm1$;
    \item \emph{elliptic} if $\tr(g)\in(-2,2)$; and
    \item \emph{loxodromic} if $\tr(g)\notin[-2,2]$.
\end{itemize}    
\end{definition}
The above classification arises from well-known study of the transitive isometric action of $\SL_2(\C)$ on the hyperbolic three-space. This action restricts to an isometric action of $\SL_2(\R)$ on a hyperbolic plane, which can be identified with the usual action of $\SL_2(\R)$ on the Poincar\'e upper half plane $$\Hb^2=\{z\in\C:\textup{Im}\,z>0\}$$
by M\"obius transformations. The stabilizer of $i\in\Hb^2$ is the special orthogonal group $\SO(2)$. A non-central $g\in\SL_2(\R)$ is elliptic precisely when it is conjugate in $\SL_2(\R)$ to an element of $\SO(2)$, and precisely when it has a unique fixed point in $\Hb^2$.

Let $\Sigma$ be a surface of genus $g$ with $n$ punctures. We fix a base point in $\Sigma$. Recall a standard presentation of the fundamental group
$$\pi_1\Sigma=\langle a_1,d_1,\cdots,a_g,d_g,c_1,\cdots,c_n|[a_1,d_1]\cdots[a_g,d_g]c_1\cdots c_n\rangle.$$
We can choose (the based loops representing) the generators so that the sequence of loops  $(a_1,d_1,\cdots,a_g,d_g,c_1,\cdots,c_n)$ have the property that each loop is simple and any two distinct loops intersect exactly once and at the base point. For $i=1,\cdots,g$, let $b_i$ be the based simple loop parametrizing the curve underlying $d_i$ with the opposite orientation. Note that $(a_1,b_1,\cdots,a_g,b_g,c_1,\cdots,c_n)$ satisfies the following:
\begin{enumerate}
	\item[(1)]each loop in the sequence is simple,	
	\item[(2)]any two distinct loops intersect exactly once (at the base point), and
	\item[(3)]every product of distinct elements in the sequence preserving the cyclic ordering can be represented by a simple loop in $\Sigma$.
\end{enumerate}
Some examples of products alluded to in (3) are $a_1b_g$, $a_1a_2b_2b_g$, and $b_gc_na_1$.

\begin{definition}
An \emph{optimal sequence of generators} for $\pi_1\Sigma$ is a sequence of loops $(a_1,b_1,\cdots,a_g,b_g,c_1,\cdots,c_n)$ obtained from a standard presentation of $\pi_1\Sigma$ as above.
\end{definition}

See Figure \ref{fig1} for an illustration of optimal generators for $(g,n)=(2,1)$.

\begin{figure}[ht]
    \centering
    \includegraphics{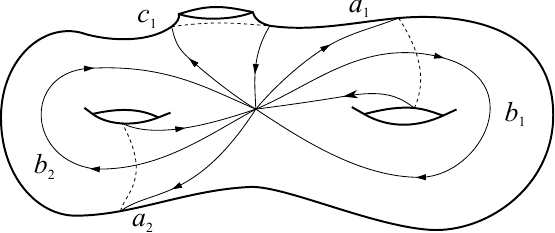}
    \caption{Optimal generators for $(g,n)=(2,1)$}
    \label{fig1}
\end{figure}

\begin{definition}
A pair of loops $(\ell_1,\ell_2)$ on $\Sigma$ is in $(1,1)$-\emph{position} (resp.~\emph{$(0,3)$-position}) if it is homotopic termwise to a pair $(\ell_1',\ell_2')$ of simple loops intersecting exactly once at the base point, with the following property: there is a closed tubular neighborhood $\Sigma'\subset\Sigma$ of the union of images of $\ell_1'$ and $\ell_2'$ in $\Sigma$ which is a subsurface of genus one with one boundary curve (resp.~of genus zero with three boundary curves), and $(\ell_1',\ell_2')$ completes to an optimal sequence of generators for $\pi_1\Sigma'$.
\end{definition}

 For example, suppose $S=(a_1,b_1,\cdots,a_g,b_g,c_1,\cdots,c_n)$ is an optimal sequence of generators for $\pi_1\Sigma$. We note the following.
 \begin{itemize}
     \item If $g\geq1$, then $(a_1,b_1\ell)$ is in $(1,1)$-position for any loop $\ell\neq a_1,b_1$ in $S$.
     \item If $g=0$, then $(c_1,c_i)$ is in $(0,3)$-position for any $i=2,\cdots,n$.
 \end{itemize}
The above definitions and observations facilitate the proofs of our lemmas below.

\begin{lemma}\label{arch}
Let $\Sigma$ be a surface of positive genus $g\geq1$ with $n\geq0$ punctures. If $\rho:\pi_1\Sigma\to\SL_2(\C)$ is a semisimple representation with elliptic or central monodromy along every simple loop of $\Sigma$, then the $\rho$ is conjugate to a representation into $\SU(2)$.
\end{lemma}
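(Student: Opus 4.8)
The plan is to first reduce to the case where $\rho$ has real character, then invoke the dichotomy of \cite{ms} to conclude that the image is conjugate into $\SU(2)$ or into $\SL_2(\R)$, and finally rule out the latter by producing a global fixed point for the induced isometric action on $\Hb^2$. For the reduction to real character: by hypothesis every simple loop $\ell$ has elliptic or central monodromy, so $\tr\rho(\ell)\in[-2,2]\subset\R$. Fix an optimal sequence of generators $S=(a_1,b_1,\dots,a_g,b_g,c_1,\dots,c_n)$. Since every product of consecutive distinct elements of $S$ is represented by a simple loop, all such products have real trace; combined with the trace identity $\tr(xy)=\tr(x)\tr(y)-\tr(xy^{-1})$ and an induction on word length (paralleling Lemma \ref{nonarch}), one shows $\tr\rho(a)\in\R$ for all $a\in\pi_1\Sigma$. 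By \cite{ms}, a semisimple $\SL_2(\C)$-representation with real character is conjugate either into $\SU(2)$ or into $\SL_2(\R)$; in the former case we are done, so assume $\rho(\pi_1\Sigma)\subset\SL_2(\R)$.

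It remains to show that in the $\SL_2(\R)$ case the image is actually conjugate into $\SO(2)$, equivalently that the isometric action of $\pi_1\Sigma$ on $\Hb^2$ induced by $\rho$ has a fixed point. The key point is that every simple loop acts by an elliptic or central (hence trace-$\pm2$ but with a fixed point) isometry, so each has a nonempty fixed-point set in $\Hb^2$: a single point if elliptic, all of $\Hb^2$ if central. The strategy is to build up from one-holed tori. Using the observation that for $g\geq1$ the pair $(a_1,b_1\ell)$ is in $(1,1)$-position for every $\ell\neq a_1,b_1$ in $S$, we consider the subgroup $\langle a_1,b_1\ell\rangle$, which is the fundamental group of a one-holed torus; all of its simple loops are simple on $\Sigma$, so the restriction of $\rho$ there has elliptic-or-central monodromy along every simple loop. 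One then argues — this is where the topology does the work — that such a representation of a one-holed torus group into $\SL_2(\R)$ has a global fixed point in $\Hb^2$; intuitively, two elliptics whose product along the boundary is again elliptic-or-central cannot generate a group acting without a fixed point, since an irreducible non-elementary subgroup of $\SL_2(\R)$ always contains a loxodromic element, which would violate the hypothesis applied to suitable simple loops. Having fixed points for each of the pieces $\langle a_1,b_1\ell\rangle$, and noting that these pieces collectively generate $\pi_1\Sigma$ and pairwise overlap in $\langle a_1,b_1\rangle$ (which itself has a fixed point), one concludes that all the fixed-point sets share a common point — either because the fixed point of $\langle a_1,b_1\rangle$ is forced to be fixed by everything, or because a chain of overlapping convex fixed-sets in $\Hb^2$ has the finite-intersection property. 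That common fixed point conjugates the whole image into $\SO(2)\subset\SU(2)$.

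The main obstacle I expect is the step asserting that a subgroup of $\SL_2(\R)$ all of whose ``simple'' elements (relative to the surface structure) are elliptic or central must fix a point, and then threading these local fixed points into a single global one. The delicate part is that having a fixed point is not a priori compatible under products: one must genuinely use the $(1,1)$-position structure and the fact that enough words are realized by simple loops to exclude any non-elementary (and hence loxodromic-containing) behavior, and to pin down that the various elliptic fixed points coincide rather than merely each existing. Handling the purely parabolic/central degenerate configurations separately, and the reducible case (where $\rho$ is a sum of characters and the claim is comparatively easy), should round out the argument.
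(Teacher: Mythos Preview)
Your proposal is correct and follows essentially the same route as the paper: real character via the trace identity, the $\SU(2)$/$\SL_2(\R)$ dichotomy from \cite{ms}, and then a global fixed point in $\Hb^2$ built from one-holed tori in $(1,1)$-position. Two places where the paper sharpens what you sketched: first, the $(g,n)=(1,1)$ base case is dispatched by citing Goldman's description of the $\SU(2)$-locus in the character variety of the once-punctured torus, rather than by an ad hoc ``non-elementary implies loxodromic'' argument. Second, for the threading step you flagged, the paper first arranges that $\rho(a_1)$ is non-central (disposing separately of the easy case where every nonseparating simple loop has central image, whence $\rho(\pi_1\Sigma)\subset\{\pm1\}$); then $a_1$ has a \emph{unique} fixed point $q\in\Hb^2$, and since each pair $(a_1,b_1)$ and $(a_1,b_1\ell)$ is in $(1,1)$-position, the base case forces $b_1$ and then each $b_1\ell$---hence each $\ell$---to fix $q$. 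Note that your claim that the subgroups $\langle a_1,b_1\ell\rangle$ pairwise overlap in $\langle a_1,b_1\rangle$ is not literally correct (they need not contain $b_1$); the actual overlap is only in $\langle a_1\rangle$, and it is the uniqueness of the fixed point of the non-central $a_1$ that makes the argument go through.
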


\begin{proof}
In the case $(g,n)=(1,1)$, this result can be seen from the characterization of the locus of unitary characters in the $\SL_2(\C)$-character variety of a once-punctured torus (see e.g.~\cite[Section 5]{goldman}). More precisely, if $(a_1,b_1,c_1)$ is an optimal sequence of generators for $\pi_1\Sigma$, and $\rho:\pi_1\Sigma\to\SL_2(\C)$ is a semisimple representation with
$$\tr\rho(a_1),\tr\rho(b_1),\tr\rho(a_1b_1),\tr\rho(c_1)\in[-2,2],$$
then the character of $\rho$ is the character of a representation $\pi_1\Sigma\to\SU(2)$, and the claim follows. We also remark that, if $\Sigma$ is a once-punctured torus, then any representation $\rho:\pi_1\Sigma\to\SL_2(\C)$ with elliptic or central monodromy along every simple loop must automatically be semisimple.

In general, let $\rho:\pi_1\Sigma\to\SL_2(\C)$ be given as above. Since $\tr\rho(a)\in[-2,2]\subset\R$ for each simple loop $a$ on $\Sigma$, it follows by arguing as in the proof of Lemma \ref{nonarch} that $\tr\rho(a)\in\R$ for every $a\in\pi_1\Sigma$. Since $\rho$ is semisimple, this implies that the image of $\rho$ is conjugate to a subgroup of $\SU(2)$ or to a subgroup of $\SL_2(\R)$ (see e.g.~\cite[Proposition III.1.1.]{ms}). If the former occurs, then we are done. So let us assume that $\rho$ has image in $\SL_2(\R)$.

The representation $\rho$ induces an action of $\pi_1\Sigma$ on $\Hb^2$, and it suffices for us to show that this action has a fixed point. By case $(g,n)=(1,1)$, we know that any pair of loops $(\ell_1,\ell_2)$ in $(1,1)$-position on $\Sigma$ must have a common fixed point on $\Hb^2$. Now, we may assume there is a simple loop $a_1$ on $\Sigma$ such that:
\begin{itemize}
    \item the image of $a_1$ has connected complement in $\Sigma$, and
    \item $\rho(a_1)$ is non-central (and therefore elliptic).
\end{itemize}
Indeed, otherwise, the image of $\rho$ can be easily seen to be contained in $\{\pm1\}$, and we are done. Given such a loop $a_1$, let us complete it to an optimal sequence $S=(a_1,b_1,\cdots,a_g,b_g,c_1,\cdots,c_n)$ of generators for $\pi_1\Sigma$. Let $q\in\Hb^2$ denote the unique fixed point of $a_1$. Since the pair $(a_1,b_1)$ is in $(1,1)$-position, it follows that $a_1$ and $b_1$ have a common fixed point in $\Hb^2$, namely $q$. For any other loop $\ell\neq a_1,b_1$ in $S$, the pair $(a_1,b_1\ell)$ is in $(1,1)$-position; thus, $\ell$ also fixes $q$. Since $S$ generates $\pi_1\Sigma$, it follows that $\pi_1\Sigma$ fixes $q$, as desired.
\end{proof}

\begin{lemma}
\label{arch0}
Let $\Sigma$ be a topological surface of genus zero with $n\geq0$ punctures. If $\rho:\pi_1\Sigma\to\SL_2(\C)$ is a semisimple representation whose restriction to any pair of pants is conjugate to a representation into $\SU(2)$, then $\rho$ itself is conjugate to a representation into $\SU(2)$.
\end{lemma}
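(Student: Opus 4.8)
The plan is to follow the template of the proof of Lemma~\ref{arch}, with pairs of pants and $(0,3)$-positions taking over the roles played there by one-holed tori and $(1,1)$-positions; the base case, which for Lemma~\ref{arch} came from a character-variety computation on the once-punctured torus, is here handed to us by the hypothesis. We may assume $n\ge 3$ (for $n\le 1$ the group $\pi_1\Sigma$ is trivial). As in the outline of \S\ref{sect:1.2}, the goal is to show $\rho$ has real character and then is unitarizable by producing a fixed point for the induced action on $\Hb^2$ when the image lands in $\SL_2(\R)$.

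First I would show that, for every simple closed curve $\gamma$ on $\Sigma$, the element $\rho(\gamma)$ is conjugate in $\SL_2(\C)$ to an element of $\SU(2)$; in particular $\tr\rho(\gamma)\in[-2,2]$. Since $\Sigma$ has genus zero, $\gamma$ is either null-homotopic (so $\tr\rho(\gamma)=2$) or essential, in which case it separates the $n$ punctures into two nonempty sets, one of which, say $T$, has $|T|\ge 2$ because $n\ge 3$. On the $T$-side one finds an embedded pair of pants $P\subset\Sigma$ having $\gamma$ as one boundary curve, its other two boundary curves being a small loop around a single puncture of $T$ and a curve enclosing the remaining punctures of $T$. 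By hypothesis $\rho$ restricted to $\pi_1 P$ is conjugate into $\SU(2)$, and $\rho(\gamma)$ is conjugate to the $\rho$-image of a boundary class of $P$, whence the claim. Then, arguing exactly as in the inductive proof of Lemma~\ref{nonarch} (induction on the self-intersection number, using $\tr(xy)+\tr(xy^{-1})=\tr(x)\tr(y)$), it follows that $\tr\rho(a)\in\R$ for every $a\in\pi_1\Sigma$.

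Next, since $\rho$ is semisimple with real character, its image is conjugate in $\SL_2(\C)$ into $\SU(2)$ or into $\SL_2(\R)$ by \cite[Proposition III.1.1]{ms}; in the first case we are done, so assume the image lies in $\SL_2(\R)$, acting isometrically on $\Hb^2$, and it suffices to find a global fixed point. The crucial observation is that if $(\ell_1,\ell_2)$ is any pair of loops in $(0,3)$-position on $\Sigma$, then the subgroup $\rho(\langle\ell_1,\ell_2\rangle)$ has a fixed point in $\Hb^2$: by hypothesis it is conjugate into the compact group $\SU(2)$, hence bounded in $\SL_2(\C)$, and a bounded subgroup of $\SL_2(\R)$ has bounded orbits in $\Hb^2$ and so fixes their circumcenter. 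Now, if every $\rho(c_i)$ is central then $\rho(\pi_1\Sigma)\subset\{\pm1\}$ and we are done; otherwise, after cyclically reordering an optimal sequence $(c_1,\dots,c_n)$ of generators we may assume $\rho(c_1)$ is non-central, hence (being conjugate into $\SU(2)$ by Step~1) elliptic, with a unique fixed point $q\in\Hb^2$. For each $i=2,\dots,n$ the pair $(c_1,c_i)$ is in $(0,3)$-position, so $\rho(\langle c_1,c_i\rangle)$ fixes some point of $\Hb^2$; since $\rho(c_1)$ fixes only $q$, that point is $q$, and hence $\rho(c_i)q=q$. As $c_1,\dots,c_n$ generate $\pi_1\Sigma$, the whole image fixes $q$, so it lies in the stabilizer of $q$, a conjugate of $\SO(2)\subset\SU(2)$.

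I expect the step requiring the most care to be the first one: translating the pair-of-pants hypothesis into a bound on $\tr\rho(\gamma)$ for \emph{every} simple closed curve $\gamma$ on the punctured sphere, which amounts to the surface-topology fact that every such curve (essential or peripheral) occurs as a boundary component of an embedded pair of pants. Once the character is known to be real, the remaining steps parallel the positive-genus argument of Lemma~\ref{arch} closely, the one extra input being that a subgroup of $\SL_2(\R)$ that is unitarizable over $\C$ is automatically bounded and hence elliptic on generators.
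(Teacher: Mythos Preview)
Your argument is correct and follows essentially the same route as the paper's proof: real character via the simple-loop trace bound and the induction of Lemma~\ref{nonarch}, the $\SU(2)$/$\SL_2(\R)$ dichotomy from \cite{ms}, and then in the $\SL_2(\R)$ case a fixed-point argument using that each pair $(c_1,c_i)$ is in $(0,3)$-position and generates a group with compact image. Your write-up is in fact more explicit than the paper's in justifying why every essential simple closed curve on a punctured sphere is a boundary curve of some embedded pair of pants, a point the paper passes over in one clause.
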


\begin{proof}
Let $\rho:\pi_1\Sigma\to\SL_2(\C)$ be given as above. Since $\tr\rho(a)\in[-2,2]\subset\R$ for each simple loop $a$ on $\Sigma$, it follows by arguing as in the proof of Lemma \ref{nonarch} that $\tr\rho(a)\in\R$ for every $a\in\pi_1\Sigma$. Since $\rho$ is semisimple, this implies that the image of $\rho$ is conjugate to a subgroup of $\SU(2)$ or to a subgroup of $\SL_2(\R)$ (see e.g.~\cite[Proposition III.1.1.]{ms}). If the former occurs, then we are done. So let us assume that $\rho$ has image in $\SL_2(\R)$.

The representation $\rho$ induces an action of $\pi_1\Sigma$ on $\Hb^2$, and it suffices to show that this action has a fixed point. Let $(c_1,\cdots,c_n)$ be an optimal sequence of generators for $\pi_1\Sigma$. Note that $\rho(c_i)$ is elliptic or central for each $i=1,\cdots,n$ by our hypothesis on $\rho$. We may assume without loss of generality that $\rho(c_1)$ is not central, and let $q\in\Hb^2$ be the unique fixed point of $\rho(c_1)$. For each $i=2,\cdots,n$, the pair $(c_1,c_i)$ is in $(0,3)$-position. The restriction of $\rho$ to $\Sigma_i$ has image in a compact subgroup of $\SL_2(\R)$ by our hypotheses, and hence must have a (unique) fixed point, namely $q\in\Hb^2$. It follows that $c_i$ fixes $q$ for every $i=1,\cdots,n$, and hence $\pi_1\Sigma$ fixes $q$.
\end{proof}

\subsection{Proofs of Theorems \ref{curve}, \ref{top}, and \ref{top0}}\label{sect:2.3}
With the results of Sections \ref{sect:2.1} and \ref{sect:2.2} in hand, we now restate and prove Theorems \ref{top} and \ref{top0}, and deduce from them Theorem \ref{curve} (with the assumption of Theorem \ref{genuszero} in the case $g=0$).

\begin{thm}{\ref{top}}
Let $\Sigma$ be a topological surface of genus $g\geq1$ with $n\geq0$ punctures. If a semisimple representation $\rho:\pi_1\Sigma\to\GL_2(\C)$ of its fundamental group has finite monodromy along every simple loop on $\Sigma$, then the image of $\rho$ is finite.
\end{thm}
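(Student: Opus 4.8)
The plan is to reduce from $\GL_2(\C)$ to $\SL_2(\C)$ and then apply the two boundedness results established in the preceding subsections, following the outline in Section \ref{sect:1.2}.

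\medskip

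\noindent\textbf{Reduction to $\SL_2$.} First I would pass from $\rho:\pi_1\Sigma\to\GL_2(\C)$ to a representation into $\SL_2(\C)$. Composing $\rho$ with the determinant gives a homomorphism $\det\rho:\pi_1\Sigma\to\C^\times$, which factors through the abelianization $H_1(\Sigma;\Z)$. Because every simple loop $a$ has $\rho(a)$ of finite order, $\det\rho(a)$ is a root of unity for each simple loop; since simple loops generate $H_1(\Sigma;\Z)$, the image of $\det\rho$ is a finitely generated torsion subgroup of $\C^\times$, hence finite. Thus after passing to a finite-index subgroup $\Gamma\le\pi_1\Sigma$ (the kernel of $\det\rho$ composed with reduction, say) — which is again the fundamental group of a finite cover $\Sigma'$ of $\Sigma$, of positive genus, and on which simple loops of $\Sigma'$ still map to finite-order elements since they map to simple loops of $\Sigma$ up to taking powers — we may assume $\rho$ lands in $\SL_2(\C)$. (Alternatively, twist $\rho$ by a character to land in $\SL_2(\C)$ after a finite cover; finiteness of the image upstairs implies finiteness downstairs.) Here semisimplicity is preserved under restriction to finite-index subgroups.

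\medskip

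\noindent\textbf{Applying the two bounds.} Now let $\rho:\pi_1\Sigma\to\SL_2(\C)$ be semisimple with $\rho(a)$ of finite order for every simple loop $a$; in particular $\tr\rho(a)=\zeta+\zeta^{-1}$ for a root of unity $\zeta$, so $\tr\rho(a)\in[-2,2]$ and $\rho(a)$ is elliptic or central. By Lemma \ref{nonarch}, $\tr\rho(a)$ is an algebraic integer for every $a\in\pi_1\Sigma$; and by Lemma \ref{arch} (using $g\ge1$), $\rho$ is conjugate into $\SU(2)$. After conjugating we may assume the image lies in $\SU(2)\cap\SL_2(\overline\Q)$, using that all traces are algebraic integers. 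For any $\sigma\in\Gal(\overline\Q/\Q)$, the conjugate representation $\sigma\circ\rho$ still has finite monodromy along every simple loop (finite order is Galois-stable), hence by the same two lemmas its traces are again algebraic integers bounded by $2$ in absolute value. Therefore, for each $a\in\pi_1\Sigma$, every Galois conjugate of the eigenvalues of $\rho(a)$ is an algebraic integer of absolute value $1$, so by Kronecker's theorem these eigenvalues are roots of unity. Since $\rho(a)$ is semisimple (its image lies in $\SU(2)$), $\rho(a)$ has finite order.

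\medskip

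\noindent\textbf{Conclusion.} Thus $\rho$ is a representation of the finitely generated group $\pi_1\Sigma$ into $\GL_2(\overline\Q)$ in which every element has finite order. By Burnside's theorem (or Selberg's lemma applied to the finitely generated linear group $\rho(\pi_1\Sigma)$), a finitely generated linear torsion group is finite, so the image of $\rho$ is finite, as claimed. Combined with the $\SL_2$-reduction, this proves Theorem \ref{top} for the original $\GL_2$-representation.

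\medskip

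\noindent\textbf{Main obstacle.} The substantive content is entirely in Lemma \ref{arch} (the archimedean/unitarizability bound), whose proof is driven by the positive-genus hypothesis through the existence of simple loops with connected complement and the $(1,1)$-position machinery; the rest is the bookkeeping of the $\SL_2$ reduction and the Kronecker/Selberg endgame, which is routine.
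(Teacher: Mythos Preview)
Your overall strategy matches the paper's: reduce to $\SL_2(\C)$, apply Lemmas \ref{nonarch} and \ref{arch}, pass to $\SL_2(\bar\Q)$, use Galois-stability of the hypotheses together with Kronecker and Selberg. The second and third parts are essentially identical to the paper and are correct.

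The gap is in your reduction to $\SL_2$. Passing to the finite cover $\Sigma'\to\Sigma$ corresponding to $\ker(\det\rho)$ does \emph{not} preserve the hypothesis: a simple closed curve on $\Sigma'$ need not project to a power of a simple closed curve on $\Sigma$. Finite covers of surfaces routinely contain simple curves whose images downstairs have self-intersections; your assertion that ``simple loops of $\Sigma'$ \dots\ map to simple loops of $\Sigma$ up to taking powers'' is false in general. So you cannot conclude that $\rho|_{\pi_1\Sigma'}$ has finite monodromy along every simple loop of $\Sigma'$, and hence cannot invoke Lemma \ref{arch} on $\Sigma'$. Your parenthetical alternative (twisting by a character) can in fact be made to work \emph{without} any cover---since $H_1(\Sigma;\Z)$ is free abelian, $\det\rho$ admits a square root $\chi$ with finite image, and $\rho\otimes\chi^{-1}$ lands in $\SL_2(\C)$ with the simple-loop hypothesis intact---but as you phrase it (``after a finite cover'') it inherits the same problem.

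The paper avoids covers entirely: it uses the isogeny $\GL_2\to\PGL_2\times\GL_1$, adds a puncture $\Sigma'=\Sigma\setminus\{p\}$ so that $\pi_1\Sigma'$ is free, and then chooses a semisimple lift of $\textup{pr}\circ\rho\circ i:\pi_1\Sigma'\to\PGL_2(\C)$ to $\SL_2(\C)$. The point is that the inclusion $\Sigma'\hookrightarrow\Sigma$ (unlike a covering map) takes simple loops to simple loops, so the hypothesis survives; and any preimage in $\SL_2(\C)$ of a finite-order element of $\PGL_2(\C)$ again has finite order.
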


\begin{thm}{\ref{top0}}
Let $\Sigma$ be a topological surface of genus zero with $n\geq0$ punctures. If a semisimple representation $\rho:\pi_1\Sigma\to\GL_2(\C)$ has finite restriction to every pair of pants on $\Sigma$, then the image of $\rho$ is finite.
\end{thm}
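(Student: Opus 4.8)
\emph{Strategy.} I will deduce Theorems~\ref{top} and~\ref{top0} in parallel, by combining the nonarchimedean bound (Lemma~\ref{nonarch}) with the archimedean bounds (Lemmas~\ref{arch} and~\ref{arch0}) and a Galois-descent argument, exactly along the lines sketched in \S\ref{sect:1.2}. \emph{Step 1: reduction to $\SL_2$.} First note that in both theorems $\rho$ has finite monodromy along every simple loop: this is the hypothesis of Theorem~\ref{top}, while for Theorem~\ref{top0} it follows from the fact that on a genus-zero surface every simple closed curve is freely homotopic to a trivial loop or to a boundary curve of an embedded pair of pants (for $n\le1$ the fundamental group is trivial, so one may assume $n\ge3$), on which $\rho$ is finite by hypothesis. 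Consequently $\det\rho$, which factors through the free abelian group $H_1(\Sigma;\Z)$ generated by simple loops, has finite image; since $H_1(\Sigma;\Z)$ is free abelian it admits a character $\chi:\pi_1\Sigma\to\C^\times$ with $\chi^2=\det\rho$, again of finite image. Replacing $\rho$ by $\rho\otimes\chi^{-1}$ preserves semisimplicity and all the finiteness hypotheses (since $\chi$ has finite image), reduces us to the case $\rho:\pi_1\Sigma\to\SL_2(\C)$, and the conclusion for $\rho\otimes\chi^{-1}$ gives the conclusion for $\rho=(\rho\otimes\chi^{-1})\otimes\chi$.

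\emph{Step 2: the two bounds.} For a simple loop $a$, finiteness of the order of $\rho(a)$ forces $\tr\rho(a)\in[-2,2]$ and rules out non-central parabolics, so $\rho(a)$ is elliptic or central; Lemma~\ref{nonarch} then yields $\tr\rho(a)\in\bar\Z$ for every $a\in\pi_1\Sigma$. On the archimedean side, Theorem~\ref{top} places us exactly in the situation of Lemma~\ref{arch}, while for Theorem~\ref{top0} any finite subgroup of $\SL_2(\C)$ is conjugate into $\SU(2)$, so the hypothesis supplies precisely the input of Lemma~\ref{arch0}; either way $\rho$ is conjugate into $\SU(2)$. Thus we have secured: (1) $\tr\rho(a)\in\bar\Z$ for all $a\in\pi_1\Sigma$, and (2) $\rho$ is unitarizable, so in particular every $\rho(a)$ is diagonalizable with eigenvalues on the unit circle.

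\emph{Step 3: Galois descent, Kronecker, Selberg.} By (1) we may assume $\rho$ has image in $\SL_2(\bar\Q)$ (the $\bar\Q$-valued character determines the semisimple $\rho$ up to conjugacy, and the potential quaternionic obstruction vanishes over $\bar\Q$). Fix $a\in\pi_1\Sigma$ and an eigenvalue $\lambda\in\bar\Q$ of $\rho(a)$; it is an algebraic integer, being a root of the monic polynomial $t^2-\tr\rho(a)\,t+1$ with algebraic-integer coefficients. For $\sigma\in\Gal(\bar\Q/\Q)$ the conjugate representation $\rho^\sigma$ is again semisimple and still satisfies the hypotheses of Theorem~\ref{top} or~\ref{top0} (having finite order, or finite image, is $\sigma$-invariant), so by Step~2 applied to $\rho^\sigma$ its image is conjugate in $\GL_2(\C)$ into $\SU(2)$; hence $\sigma(\lambda)$, an eigenvalue of $\rho^\sigma(a)$, has absolute value $1$. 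As this holds for all $\sigma$, Kronecker's theorem shows $\lambda$ is a root of unity; both eigenvalues of $\rho(a)$ are roots of unity, and since $\rho(a)$ is diagonalizable it has finite order. Therefore every element of the finitely generated linear group $\Gamma:=\rho(\pi_1\Sigma)\subset\GL_2(\C)$ is torsion, so by Selberg's lemma the finite-index torsion-free subgroup it provides is trivial, whence $\Gamma$ is finite. Untwisting by $\chi$ finishes the proof of Theorems~\ref{top} and~\ref{top0}; Theorem~\ref{curve} then follows by feeding in the finite-monodromy-along-simple-loops theorem of \cite{ananth} for $g\ge1$, and Theorem~\ref{genuszero} for $g=0$.

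\emph{Expected main obstacle.} With Lemmas~\ref{nonarch}--\ref{arch0} already in hand, the remaining difficulty is largely bookkeeping: checking that twisting by $\chi$, passing to $\SL_2(\bar\Q)$, and applying $\Gal(\bar\Q/\Q)$ all preserve semisimplicity together with the (simple-loop or pair-of-pants) finiteness hypotheses, and verifying the planar topology input needed for Theorem~\ref{top0}. The genuinely substantive content lies in the archimedean bounds themselves—resting on the structure of unitary loci in $\SL_2(\C)$-character varieties and on the ``optimal generator'' combinatorics of subsurfaces—but these have been established above.
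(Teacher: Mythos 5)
Your proof is correct and, with Lemmas~\ref{nonarch} and \ref{arch0} in hand, follows exactly the paper's template: integrality of traces, unitarizability, Galois descent, Kronecker's theorem, and Selberg's lemma. The one place you genuinely diverge is the reduction from $\GL_2$ to $\SL_2$: the paper composes $\rho$ with $(\mathrm{pr},\det):\GL_2\to\PGL_2\times\GL_1$, introduces an auxiliary puncture $\Sigma'=\Sigma\setminus\{p\}$ so that $\pi_1\Sigma'$ is free, and then chooses a semisimple $\SL_2$-lift of $\mathrm{pr}\circ\rho$, whereas you twist $\rho$ by a finite-order character $\chi$ with $\chi^2=\det\rho$, which exists because $\det\rho$ has finite image and factors through the free abelian group $H_1(\Sigma;\Z)$. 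Your twist avoids both the extra puncture and the lifting step, and it is manifestly compatible with semisimplicity and with the pair-of-pants finiteness hypothesis, so it is a clean alternative. You also spell out something the paper's combined proof leaves implicit, namely that the hypothesis of Theorem~\ref{top0} forces finite monodromy along every simple closed curve (so that Lemma~\ref{nonarch} applies); this is worth saying. One small slip: you dismiss $n\le 1$ and then jump to $n\ge 3$, silently omitting $n=2$. That case is still fine—the core curve of the twice-punctured sphere is a boundary component of an embedded pair of pants whose third boundary curve is null-homotopic, so the single generator of $\pi_1\Sigma\cong\Z$ has finite image—but it should be stated rather than skipped.
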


\begin{proof}[Proof of Theorems \ref{top} and \ref{top0}]
Suppose $\rho:\pi_1\Sigma\to\GL_2(\C)$ satisfies the conditions of Theorem \ref{top} or \ref{top0}. Since the morphism
\begin{equation*}\label{eqeq}
({\textup{pr}},{\det}):\GL_2(\C)\to\PGL_2(\C)\times\GL_1(\C)
\end{equation*}
has finite kernel, it suffices to show that the compositions ${\textup{pr}}\circ\rho:\pi_1\Sigma\to\PGL_2(\C)$ and ${\det}\circ\rho:\pi_1\Sigma\to\GL_1(\C)$ each have finite image. The fact that ${\det}\circ\rho$ has finite image is clear. To show that ${\textup{pr}}\circ\rho$ has finite image, let us first introduce a punctured surface $\Sigma'=\Sigma\setminus p$ for some $p\in\Sigma$. Since the morphism $i:\pi_1\Sigma'\to\pi_1\Sigma$ is surjective, it suffices to show that ${\textup{pr}}\circ\rho\circ i$ has finite image; choosing a semisimple lift $\tilde\rho:\pi_1\Sigma'\to\SL_2(\C)$ of ${\textup{pr}}\circ\rho\circ i$ (which exists since $\pi_1\Sigma'$ is free), it suffices to show that $\tilde\rho$ has finite image. Note that $\tilde\rho$ has finite monodromy along every simple loop on $\Sigma'$.

Thus, it only remains to prove the theorem in the case where the image of $\rho$ lies in $\SL_2(\C)$. We proceed as outlined in Section \ref{sect:1.2}. By Lemma \ref{nonarch}, the character $\tr\rho$ of $\rho$ takes values in the ring of algebraic integers (and \emph{a fortiori} in $\bar\Q$). Since $\rho$ is semisimple, up to $\SL_2(\C)$-conjugation we may thus assume that the image of $\rho$ lies in $\SL_2(\bar\Q)$. Moreover, any conjugate $\rho^\sigma$ of $\rho$ by an element $\sigma\in\Gal(\bar\Q/\Q)$ of the absoluate Galois group of $\Q$, given as
$$\rho^\sigma(a)=(\rho(a))^\sigma\quad\text{for all $a\in\pi_1\Sigma$},$$
satisfies the hypotheses of Theorem \ref{top} or \ref{top0}, since having finite order is preserved under $\Gal(\bar\Q/\Q)$. It follows by Lemma \ref{nonarch} together with Lemma \ref{arch} (for surfaces of positive genus) or Lemma \ref{arch0} (for surfaces of genus zero) that the eigenvalues of $\rho(a)$ for each $a\in\pi_1\Sigma$ are algebraic integers (being roots of the characteristic polynomial $X^2-(\tr\rho(a))X+1=0$) whose Galois conjugates all have absolute value $1$ in $\C$, and by Kronecker's theorem they must be roots of unity, i.e.~$\rho(a)$ has finite order (note that $\rho(a)$ is semisimple by Lemma \ref{arch} or \ref{arch0}). Finally, Selberg's lemma states that any finitely generated subgroup of $\GL_r(L)$ for a field $L$ of characteristic zero has a torsion-free finite-index subgroup. Applied to the image of $\rho$, this shows that the image of $\rho$ is therefore finite.
\end{proof}

\begin{remark}
As communicated by H\'el\`ene Esnault, one could replace the last steps of the above proof by a simpler argument using discreteness and compactness. On the other hand, the proof presented above has some robust features that may be more widely applicable.
\end{remark}

\begin{thm}{\ref{curve}}
The $p$-curvature conjecture is true for rank $2$ vector bundles with connection on a generic curve of genus $g\geq0$ with $n\geq0$ punctures.
\end{thm}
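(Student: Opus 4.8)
The plan is to deduce Theorem \ref{curve} from the topological finiteness Theorems \ref{top} and \ref{top0}, together with the geometric inputs already in hand, after first peeling off the classically known cases. Fix a generic curve $C$ of genus $g$ with $n$ punctures and a rank two vector bundle with connection $(V,\nabla)$ on $C$ whose $p$-curvatures vanish for almost all primes $p$; write $\Sigma$ for the punctured topological surface underlying $C(\C)^{an}$ and $\rho\colon\pi_1\Sigma\to\GL_2(\C)$ for the monodromy of $(V,\nabla)^{an}$. Proving Conjecture \ref{ConjA} for $(V,\nabla)$ means showing that $(V,\nabla)$ trivializes on a finite \'etale cover of $C$, i.e.\ has a full set of algebraic solutions, which holds in particular once $\rho$ has finite image. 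As recalled in \S\ref{sect:1.1}, the cases with $2g+n\le 2$ have abelian, hence solvable, monodromy and are known by \cite{Andre,Bost,Chudnovskis}, while the case $(g,n)=(0,3)$ is Katz's hypergeometric case \cite{katz2}; so I may assume $3g+n-3>0$, and I split into the subcases $g\ge 1$ and $g=0$ (where necessarily $n\ge 4$).

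First I would dispose of the reducible case at one stroke. If the local system underlying $(V,\nabla)^{an}$ has an invariant line, then after conjugation the image of $\rho$ lies in a Borel subgroup of $\GL_2(\C)$, which is solvable; thus $(V,\nabla)$ has solvable monodromy, and since its $p$-curvatures vanish for almost all $p$ it has a full set of algebraic solutions by \cite{Andre,Bost,Chudnovskis}. This settles in particular every $\rho$ that fails to be semisimple, as well as every semisimple $\rho$ that is a direct sum of two characters. It therefore only remains to treat irreducible $\rho$; such a $\rho$ is in particular semisimple, so the standing semisimplicity hypothesis of Theorems \ref{top} and \ref{top0} is met.

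Now I would argue by genus. For $g\ge 1$: the main result of \cite{ananth} --- which crucially uses that $C$ is generic --- gives that $(V,\nabla)$ has finite monodromy along every simple loop on $C$; carried across the homeomorphism $C(\C)^{an}\cong\Sigma$, this is exactly the hypothesis of Theorem \ref{top}, whose conclusion is that $\rho$ has finite image. For $g=0$, hence $n\ge 4$: Theorem \ref{genuszero} --- which uses genericity of the punctures --- says $(V,\nabla)^{an}$ has finite monodromy on every pair of pants $P\subset C(\C)^{an}$, which is exactly the hypothesis of Theorem \ref{top0}, again forcing $\rho$ to have finite image. In both cases $(V,\nabla)$ becomes trivial on the finite \'etale cover of $C$ attached to $\ker\rho$, which proves Conjecture \ref{ConjA} for $(V,\nabla)$ and hence Theorem \ref{curve}, modulo Theorems \ref{top}, \ref{top0}, \ref{genuszero} and the result of \cite{ananth}.

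All the weight sits in those four inputs: Theorems \ref{top} and \ref{top0}, to be proved in \S\ref{sect:2} by combining the nonarchimedean bound (all traces are algebraic integers) with the archimedean bound (the representation is unitarizable) and then invoking Kronecker's theorem and Selberg's lemma; Theorem \ref{genuszero}, to be proved by degenerating the generic genus zero curve to a nodal curve with a $\mathbb{P}^1\setminus\{0,1,\infty\}$ component, extending the connection onto that component via a good-reduction statement, and applying Katz's theorem \cite{katz2} for $\mathbb{P}^1\setminus\{0,1,\infty\}$; and the nodal-degeneration argument of \cite{ananth}. Inside the present reduction there is essentially no obstacle; the only point needing care is organizational, namely that non-semisimple monodromy must be routed through the known solvable case, since Theorems \ref{top} and \ref{top0} are stated for semisimple representations. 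Equivalently, one could always pass to the semisimplification $\rho^{ss}$, deduce from the topological theorems that $\rho^{ss}$ has finite image, and then use \cite{Andre,Bost,Chudnovskis} to exclude a nontrivial unipotent part in the monodromy of $(V,\nabla)$; either way it is precisely the known solvable case that bridges $\rho$ and $\rho^{ss}$.
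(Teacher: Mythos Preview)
Your proposal is correct and follows essentially the same route as the paper's proof: reduce to the irreducible (hence semisimple) case via the known solvable case \cite{Andre,Bost,Chudnovskis}, then for $g\ge 1$ feed the simple-loop finiteness from \cite{ananth} into Theorem \ref{top}, and for $g=0$ feed the pair-of-pants finiteness from Theorem \ref{genuszero} into Theorem \ref{top0}. Your treatment is slightly more explicit in separating out the low-complexity cases $2g+n\le 2$ and $(g,n)=(0,3)$ and in spelling out the alternative via semisimplification, but the logical skeleton is identical to the paper's.
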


\begin{proof}
Let $(V,\nabla)$ be a rank two vector bundle with connection on a generic curve of genus $g\geq0$ with $n\geq0$ punctures, such that almost all $p$-curvatures of $(V,\nabla)$ vanish. By \cite[Theorem 1.3]{ananth}, every simple loop has finite monodromy. Moreover, in the case $g=0$, the restriction of its monodromy representation (upon specialization) to any pair of pants is finite by Theorem \ref{genuszero}.

Choose a specialization of the generic curve to a curve over $\C$ with associated Riemann surface $\Sigma$, and consider the monodromy representation $\rho:\pi_1\Sigma\to\GL_2(\C)$ associated to $(V,\nabla)$. Since the $p$-curvature conjecture is known to be true in the case of solvable monodromy (by Andre \cite{Andre}, Bost \cite{Bost}, and  D.~Chudnovsky--G.~Chudnovsky \cite{Chudnovskis}), we may assume that $\rho$ is irreducible (and in particular semisimple). It follows by Theorems \ref{top} and \ref{top0} that the image of $\rho$ is finite, as desired.
\end{proof}

\section{Good reduction at the special fiber}
\label{sect:3}

In this section, we first prove Theorem \ref{goodredcurve} and then use it to deduce Corollary \ref{goodred}. 
\subsection{Proof of Theorem \ref{goodredcurve}}
 






Maintain the notation from the statement of Theorem \ref{goodredcurve}. Suppose that $B'\subset B$ is an open subvariety containing $b$, the generic point of $E$, and $C'\subset C|_{B'}$ is also open and contains the generic point of $C_b$. The following lemma shows that it suffices to treat the case of $C'\rightarrow B'$, instead of $C\rightarrow B$: 

\begin{lemma}\label{lemma:C'C}
Notation as above. Suppose that $(V,\nabla)$ extends to $C'_b$. Then, $(V,\nabla)$ extends to $C_b$.
\end{lemma}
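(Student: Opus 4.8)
The plan is to reduce the extension problem across the special fiber from the bigger family $C \to B$ to the smaller family $C' \to B'$ by a combination of Hartogs-type extension in codimension $\geq 2$ and extension across the divisor $E$ using the given hypothesis. First I would observe that the complement $D := C_b \setminus C'_b$ is a proper closed subset of the irreducible curve $C_b$, hence a finite set of closed points; likewise $E \setminus B'$ has codimension $\geq 2$ in $B$ (it is a proper closed subset of the irreducible divisor $E$, so it has codimension $\geq 1$ in $E$ and thus $\geq 2$ in $B$). The key geometric point is that the locus $C|_B \setminus C'$ of points of $C$ over $B$ that are missing from $C'$ has codimension $\geq 2$ in $C$: over the generic point of $E$ we are missing only finitely many points of the curve fiber (codimension $2$ inside $C$), and over $B \setminus B'$ (codimension $\geq 2$ in $B$, hence the preimage has codimension $\geq 2$ in $C$ since $C \to B$ is flat of relative dimension $1$) we may be missing everything but that is still codimension $\geq 2$.

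Next I would carry out the extension in two stages. We are given a vector bundle with connection on $C \times_B B^o$, and by hypothesis it extends (as a vector bundle with connection relative to $B'$) to $C'_b$, i.e.\ to $C' \times_{B'} B'$ — more precisely, there is an extension over $C'$ (the restriction of $C|_{B'}$), relative to $B'$. So we now have a bundle with connection defined on the open set $C' \cup (C|_B \setminus C_b) = C \setminus (C_b \setminus C'_b)$, whose complement in $C$ — after also throwing in the part of $C$ over $B\setminus B'$ — is contained in a closed subset of codimension $\geq 2$. A coherent sheaf (here locally free) defined away from a codimension $\geq 2$ closed subset of a smooth (or normal) variety extends uniquely to a reflexive sheaf on all of $C$, and the relative connection, being an integrable differential operator, extends along with it by the same Hartogs argument applied to its matrix entries; one then checks the extended reflexive sheaf is in fact locally free near the codimension-$2$ locus using that it is locally free in codimension $1$ there and carries a connection (a reflexive sheaf with connection on a smooth variety is a vector bundle, since its singularities would be in codimension $\geq 2$ but a connection forces local freeness — alternatively restrict to the smooth surface obtained by slicing). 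This produces an extension of $(V,\nabla)$ to all of $C$, in particular to $C_b$, as desired.

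I expect the main obstacle to be the bookkeeping around local freeness at the finitely many missing points of $C_b$: the naive Hartogs extension only gives a reflexive (or even just torsion-free normal) extension, and one must genuinely invoke the presence of the connection to upgrade this to a vector bundle at those points — a reflexive sheaf carrying an integrable connection on a smooth variety is automatically locally free, which handles it, but the cleanest phrasing may instead be to extend first over $C' $ together with a Zariski-open neighborhood in $C_b$ of all but finitely many points, then extend the bundle-with-connection across each remaining point of the curve $C_b$ individually using that $C_b$ is a smooth curve and a connection on a curve minus a point with no monodromy/regularity obstruction extends — but here we must be careful, since in general there is monodromy, so the correct route really is the codimension-$2$ Hartogs statement for the pair (bundle, connection) on the ambient smooth $C$. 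A secondary technical point is ensuring the extension is relative to $B$ (not just abstractly a bundle with connection on $C$): this follows because the extended connection agrees with the given relative connection on the dense open $C \times_B B^o$, and the property of a connection being relative to the base is closed.
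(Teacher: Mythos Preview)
Your overall strategy matches the paper's: observe that, once $(V,\nabla)$ extends to $C'_b$, it is defined on the complement of a codimension-$2$ closed subset $Z \subset C$; push forward and take the reflexive hull $(i_* V)^{\vee\vee}$; then extend $\nabla$ across by Hartogs. The divergence is in how you recover local freeness. You aim to show the reflexive extension is a vector bundle on \emph{all} of $C$, invoking the principle ``a reflexive sheaf carrying an integrable connection on a smooth variety is locally free.'' That principle is standard for \emph{absolute} connections (one produces a local frame of horizontal sections), but here $\nabla$ is only a connection \emph{relative to $B$} --- it gives no flatness in the base directions --- and the claim is not available in that generality. Your fallback suggestion (restrict to a surface slice) shows the restriction is locally free, but this does not by itself force local freeness on the ambient $C$.

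The paper sidesteps this issue entirely, and in fact does not use the connection to get local freeness at all. Since $C$ is smooth, a reflexive sheaf is automatically locally free in codimension~$2$ (Hartshorne, \emph{Stable reflexive sheaves}, Corollaries~1.2 and~1.4), so $(i_* V)^{\vee\vee}$ is a vector bundle on the complement of a closed set $W$ of codimension~$\geq 3$. Such a $W$ cannot meet the fiber $C_b$ over the codimension-$1$ point $b$, so the bundle already extends across $C_b$. Hartogs then extends $\nabla$. In short, your argument is on the right track but overreaches: you only need local freeness along $C_b$, not on all of $C$, and that comes for free from reflexivity alone once you cite the codimension-$3$ bound on the singular locus of a reflexive sheaf.
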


\begin{proof}
By the condition that $C'$ contain the generic point of $C_b$, the pair $(V,\nabla)$ is defined on the complement of a codimension $2$ closed subset $Z \subset C$.  Since $C/\Spec k$ is smooth, the vector bundle $V$ uniquely extends to a vector bundle $V'$ on the complement of a codimension 3 closed set $W \subset C$ contained in $Z$ (apply  \cite[Corollary 1.2, Corollary 1.4]{hart80} to $(i_{*}V)^{\vee \vee}$, where $i: C\setminus Z \hookrightarrow C$ is inclusion). In particular, $V$ must extend to $C_b$. Finally, by Hartog's theorem, the connection $\nabla$ extends to a connection on $V'$. The lemma follows.
\end{proof}

We now choose $C'$ and $B'$ so as to satisfy the following conditions:
\begin{enumerate}\label{assone}

\item  $B' = \Spec R$ is affine and $E' = E \cap B'$ is defined by the vanishing of a single element $q \in R$. 

\item We have $C' = \Spec S$ is affine, $\Omega_{C'/R}$ is the trivial line bundle, and there exists $D \in \Der(\mathcal{O}_{C'}/R)$ such that for all but finitely many maximal ideals  $\mathfrak{p} \subset \Oo_k$ , we have $D^p \equiv D \mod \mathfrak{p}$ where $\Oo_k/\mathfrak{p}$ has characteristic $p$. 

\item There exists a trivializing basis {\bf e} of sections of $V$ on the open set $C'|_{B' \setminus E'} = \Spec S[1/q]$, with respect to which the endomorphism
$\nabla(D)$ is cyclic (see for eg. \cite[Theorem 4.4.2]{kedlaya})\footnote{The reference proves the existence of a cyclic vector when restricted to the generic point of $C$. This can then be spread out to some open subvariety of $C|_{B^o}$, which dictates our choice of $C'$ and $B'$.}. 

\end{enumerate}

In light of Lemma \ref{lemma:C'C}, we can assume that $C =C'$ and $B=B'$.  We make this assumption from here on.



\begin{remark}
  We can extend the vector bundle $V$ (which lives on $C|_{B \setminus E}$) trivially to all of $C$ using the cyclic basis ${\mathbf{e}}$. Then, in order to prove Theorem \ref{goodredcurve}, it suffices to show that the connection matrix of $\nabla$ with respect to $\mathbf{e}$ has no poles in $q$.
\end{remark}

\begin{remark}\label{notrestrictive}
   If $f:X \to Y$ is any family of reduced curves parametrized by a pointed base $(Y,y)$, and if $Z$ is any irreducible component of the fiber $X_y$, then there always exists an open affine subscheme $X' \subset X$ which intersects $Z$ nontrivially and a derivation $D \in \Der({\mathcal{O}_{X'/Y}})$ satisfying condition (2) above. This can be obtained by choosing an appropriate dominant map $X \to \A^{1} \times Y$ and pulling back the derivation $x \cdot d/dx$ -- we leave the details to the reader.
\end{remark}

\begin{definition}
Let $F(S)$ denote the fraction field of $S$, and let $D \in \Der_R(F(S))$. Let $\nu$ denote a discrete valuation on $F(S)$. We say that the derivation $D'$ is $\nu$-{\bf integral} if $\nu(D(\alpha)) \geq \nu(\alpha)$ for all $\alpha \in F(S)$. 
\end{definition}
The fact that $D \in \Der(S/R)$ implies that $D$ is $q$-integral (here, we abuse notation to allow $q$ to denote the $q$-adic valuation on $\Oo_C$).
\begin{theorem}\label{thesis}
Let $C = \Spec S \rightarrow \Spec R$ be the affine curve satisfying the conditions  above, and let $(V,\nabla)$ denote a vector bundle on $C$ with connection relative to $R[1/q]$ satisfying the above assumptions. If the $p$-curvatures of $(V,\nabla)$ vanish for almost all primes $\fp$, then $\nabla$ extends to a connection over $C \rightarrow \Spec R$. 
\end{theorem}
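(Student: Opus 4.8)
The plan is to work entirely in terms of the cyclic vector ${\mathbf e}$ fixed above. With respect to ${\mathbf e}$, the connection $\nabla$ is determined by the single matrix $A := \nabla(D) \in \GL_2$ of a $2\times 2$ companion-type form with entries in $S[1/q]$. Since $D$ is $q$-integral, our task (by the remark following condition (3)) reduces to showing that $A$ has entries in the localization $S_{(q)}$ — i.e.\ that $A$ has no pole along the divisor $E = \{q = 0\}$ inside $C$. So I will argue by contradiction: suppose $A$ has a pole of some order $m \geq 1$ along (the generic point of) $C_b$, and derive a contradiction from the vanishing of $p$-curvature for almost all $\fp$.

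**The core mechanism: $p$-curvature and the $q$-adic valuation.**

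Let $\nu$ denote the $q$-adic valuation on $F(S)$, normalized so that $\nu(q) = 1$, and recall $D$ is $\nu$-integral. The $p$-curvature of $(V,\nabla)$ in characteristic $p$ is the operator $\psi_p = \nabla(D)^p - \nabla(D^p)$; because condition (2) arranges $D^p \equiv D \pmod{\fp}$ for almost all $\fp$, vanishing of $\psi_p$ says exactly that, modulo $\fp$,
\[
A^{(p)} \;:=\; \nabla(D)^{[p]} \;=\; A \pmod{\fp},
\]
where $A^{[p]}$ denotes the $p$-th iterate $\bigl(\nabla(D)\bigr)^p$ computed as a differential operator — that is, $A^{[p]}$ is the matrix obtained by the recursion $A^{[1]} = A$, $A^{[j+1]} = D(A^{[j]}) + A^{[j]}A$ (the "$p$-curvature vanishes" identity in cyclic form). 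The key point is to track how $\nu$ interacts with this recursion. If $\nu(A) = -m < 0$ at the generic point of $C_b$, then a direct induction using $\nu$-integrality of $D$ and the ultrametric inequality shows $\nu(A^{[j]}) \geq -jm$, but more importantly the \emph{leading term} behaves multiplicatively: writing $A = q^{-m}(A_0 + q(\cdots))$ with $A_0 \neq 0$ a matrix over the residue field of the generic point of $C_b$, one computes that the leading term of $A^{[p]}$ in $q^{-pm}$ is governed by $A_0^p$ together with contributions from $D$ applied to the powers of $q$. This is the step I expect to require the most care: one must check that the "$D$ of the pole" contributions do not accidentally cancel $A_0^p$, which uses the specific structure of $D$ (that $D = x\,d/dx$-type, hence $D(q^{-m})$ is a unit multiple of $q^{-m}$ when $q$ is chosen appropriately relative to the coordinate, rather than raising the pole order).

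**Getting the contradiction from Fermat and Kronecker-type input on eigenvalues.**

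Granting the leading-term computation, reduction mod $\fp$ of the identity $A^{[p]} = A$ forces an equation, at the generic point of $C_b/\fp$, of the shape (leading coefficient of $A^{[p]}$) $= 0$ since $-pm < -m$ for $m \geq 1$ and $p > 1$ — unless $m = 0$. More precisely: the $q^{-pm}$-coefficient of $A^{[p]}$ must vanish mod $\fp$, and I claim this coefficient is (a nonzero scalar times) $A_0^p + (\text{lower-degree-in-}D\text{ terms involving } A_0)$ which, after the explicit bookkeeping, is a nonzero polynomial expression in the entries of $A_0$ that does not vanish identically. Since this must hold for almost all $\fp$, it must hold over the generic fiber in characteristic zero, contradicting $A_0 \neq 0$. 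Concretely, for the $2\times2$ cyclic (companion) matrix the relevant invariant is the "residue" $\mathrm{Res}_{C_b}(A) = A_0$, and the computation shows the $p$-curvature obstruction forces the eigenvalues of this residue to satisfy $\lambda^p \equiv \lambda$, hence $\lambda \in \F_p$, for almost all $p$; an eigenvalue of a fixed matrix over a number field that lies in $\F_p$ for almost all $p$ must be $0$ or (rescaling) a root of unity, and a Fuchs-type relation together with $\nu$-integrality of $D$ then pushes the pole order down — iterating, $m = 0$.

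**Main obstacle and how I would handle it.**

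The hard part will be the leading-term / residue calculation: controlling precisely which terms of the iterated operator $A^{[p]}$ contribute to the top-order pole $q^{-pm}$, and verifying that the resulting expression in $A_0$ is not identically zero mod $\fp$ for almost all $\fp$. I would handle this by first passing to the completed local ring $\widehat{\mathcal O}_{C, \eta}$ at the generic point $\eta$ of $C_b$ — a complete DVR with uniformizer $q$ and residue field $\kappa(\eta)$ of characteristic zero — so that $A = \sum_{i \geq -m} A_i q^i$, then observing that $D$ restricted there is, up to a unit, $q\,\partial_q$ plus a term mapping $\widehat{\mathcal O}$ to itself. The recursion $A^{[j+1]} = D(A^{[j]}) + A^{[j]} A$ then becomes a recursion on Laurent tails whose lowest-order coefficient is visibly $A_0^{[j+1]} = (\text{const})\cdot A_0^{[j]} + A_0^{[j]} A_0$, i.e.\ purely algebraic in $A_0$, from which the Frobenius identity $A_0^{[p]} = A_0$ mod $\fp$ is extracted and analyzed as above. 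This localizes all the combinatorics to a one-variable problem and isolates exactly the place where condition (2) on $D$ is used.
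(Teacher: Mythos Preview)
Your strategy has the right shape but the key step fails. Since $D$ is a derivation \emph{relative to} $R$ and $q\in R$, we have $D(q)=0$; hence the recursion $A^{[j+1]}=D(A^{[j]})+A^{[j]}A$ on lowest-order coefficients is simply $A_0^{[j+1]}=A_0^{[j]}A_0$, with no ``$(\text{const})\cdot A_0^{[j]}$'' term. So the $q^{-pm}$-coefficient of $A^{[p]}$ is exactly $A_0^p$, and the whole argument rests on $A_0^p\neq 0$. But $A_0$ is the leading coefficient of a companion matrix: for $m\geq 1$ only the last column survives, so $A_0$ has rank $\leq 1$ and $A_0^2=a_{r-1}A_0$ where $a_{r-1}$ is the leading coefficient of $f_{r-1}$. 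If the minimum valuation $-m$ is attained by some $f_i$ with $i<r-1$ but not by $f_{r-1}$, then $a_{r-1}=0$ and $A_0$ is nilpotent, so $A_0^p=0$. Concretely, take $r=2$, $\nu(f_0)=-2$, $\nu(f_1)=0$: then $m=2$, $A_0=\begin{pmatrix}0&c\\0&0\end{pmatrix}$, and one checks directly that $A^{[j]}$ has pole order only $j$, not $2j$. Your proposed Kronecker-type endgame on eigenvalues of $A_0$ runs into the same wall: those eigenvalues are $a_{r-1}$ and $0$.

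The paper repairs this by abandoning matrix coefficients and passing to the splitting field $L$ of the characteristic polynomial of $A$ over $\kappa_C((q))$. One picks an eigenvalue $\lambda$ of maximal absolute value $\ell$ (so $\ell>1$ exactly when some $f_i$ has negative valuation, since $|f_i|\leq\ell^{r-i}$ with equality for some $i$) and its eigenvector $w_\lambda=(1,\lambda,\dots)$ up to normalization. The crucial estimate is on a single \emph{coordinate}: one shows $|(\nabla(D)^p w_\lambda)[r-1]|=\ell^p|w_\lambda[r-1]|$, because among all length-$p$ words in $A$ and $D$ applied to $w_\lambda$, only $A^p$ achieves the maximal size in that coordinate (every appearance of $D$ strictly lowers it, by $\nu$-integrality). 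Since $\nabla(D^p)=\nabla(D)$ gives size $\ell|w_\lambda[r-1]|$ in that coordinate, the $p$-curvature cannot vanish. In the example above $\ell=|f_0|^{1/2}$ has valuation $-1$, which is the genuine growth rate your $m=2$ overshoots; tracking $\lambda$ rather than the matrix entrywise is what sidesteps the nilpotency of $A_0$.
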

We will spend the rest of this section proving preparatory lemmas and proposition, ultimately proving Theorem \ref{thesis}.
\subsection*{Setup}
The proof of Theorem \ref{thesis} goes along the lines of the arguments used in \cite[Lemma 3.3, Proposition 3.4]{ananth}. We first set up our notation. Let $r$ be the rank of $V$. The connection matrix $A$ (of $\nabla(D)$) with respect to the cyclic basis $\be$ has the form 
\begin{equation*}\label{connectionmatrix}
A=
\left(
\begin{array}{cccc}
&&& f_0\\
1& &&f_1\\
&\ddots & &\vdots\\
&&1&f_{r-1}
\end{array}
\right)
\end{equation*}
In order to prove Theorem \ref{thesis}, it suffices to prove that the $f_i \in S[1/q]$ all have non-negative $q$-adic valuation. Let $\fp \subset \Oo_k$ (with associated rational prime $p$) be such that: (1) $p > r$, (2) the $q$-adic valuations of the $f_i$ stay the same modulo $\fp$, and (3) the $p$-curvature of $(V,\nabla)$ vanishes modulo $\fp$. It then suffices to prove that the reductions of the $f_i$ modulo such primes $\fp$ have non-negative $q$-adic valuation. 

To that end, we work modulo $\fp$ for the rest of the proof, and we assume for sake of contradiction that some $f_i \in S[1/q]$ has negative $q$-adic valuation. We localize $R$ and $S$ at $(q)$, and replace these rings by their $q$-adic completions. Then, the map $C \rightarrow B$ is replaced by $\Spec \kappa_C[[q]] \rightarrow \Spec \kappa_B[[q]]$, where $\kappa_B \subset \kappa_C$ are fields of characteristic $p$. The derivation $D$ naturally extends to a $q$-integral derivation $D \in \Der(\kappa_C[[q]] / \kappa_B[[q]])$. And finally, the connection $\nabla$ yields a map $\nabla(D): \kappa_C((q))^r \rightarrow \kappa_C((q))^r$, with $\nabla(D) (v) = Av + D(v)$, where $A$ is as in Equation \eqref{connectionmatrix}. \autoref{thesis} immediately follows from the following proposition: 

\begin{proposition}\label{reduceto}
Notation as above. Suppose that one of the entries of $A$ has negative $q$-adic valuation. Then, $\nabla(D)^p \neq \nabla(D^p)$, i.e. the $p$-curvature of $\nabla$ doesn't vanish. 
\end{proposition}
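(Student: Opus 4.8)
The plan is to work entirely over the field $\kappa_C((q))$ of characteristic $p$, and to track the $q$-adic valuation of the iterated connection operator $\nabla(D)^p$ applied to the cyclic basis vector $e_1$ (the first vector of $\be$). The companion form of $A$ means that, writing $v_0 = e_1$, we have $\nabla(D)(e_j) = e_{j+1} + f_{j-1} e_r$ for $j < r$ and $\nabla(D)(e_r) = f_{r-1} e_r$; in particular, applying $\nabla(D)$ repeatedly to $e_1$ produces, after $r-1$ steps, $e_r$ plus lower terms, and thereafter the dynamics are governed by the scalar $f_{r-1}$ together with the other $f_i$. The key point is that, because $D$ is $q$-integral, the operator $v \mapsto D(v)$ cannot create new poles, so all the pole growth in $\nabla(D)^k(e_1)$ comes from repeated multiplication by the $f_i$. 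If $-N = \min_i \nu_q(f_i) < 0$ is the most negative valuation occurring among the entries, one expects $\nu_q(\nabla(D)^p(e_1))$ to be roughly $-pN + O(1)$, growing linearly in $p$, whereas I will argue that $\nabla(D^p)(e_1)$ has valuation bounded below independently of $p$ (indeed $\geq -N$, since $D^p$ is again $q$-integral modulo $\fp$ by condition (2) of the setup, so $\nabla(D^p) = A_{D^p} + D^p$ where $A_{D^p}$ is obtained from the connection and has entries of valuation $\geq$ some fixed bound coming from $A$). Since $-pN$ eventually dominates any fixed bound once $p > r$ is large (and more carefully, for all $p > r$ with the right bookkeeping), this forces $\nabla(D)^p \neq \nabla(D^p)$.

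Concretely, the main steps I would carry out are: (i) set up the recursion $w_{k+1} = A w_k + D(w_k)$ with $w_0 = e_1$, and prove by induction a two-sided estimate on $\nu_q(w_k)$ — the lower bound is automatic from $q$-integrality of $D$, and the crucial content is a lower bound on how *negative* the valuation is *forced* to be, i.e. an estimate of the form $\nu_q(w_k) \leq -(k - r + 1)N + c$ for $k \geq r-1$, where $c$ depends only on $A$ and $r$ but not on $p$; (ii) make this precise by isolating the leading pole: after $r-1$ applications one reaches $e_r$, and then $\nabla(D)^m(e_r)$ is $(f_{r-1} + D)^{\circ m}$-type growth plus feedback from $f_0, \dots, f_{r-2}$, and one shows the $e_r$-component of $w_k$ has valuation exactly (or at most) a linearly decreasing function of $k$ — here one must check the hypothetical cancellation of leading terms cannot happen, which is where the condition $p > r$ and the structure of the companion matrix enter (there are only $r$ "slots", so a pole of order $N$ appearing in position $i$ cannot be cancelled before it propagates to $e_r$ and starts compounding); (iii) separately bound $\nu_q(\nabla(D^p)(e_1))$ below by a constant independent of $p$, using that $D^p \equiv D$ only relates the derivation parts, but we may also just bound $\nabla(D^p) e_1 = (\text{connection term}) + D^p(e_1)$ directly — $D^p(e_1) \in \kappa_C[[q]]$ has nonnegative valuation, and the connection term is a single application of the connection form in the direction $D^p$, whose entries have valuation $\geq -N$; (iv) compare: $\nu_q(\nabla(D)^p(e_1)) \to -\infty$ as a function of $p$ while $\nu_q(\nabla(D^p)(e_1)) \geq -N$, contradiction.

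The main obstacle I anticipate is step (ii): ruling out miraculous cancellation of the leading poles over the course of $p$ iterations. A priori, even though each application of $\nabla(D)$ threatens to deepen the pole, the additive term $D(w_k)$ and the interplay among the $f_i$ in the companion matrix could conspire to cancel top-degree polar parts, so that the valuation does *not* drop linearly. The companion/cyclic structure is exactly what should prevent this: I would argue that the $e_r$-coefficient satisfies a scalar recursion of the form $g_{k+1} = f_{r-1} g_k + D(g_k) + (\text{contributions from } g_{k}^{(j)}, j<r, \text{ which are determined by earlier } g\text{'s})$, unwind it to see that the polar part is controlled by powers of the most polar $f_i$, and invoke $p > r$ to guarantee that within $p$ steps the pole has genuinely compounded at least once beyond any cancellation the finite-dimensional feedback can produce. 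Getting the bookkeeping on valuations clean — especially handling the case where the most negative valuation occurs in $f_i$ for $i < r-1$ rather than in $f_{r-1}$ — is the delicate part; I would likely first treat the $r=2$ case (which is all that is ultimately needed for Theorem \ref{curve}) where $A = \begin{pmatrix} 0 & f_0 \\ 1 & f_1\end{pmatrix}$ and the recursion is genuinely two-dimensional and transparent, then remark on the general $r$.
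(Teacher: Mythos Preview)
Your overall strategy---compare the $q$-adic valuations of $\nabla(D)^p$ and $\nabla(D^p)$ applied to a test vector, using that $D$ is $q$-integral so the ``$D$-words'' cannot compete with the pure $A^p$ term---is exactly the paper's strategy. The substantive difference is the choice of test vector: you apply everything to the cyclic basis vector $e_1$, whereas the paper passes to the splitting field $L$ of the characteristic polynomial of $A$, extends $D$ to a $\nu$-integral derivation on $L$, and applies everything to an eigenvector $w_\lambda$ for an eigenvalue $\lambda$ of maximal absolute value $\ell>1$.

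This choice is precisely what dissolves your acknowledged ``main obstacle'' in step (ii). With $w_\lambda$ one has $A^p w_\lambda = \lambda^p w_\lambda$, so the last coordinate of the word $W_0 = A^p$ applied to $w_\lambda$ has absolute value exactly $\ell^p|w_\lambda[r-1]|$; any other word $W$ contains at least one $D$, and a short coordinatewise estimate (the paper's Lemma~\ref{sofya}) shows $|W w_\lambda[r-1]|$ is strictly smaller. Since $\nabla(D^p)=\nabla(D)$ (because $D^p=D$), one has $|\nabla(D^p)(w_\lambda)[r-1]|=\ell|w_\lambda[r-1]|<\ell^p|w_\lambda[r-1]|$, and the argument concludes in one line. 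No delicate bookkeeping about which $f_i$ carries the worst pole is needed, because the eigenvalue $\lambda$ already packages that information, and there is no possibility of cancellation among eigencomponents since there is only one.

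By contrast, your vector $e_1$ decomposes as a combination of all eigenvectors, so $A^p e_1$ is a sum of terms $c_i\lambda_i^p w_{\lambda_i}$, and when several eigenvalues share the maximal absolute value the leading contributions can interfere in a fixed coordinate. Your proposed mechanism (``there are only $r$ slots, so a pole cannot be cancelled before it propagates and compounds'') is suggestive but not a proof; making it rigorous essentially forces you back to the eigenbasis over $L$. Note also that the hypothesis $p>r$ is used in the paper not to prevent cancellation directly, but to ensure $L/\kappa_C((q))$ is tamely ramified so that $D$ extends $\nu$-integrally (Lemma~\ref{extder})---a point your plan does not anticipate. In short: your outline is on the right track, but the missing idea is to test against $w_\lambda$ rather than $e_1$.
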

The idea of the proof of Proposition \ref{reduceto} is to argue that $\nabla(D)^p(w) \neq \nabla(D^p)(w)$ for an appropriately chosen eigenvector $w$ of $A$, by showing that the $q$-adic absolute value of a component of the vector $\nabla(D)^p(w)$ is strictly greater than its counterpart in $\nabla(D^p)(w)$. The proof of Proposition \ref{reduceto} will be provided after two lemmas.

\subsection*{Notation} To proceed with further, we will need some notation. Let $L / \kappa_C((q))$ denote the (separable, because $p>r$) field extension of $\kappa_C ((q))$ obtained by adjoining the eigenvalues of $A$. The $q$-adic valuation extends uniquely to a valuation $\nu$ on $L$. Let $|\cdot|$ denote the absolute value on $L$ induced by the valuation $\nu$.  Let $\lambda \in L$ denote an eigenvalue of $A$ with \emph{maximal} $\nu$-adic absolute value (i.e. minimal $q$-adic valuation), and let $\ell = |\lambda|$. Let $w_{\lambda}$ denote an eigenvector with eigenvalue $\lambda$. Given any vector $w$, we let $w[m-1]$ denote the $m^{th}$ coordinate of $w$ with respect to some basis which is clear in context.

\begin{lemma}\label{extder}
The derivation $D$ can be $\nu$-integrally  extended to $L$.
\end{lemma}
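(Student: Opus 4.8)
The plan is to combine the uniqueness of extensions of derivations along separable field extensions with the tameness forced by the hypothesis $p>r$.

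First I would extend $D$ from $\kappa_C[[q]]$ to $K:=\kappa_C((q))$ by the quotient rule; since $D$ is $q$-integral, a routine computation with the quotient rule shows this extension is again $\nu$-integral, where $\nu$ is the $q$-adic valuation on $K$. Because $L/K$ is finite and separable (recall $p>r$), $D$ then admits a unique extension to a derivation of $L$, and the only point left to check is $\nu$-integrality on $L$, where now $\nu$ denotes the unique valuation on $L$ above the $q$-adic one and $\mathcal{O}_L$ its valuation ring (a complete discrete valuation ring, finite over $\mathcal{O}_K=\kappa_C[[q]]$, since $\mathcal{O}_K$ is a complete DVR). The structural point I would exploit is that $L$ is the splitting field over $K$ of the characteristic polynomial of $A$, a polynomial of degree $r$; hence $[L:K]$ divides $r!$ and is prime to $p$, so the ramification index $e$ and residue degree $f$ of $L/K$ are prime to $p$. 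In particular $L/K$ is tamely ramified and the residue extension $\kappa_L/\kappa_C$ is separable, so I can insert the maximal unramified subextension $K\subseteq L^{\mathrm{ur}}\subseteq L$, with $L^{\mathrm{ur}}/K$ unramified of degree $f$ and $L/L^{\mathrm{ur}}$ totally tamely ramified of degree $e$, and treat the two steps separately. For each I would use the elementary remark that, for an extension $F'/F$ of complete discretely valued fields, a $\nu$-integral derivation of $F$ that extends to $F'$ is automatically $\nu$-integral on $F'$ once $D(\mathcal{O}_{F'})\subseteq\mathcal{O}_{F'}$ and $\nu(D\varpi)\geq\nu(\varpi)$ for a single uniformizer $\varpi$ of $F'$; this follows by writing an arbitrary element as $\varpi^n u$ with $u\in\mathcal{O}_{F'}^{\times}$ and differentiating.

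For the \emph{unramified step} I would write $\mathcal{O}_{L^{\mathrm{ur}}}=\mathcal{O}_K[\theta]$, with $g\in\mathcal{O}_K[X]$ the monic minimal polynomial of a lift $\theta$ of a primitive element of $\kappa_L/\kappa_C$, so that $g'(\theta)\in\mathcal{O}_{L^{\mathrm{ur}}}^{\times}$. Differentiating $g(\theta)=0$ yields $D\theta=-g^{D}(\theta)/g'(\theta)$, where $g^{D}$ is $g$ with $D$ applied to its coefficients, so has coefficients in $D(\mathcal{O}_K)\subseteq\mathcal{O}_K$; hence $g^{D}(\theta)\in\mathcal{O}_{L^{\mathrm{ur}}}$, so $D\theta\in\mathcal{O}_{L^{\mathrm{ur}}}$ and $D$ preserves $\mathcal{O}_{L^{\mathrm{ur}}}$. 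Since $q$ is still a uniformizer and $D(q)=0$, the remark gives $\nu$-integrality on $L^{\mathrm{ur}}$. For the \emph{totally tamely ramified step}, the condition $p\nmid e$ lets me take $L=L^{\mathrm{ur}}(\pi_L)$ with $\pi_L^{e}=\varpi$ a uniformizer of $L^{\mathrm{ur}}$ and $\mathcal{O}_L=\mathcal{O}_{L^{\mathrm{ur}}}[\pi_L]$. Differentiating $\pi_L^{e}=\varpi$ gives $D\pi_L=D(\varpi)/(e\pi_L^{e-1})$, and since $\nu(D\varpi)\geq\nu(\varpi)=1$, $\nu(e)=0$ and $\nu(\pi_L)=1/e$, I get $\nu(D\pi_L)\geq 1-(e-1)/e=1/e=\nu(\pi_L)$. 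Expanding elements of $\mathcal{O}_L$ in the $\mathcal{O}_{L^{\mathrm{ur}}}$-basis $1,\pi_L,\dots,\pi_L^{e-1}$ then shows $D(\mathcal{O}_L)\subseteq\mathcal{O}_L$, and the remark again gives $\nu$-integrality, now on $L$, which is what we want.

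I expect the main obstacle to be conceptual rather than computational: for a general, possibly wildly ramified, finite separable $L/K$ the unique extension of a $\nu$-integral derivation need not be $\nu$-integral, so everything hinges on using $p>r$ to rule wild ramification out (and, along the way, to get separability of the residue extension). Granting that, the only genuine estimate is the inequality $\nu(D\pi_L)\geq\nu(\pi_L)$ in the totally ramified piece, where the Eisenstein relation $\pi_L^{e}=\varpi$ supplies precisely the extra unit of valuation needed to absorb the loss coming from differentiating $\pi_L^{e-1}$; the remainder is bookkeeping with the valuation $\nu$.
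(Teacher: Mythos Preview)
Your proposal is correct and follows essentially the same approach as the paper: both use $p>r$ to deduce tameness of $L/\kappa_C((q))$, split into an unramified extension followed by a totally tamely ramified one, handle the unramified step by differentiating the minimal polynomial to see $D\theta=-g^{D}(\theta)/g'(\theta)\in\mathcal{O}$, and handle the totally ramified step via the explicit uniformizer. The only cosmetic difference is that the paper embeds $L$ into $T[q^{1/m}]$ and uses $D(q^{1/m})=0$ directly (since $D$ is $\kappa_B[[q]]$-linear), whereas you work inside $L$ with a general uniformizer $\varpi$ of $L^{\mathrm{ur}}$ and the estimate $\nu(D\pi_L)\geq\nu(\pi_L)$; your version is marginally more general but the content is the same.
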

\begin{proof}
There is a unique extension of $D$ to $L$ (which we will also denote by $D$), and it remains to prove the $\nu$-integrality of $D$. Note that the extension $L / \kappa_C((q))$ cannot have any wild ramification, as $L$ is the splitting field of a degree $r$ polynomial over $\kappa_C((q))$ and $p > r$ by assumption. As the extension $L/\kappa_C((q))$ is tamely ramified, it follows that $L \subset T[q^{1/m}]$, where $T/\kappa_C((q))$ is unramified, and $m \geq 1$.

We claim that extension of $D$ to $T$ is $q$-integral because $T/\kappa_C((q))$ is unramified : indeed, let $\alpha \in \mathcal{O}_T^{\times}$. It suffices to show that the $q$-adic valuation of $D(\alpha) \geq 0$. Let $g \in \kappa_C[[q]][x]$ denote the monic irreducible polynomial which $\alpha$ satisfies. Denote by $D(g)$ the polynomial obtained by applying $D$ to the coefficients of $g$. Then, $D(\alpha)$ is easily seen to equal $\frac{D(g)(\alpha)}{g'(\alpha)}$. As $T /S$ is unramified, it follows that $g'(\alpha)$ is a $q$-adic unit. It is also apparent that the $q$-adic valuation of $D(g)(\alpha)$ is non-negative, as required. 

We now show that $D$ can be $\nu$-integrally extended from $T$ to $L$. Let $s = \displaystyle{\sum_{i = 0}^{m-1} t_i q^{i/m}}$ be an element of $S'$, where $t_i \in T$. The unique extension of $D$ to $S$ satisfies $D(q^{i/m}) = 0$ for every $i$, therefore $D(s) = \sum_i D(t_i)q^{i/m}$. The $q$-adic valuation of $t_i q^{i/m}$ is different from the valuation of $t_j q^{j/m}$ for $i \neq j$. Therefore, $v_q(s) = \min_i \{v_q(t_iq^{i/m}) \}$. For any $i$, the $q$-integrality of $D$ on $T$ yields that $v_q(D(t_i)q^{i/m}) \geq v_q(t_i q^{i/m})$. Putting this together, we see that $v_q(D(s)) \geq \min_i\{v_q(D(t_i)q^{i/m}) \} \geq \min_i \{v_q(t_iq^{i/m}) \} = v_q(s)$ and the lemma follows. 

\end{proof}

\begin{lemma}\label{sofya}
Suppose that a vector $w$ has the property that for all $m$, $|w[m]| \le$ $(resp.$ $<)$ $\ell^{r-m-1} |w_{\lambda}[r-1]|$. Then: 
\begin{enumerate}
\item $|Dw [m]| < \ell^{r-m-1} |(\lambda w_{\lambda})[r-1]|$, i.e. the coordinates of $Dw$ and $\lambda w_{\lambda}$ will satisfy the \emph{strict} inequalities. 

\item For all $m$, $|(A w)[m]| \leq (resp. <) \, \ell^{r-m-1}|\lambda w_{\lambda}[r-1]|$.

\item The vector $w_{\lambda}$ satisfies the property $|w_{\lambda}[m]| \leq \ell^{r-m-1}|w_{\lambda}[r-1]|$.
\end{enumerate}
\end{lemma}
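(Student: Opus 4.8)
The three assertions should be established in the order (3), (1), (2), since the statement about $w_\lambda$ in (3) is self-contained and the proof of (2) will combine (1) with (3). I would begin with (3). Since $A$ is in companion form, the eigenvector equation $Aw_\lambda = \lambda w_\lambda$ forces a recursion on the coordinates: reading off the first $r-1$ rows of $A$ gives $w_\lambda[m-1] + f_? \cdot w_\lambda[r-1]$-type relations, but more simply, the companion shape means $w_\lambda[m] = \lambda \, w_\lambda[m+1]$ for $m = 0, \dots, r-2$ (after normalizing so the subdiagonal $1$'s propagate), hence $w_\lambda[m] = \lambda^{r-1-m} w_\lambda[r-1]$. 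Taking absolute values gives $|w_\lambda[m]| = \ell^{r-1-m}|w_\lambda[r-1]|$, which is in particular $\leq \ell^{r-1-m}|w_\lambda[r-1]|$, proving (3). (I should double-check the indexing convention $w[m-1] = $ "$m$th coordinate" from the Notation paragraph to make sure the exponents line up; this is the one place a routine but error-prone bookkeeping step hides.)

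Next, for (1): I would compute $Dw$ coordinatewise using Lemma \ref{extder}, which guarantees $D$ extends $\nu$-integrally to $L$, i.e. $|D\alpha| \leq |\alpha|$ for all $\alpha \in L$ (equivalently $\nu(D\alpha) \geq \nu(\alpha)$). Applying this to $\alpha = w[m]$ gives $|Dw[m]| = |D(w[m])| \leq |w[m]| \leq \ell^{r-m-1}|w_\lambda[r-1]|$ by hypothesis on $w$. The point is then to upgrade this to a \emph{strict} inequality against $\ell^{r-m-1}|(\lambda w_\lambda)[r-1]| = \ell^{r-m-1}\cdot \ell \cdot |w_\lambda[r-1]| = \ell^{r-m}|w_\lambda[r-1]|$. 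Since $\ell > 1$ — this is exactly where the standing assumption that some entry of $A$ has negative $q$-adic valuation enters, forcing the maximal eigenvalue absolute value $\ell$ to satisfy $\ell > 1$ (I would justify this via the relation between $\ell$ and $|f_{r-1}| = |\tr A|$ or the constant term, or simply note $\prod$ of eigenvalues is $\pm f_0$ and the bound on $f_i$; in any case $\ell>1$ should already be available or immediate in the ambient setup) — we get $\ell^{r-m-1}|w_\lambda[r-1]| < \ell^{r-m}|w_\lambda[r-1]|$, which is exactly the strict bound claimed. So (1) follows from $\nu$-integrality of $D$ plus $\ell > 1$.

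Finally, for (2): write $(Aw)[m]$ using the companion form. For $m = 0, \dots, r-2$ the $m$th row of $A$ contributes $(Aw)[m] = w[m-1]$ if $m \geq 1$ together with the last-column term $f_m \cdot w[r-1]$; for $m=0$ only the $f_0 w[r-1]$ term. So $(Aw)[m] = w[m-1] + f_m w[r-1]$ (with the convention $w[-1] = 0$). I would bound the two terms separately using the hypothesis on $w$ and the fact that the $|f_i|$ are controlled by powers of $\ell$ (indeed the eigenvalues of $A$ are the roots of $x^r - f_{r-1}x^{r-1} - \cdots - f_0$, so $|f_i| \leq \ell^{r-i}$ by Newton's-inequality/ultrametric estimates on elementary symmetric functions, with the maximal one achieving equality) and then compare against $\ell^{r-m-1}|\lambda w_\lambda[r-1]| = \ell^{r-m}|w_\lambda[r-1]|$. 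The ultrametric inequality $|x+y| \leq \max(|x|,|y|)$ keeps the bound non-strict in the $\leq$ case and, combined with part (1)-style strictness in the "$<$" case, yields the strict version. The main obstacle I anticipate is not conceptual but bookkeeping: getting every exponent of $\ell$ and every coordinate index exactly right across the companion-matrix recursion, and cleanly isolating the input "$\ell > 1$" (equivalently: some $f_i$ has negative valuation $\Rightarrow$ some eigenvalue has absolute value $>1$) that powers the strict inequalities. Everything else is a direct application of $\nu$-integrality (Lemma \ref{extder}) and the ultrametric inequality.
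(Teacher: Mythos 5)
Your overall architecture --- prove (3) first, derive (1) from $\nu$-integrality plus $\ell>1$, and prove (2) by reading off the companion recursion and applying the ultrametric inequality --- mirrors the paper's proof. Your treatment of (1) is correct: the paper tersely says it "follows directly from $\nu$-integrality," but the implicit gain by a factor of $\ell>1$ (which, as you note, comes from the standing hypothesis that some $f_i$ has negative valuation together with $|f_i|\le\ell^{r-i}$) is needed exactly as you describe. One small muddle in (2): the strictness in the "resp.\ $<$" variant does not come from $\ell>1$ or from "part (1)-style" reasoning; it propagates directly from the strict hypothesis on $w$, applied once to the term $w[m-1]$ (index $m-1$) and once to $w[r-1]$ (index $r-1$), combined with the non-strict bound $|f_m|\le\ell^{r-m}$ and the ultrametric inequality.

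There is, however, a genuine error in your argument for (3). You claim that the companion structure forces $w_\lambda[m]=\lambda\,w_\lambda[m+1]$, and hence $w_\lambda[m]=\lambda^{r-1-m}w_\lambda[r-1]$. This would hold for the \emph{transposed} companion form (ones on the superdiagonal, coefficients in the bottom row), but the matrix $A$ in this paper has ones on the subdiagonal and the $f_i$ in the \emph{last column}. Its eigenvector equation therefore reads $\lambda w_\lambda[m]=w_\lambda[m-1]+f_m w_\lambda[r-1]$ for $m\ge1$; the extra term $f_m w_\lambda[r-1]$ cannot be normalized away, the coordinates of $w_\lambda$ are not pure powers of $\lambda$, and there is no "more simple" recursion. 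Your formula would be valid only if $f_m=0$ for all $m\ge1$, and the explicit-formula route collapses. The paper instead proves (3) by induction on $m$: the base case uses $\lambda w_\lambda[0]=f_0 w_\lambda[r-1]$ and $|f_0|\le\ell^r$; the inductive step reads off
\[
\ell\,|w_\lambda[m]| = |w_\lambda[m-1]+f_m w_\lambda[r-1]| \le \max\bigl(|w_\lambda[m-1]|,\,|f_m|\,|w_\lambda[r-1]|\bigr) \le \ell^{r-m}|w_\lambda[r-1]|,
\]
using the inductive hypothesis and $|f_m|\le\ell^{r-m}$. You should replace your explicit formula with this induction; everything else in your plan then goes through.
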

\begin{proof}
Part (1) follows directly from the $\nu$-integrality of $D$.

Since the $f_{m}$ are elementary symmetric expressions in the eigenvalues of $A$ and since $\lambda$ is an eigenvalue with largest absolute value, it follows that  $|f_m| \leq \ell^{r-m}$ holds, with equality holding for some $m$. 

To prove part (2), we must show that $|Aw[m]| \le$ $(resp.$ $<)$ $ \ell^{r-m}|w_{\lambda}[r-1]|$. This is clear for $Aw[0]$ by the assumption on $w$ and the fact that $Aw[0] = f_0 w[r-1]$. For $m \geq 1$, we have $Aw[m] = f_mw[r-1] + w[m-1]$. Since $|f_m| \leq \ell^{r-m}$, it follows that $|f_mw[r-1]| \le$ $(<)$ $|f_m w_{\lambda}[r-1]| \le \ell^{r-m}|w_{\lambda}[r-1]|$. Therefore it suffices to show that $|w[m-1]| \le$ $(<)$ $\ell^{r-m}|w_{\lambda}[r-1]|$. But this holds by hypothesis.

Finally, we prove the third part by induction on $m$. As in the proof of part (2), we get $\lambda w_{\lambda}[0] = (A \cdot w_{\lambda})[0] = f_0w_{\lambda}[r-1]$. Thus, the claim is proven for $m = 0$. For $m >0$, we have  $\lambda w_{\lambda}[m] = A w_{\lambda}[m] = w_{\lambda}[m-1] + f_mw_{\lambda}[r-1]$. It follows that $\ell|w_{\lambda}[m]| \leq \max \{|w_{\lambda}[m-1]|, |f_m w_{\lambda}[r-1]|\}$. The term  $|w_{\lambda}[m-1]|$ is bounded by $\ell^{r-m} |w_{\lambda}[r-1]|$ by induction, and $|f_m w_{\lambda}[r-1]|$ is bounded by $\ell^{r-m} |w_{\lambda}[r-1]|$ because $|f_m| \leq \ell^{r-m}$ for all $m$. The lemma follows.

\end{proof}

We are now ready to prove Proposition \ref{reduceto}. 
\begin{proof}[Proof of Proposition \ref{reduceto}]
We will prove that $\nabla(D)^p w_{\lambda} \neq \nabla(D^p) w_{\lambda}$, from which the result obviously follows. We now introduce some notation. For any vector $v$, denote by $v[m]$ its $(m+1)^{th}$ entry. 
We have
$$\nabla(D)^p w_{\lambda} = \sum_{W \in \it{I}} W w_{\lambda},$$
where $\it{I}$ is the set of all length $p$ words in the letters $A$ and $D$. Here, $A$ acts on any vector by left-multiplication, and $D$ acts on each coordinate in the natural way.

As in \cite{ananth}, we will show using Lemma \ref{sofya} that the word $W_0 = AA\hdots A$ ($p$ times) has the property that $|(W_0w_{\lambda})[r-1]|$ is strictly larger than $|(Ww_{\lambda})[r-1]|$ for every $W_0 \neq W \in \it{I}$. 

 Lemma \ref{sofya}(3) implies that $|w_{\lambda}[m]| \leq \ell^{r-m-1}|w_{\lambda}[r-1]|$. Let  $w^j$ and $w_0^j$ be the vectors obtained by applying the first $j$ letters of $W$ and $W_0$ respectively on $w_{\lambda}$. Lemma \ref{sofya} implies $|w^j[m]| \le \ell^{r-m-1}|w_0^j[m]|$ for any $0\le m \le r-1$. However, $W$ differing from $W_0$, must contain the letter $D$. Suppose that  such a letter first occurred at the $j_0^{th}$ stage. Then, Lemma \ref{sofya} (1) implies that $|w^{j_0}[m]| < \ell^{r-m-1}|w_0^{j_0}[r-1]|$. According to Lemma \ref{sofya} (2), $|w^{j}[m]| < \ell^{r-m-1}|w_0^{j}[r-1]|$, for $j \ge j_0$. 
Therefore, $|(W_0w_{\lambda})[r-1]| > |(Ww_{\lambda})[r-1]|$ for every $W \neq W_0$, and hence $|\nabla(D)^p(w_{\lambda})[r-1]|= |(A^p w_{\lambda}) [r-1]| = \ell^p|w_{\lambda}[r-1]|$. On the other hand,  by Lemma \ref{sofya} (1) and the hypothesis that $D^p = D$, $$|\nabla(D^p)(w_\lambda)[r-1]| = |(D+A)(w_\lambda)[r-1]| =  \ell|w_{\lambda}[r-1]| .$$ Thus $\nabla(D)^p w_{\lambda} \neq \nabla(D^p) w_{\lambda}$, which is what we needed to show.
\end{proof}

\begin{proof}[Proof of \autoref{thesis}]
\autoref{thesis} immediately follows from Proposition \ref{reduceto}.
\end{proof}

\subsection{Families of holomorphic vector bundles with connection}\label{familyhol}

Here we spell out for the reader standard constructions and results involving families of vector bundles with connection. For a reference, see \cite{del}.

Our usual setting is: $f: X \to B$ is a smooth morphism between smooth complex manifolds with $d$-dimensional fibers, and $p: B \to X$ is a section.  Further, $(V, \nabla)$ is a rank $r$ holomorphic vector bundle with integrable connection relative to $f$ on $X$. 

When the family $f$ is locally (on $B$) homeomorphic to a product, we review the construction of the corresponding holomorphic family of monodromy representations attached to $(V,\nabla)$.  We  assume that the reader is familiar with the basic correspondence between locally constant sheaves on a manifold and representations of its fundamental group.

The essential ingredient is:

\begin{lemma}[Cauchy-Kowalewski Theorem]
    The sheaf of solutions $\ker \nabla$ is locally (in the Euclidean topology on $X$) isomorphic to the inverse-image sheaf
    $f^{-1}\Oo_{B}^{r}$. 
    \label{Cauchy-Kowalewski}
\end{lemma}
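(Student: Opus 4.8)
The plan is to prove the Cauchy--Kowalewski statement by reducing to the classical local existence theorem for holomorphic systems of ordinary differential equations with parameters, using the section $p$ and the relative nature of $\nabla$. First I would work locally: fix a point $x\in X$ with image $b = f(x) \in B$, and choose local holomorphic coordinates so that, on a polydisc neighborhood, $f$ looks like the projection $\Delta^d \times \Delta^e \to \Delta^e$ with coordinates $(t_1,\dots,t_d)$ on the fiber direction and $(s_1,\dots,s_e)$ on the base, and so that $V$ is trivialized as $\Oo^r$ on this polydisc. In this trivialization the relative connection $\nabla$ is given by $\nabla = d_{X/B} + \Omega$, where $d_{X/B}$ is the relative de Rham differential and $\Omega = \sum_{i=1}^d \Omega_i\, dt_i$ is an $r\times r$ matrix of holomorphic functions of all the variables $(t,s)$; integrability of $\nabla$ relative to $f$ is exactly the condition that this connection in the $t$-variables is flat for each fixed $s$.

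Next I would invoke the holomorphic Frobenius / Cauchy--Kowalewski theorem for the flat system $d_{X/B} \sigma = -\Omega \sigma$ in the variables $t$, with $s$ entering as a holomorphic parameter: shrinking the polydisc if necessary, there is a unique holomorphic $r\times r$ matrix solution $F(t,s)$ with $F(0,s) = \mathrm{Id}$, and $F$ is invertible. The columns of $F$ give a holomorphic frame of $\ker\nabla$ over the polydisc, and any local section of $\ker\nabla$ is uniquely $F \cdot c(s)$ for a holomorphic vector $c$ depending only on $s$ — that is, pulled back from the base. This is precisely the statement that $\ker\nabla \cong f^{-1}\Oo_B^r$ locally on $X$: the frame $F$ furnishes an isomorphism between $\ker\nabla$ restricted to the polydisc and the pullback along $f$ of the free sheaf $\Oo_B^r$ on the image polydisc in $B$. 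One should note that the rank is exactly $r$ (not less) because $F$ is invertible, so the solution sheaf is locally free of the expected rank.

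The main obstacle is really just bookkeeping rather than a genuine difficulty: one must be careful that the classical Cauchy--Kowalewski existence theorem is being applied to the correct system (the relative one, in the fiber variables only) and that the dependence on the base parameters $s$ is genuinely holomorphic and not merely smooth — this is guaranteed by the holomorphic version of the theorem, where holomorphic dependence on parameters is automatic. A secondary point to check is that integrability of $\nabla$ relative to $f$ is exactly the compatibility condition $\partial_{t_i}\Omega_j - \partial_{t_j}\Omega_i + [\Omega_i,\Omega_j] = 0$ needed for the overdetermined system $\nabla\sigma = 0$ to have a full $r$-dimensional space of solutions near each point; this is immediate from unwinding the definition of the curvature of a relative connection. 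Once these are in place, the identification $\ker\nabla \cong f^{-1}\Oo_B^r$ is local and the lemma follows; I would remark that this is entirely standard and refer to \cite{del} for the details of the parametrized Frobenius theorem.
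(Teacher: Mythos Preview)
Your argument is correct and is exactly the standard proof: the paper itself does not give an independent argument but simply cites \cite[Theorem 2.23]{del}, whose proof is precisely the local computation you outline (trivialize, apply the holomorphic Frobenius/Cauchy--Kowalewski theorem in the fiber variables with holomorphic dependence on the base parameters, and read off the local frame of $\ker\nabla$). One small remark: your opening sentence mentions using the section $p$, but your actual argument---correctly---never does, since the statement is purely local on $X$ and needs no section.
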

\begin{proof}
    This statement is contained in the proof of \cite[Theorem 2.23]{del}. It is known as the Cauchy-Kowalewski theorem
    on existence and uniqueness of solutions to certain types of differential
    equations.
\end{proof}

\begin{corollary}
    Let $i : X_{b} \hookrightarrow X$ denote the inclusion of the fiber
    over $b \in B$. Then
    the sheaf $i^{-1}\ker \nabla$ is a locally constant sheaf of
    free rank $r$ $\Oo_{B,b}$-modules on $X_{b}$ (in the Euclidean topology). 
\end{corollary}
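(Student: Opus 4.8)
The plan is to deduce this corollary directly from the Cauchy--Kowalewski Lemma \ref{Cauchy-Kowalewski} by restricting along the inclusion $i : X_b \hookrightarrow X$. First I would recall that $i^{-1}$ is an exact functor on sheaves of abelian groups, so it commutes with the formation of kernels; in particular $i^{-1}\ker\nabla = \ker(i^{-1}\nabla)$ is naturally a sheaf on $X_b$. The key point is purely local on $X_b$: by Lemma \ref{Cauchy-Kowalewski}, each point of $X$ — and hence each point $x \in X_b$ — has a Euclidean-open neighborhood $W \subset X$ on which $\ker\nabla|_W \cong (f^{-1}\Oo_B)^r|_W$. Restricting this isomorphism along $i$ gives $i^{-1}(\ker\nabla)|_{W \cap X_b} \cong i^{-1}(f^{-1}\Oo_B)^r|_{W \cap X_b}$.

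Next I would identify the restricted sheaf $i^{-1}f^{-1}\Oo_B$. Since $f \circ i : X_b \to B$ is the constant map with value $b$, we have a canonical identification $i^{-1}f^{-1}\Oo_B = (f i)^{-1}\Oo_B \cong \underline{\Oo_{B,b}}$, the constant sheaf on $X_b$ with stalk the local ring $\Oo_{B,b}$ (here one uses that $B$ is locally connected, so that the inverse image of a sheaf along a constant map is the constant sheaf on the stalk at that point, at least after restricting to a connected neighborhood). Consequently $i^{-1}\ker\nabla$ is, locally on $X_b$, isomorphic to $\underline{\Oo_{B,b}}^r$, which is exactly the statement that it is a locally constant sheaf of free $\Oo_{B,b}$-modules of rank $r$. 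The $\Oo_{B,b}$-module structure is inherited from the $f^{-1}\Oo_B$-module structure on $\ker\nabla$, and the local trivializations above are $\Oo_{B,b}$-linear because the isomorphisms of Lemma \ref{Cauchy-Kowalewski} are $f^{-1}\Oo_B$-linear.

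I expect the only genuinely delicate point to be the bookkeeping with inverse-image sheaves and constant sheaves — namely, verifying carefully that $i^{-1}f^{-1}\Oo_B$ really is the constant sheaf $\underline{\Oo_{B,b}}$ on $X_b$ (one should shrink to a connected neighborhood of $b$ in $B$ over which $\Oo_B$ contributes only its stalk, or equivalently observe that the presheaf inverse image along the constant map $X_b \to \{b\}$ already has the right stalks and is constant). Everything else is a formal consequence of exactness of $i^{-1}$ and of the local structure provided by Lemma \ref{Cauchy-Kowalewski}. Since the question only asks for rank $r$ local freeness and local constancy as an $\Oo_{B,b}$-module sheaf, no further analytic input beyond Cauchy--Kowalewski is needed.
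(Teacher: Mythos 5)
Your argument is correct and is precisely the implicit derivation the paper has in mind: restrict the local isomorphism $\ker\nabla \cong (f^{-1}\Oo_B)^r$ from Lemma \ref{Cauchy-Kowalewski} along $i$, and identify $i^{-1}f^{-1}\Oo_B = (f\circ i)^{-1}\Oo_B$ with the constant sheaf $\underline{\Oo_{B,b}}$ on $X_b$ because $f\circ i$ is the constant map to $b$. The care you take with the inverse image of a sheaf along a constant map (and with $\Oo_{B,b}$-linearity of the local trivializations) is exactly the right bookkeeping.
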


\subsubsection{Relative local systems and monodromy representations}

Throughout this section, let $f: X \to B$ be a smooth morphism between complex manifolds, which is locally on $B$ homeomorphic to a product. Furthermore, assume $p: B \to X$ is a section, and suppose the fundamental groups of the fibers of the family are finitely generated.  All sheaves considered here are in the analytic topology.

\begin{definition}
	A {\bf $f$-local system} of rank $r$ on $X$ is a sheaf of $f^{-1}\Oo_{B}$-modules on $X$ which is locally (in the Euclidean topology) isomorphic to $(f^{-1}\Oo_{B})^{r}$.

	If $M$ is an $f$-local system, we let $p^{*}M$ denote the rank $r$ locally free sheaf of $\Oo_{B}$-modules $p^{-1}(M)$.
\end{definition}

We define $\underline{\pi_{1}(X/B)}$ to be the locally constant sheaf on $B$ whose stalk at a point $b \in B$ is $\pi_{1}(X_{b}, p(b))$. 

\begin{proposition}\label{prop:ForwardCorrespondence}
	A $f$-local system $M$ on $X$  naturally defines a homomorphism of sheaves of groups: $\rho_{M}: \underline{\pi_{1}(X/B)} \to \Aut(p^{*}M)$.
\end{proposition}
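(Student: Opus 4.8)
The plan is to construct $\rho_M$ fiberwise and then check it globalizes coherently as a map of sheaves on $B$. Fix a point $b \in B$. By the corollary to the Cauchy--Kowalewski theorem, the restriction $i^{-1}M$ of the $f$-local system $M$ to the fiber $X_b$ is a locally constant sheaf of free rank $r$ modules over the local ring $\Oo_{B,b}$ (working in the Euclidean topology). A locally constant sheaf of free $\Oo_{B,b}$-modules on the connected, locally simply connected space $X_b$ is equivalent to the datum of a free rank $r$ $\Oo_{B,b}$-module $F_b$ together with a monodromy action of $\pi_1(X_b, p(b))$ on it; concretely, $F_b$ is the stalk $(i^{-1}M)_{p(b)} = (p^*M)_b$, and the action is by parallel transport of germs around loops. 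This produces, for each $b$, a homomorphism $\pi_1(X_b, p(b)) \to \Aut_{\Oo_{B,b}}(F_b) = \Aut(p^*M)_b$.

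Next I would promote this pointwise construction to a morphism of sheaves on $B$. First, note that $\underline{\pi_1(X/B)}$ is by definition the locally constant sheaf with stalks $\pi_1(X_b,p(b))$, so it suffices to produce the map of sheaves locally on $B$ in a manner compatible with the gluing. Choose a connected open $U \subseteq B$ over which $f$ is homeomorphic to a product $X_U \cong X_{b_0} \times U$ (such $U$ exist by hypothesis) and over which the local system $\underline{\pi_1(X/B)}|_U$ is constant with value $\pi_1(X_{b_0}, p(b_0))$. Over such $U$, a loop $\gamma$ based at the section gives, via parallel transport of sections of $M$ along $\gamma \times \{\mathrm{pt}\}$ inside each fiber, an automorphism of the locally free sheaf $p^*M|_U = p^{-1}M|_U$; this is where Lemma~\ref{Cauchy-Kowalewski} is used, since it guarantees $\ker\nabla$ is locally $f^{-1}\Oo_B^r$, so parallel transport along paths in fibers is well-defined and $\Oo_B$-linear. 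This assignment $\gamma \mapsto (\text{parallel transport automorphism})$ is a group homomorphism (transport is functorial in concatenation of paths) and is locally constant in the base direction because the product structure identifies the fibers' fundamental groups and parallel transport varies holomorphically in the transverse parameter. Hence we obtain $\rho_M|_U : \underline{\pi_1(X/B)}|_U \to \Aut(p^*M)|_U$.

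Finally I would check independence of choices and gluing: on overlaps $U \cap U'$, the two locally defined homomorphisms agree because both are computed by the same fiberwise parallel transport (the product trivializations differ by a homeomorphism isotopic to the identity over a connected base, hence inducing the identity on $\pi_1$ of fibers, and parallel transport is intrinsic to $(V,\nabla)$, not to the trivialization). Therefore the local maps $\rho_M|_U$ glue to a global morphism of sheaves of groups $\rho_M : \underline{\pi_1(X/B)} \to \Aut(p^*M)$, which is the desired homomorphism. Naturality in $M$ (needed implicitly for the word ``naturally'') follows since parallel transport is functorial for morphisms of $f$-local systems.

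The only real subtlety — and the step I expect to demand the most care — is the compatibility of the base-direction local constancy with the fiber-direction monodromy: one must verify that under a product trivialization the family of representations $b \mapsto \rho_{M,b}$ is literally constant (equivalently, that $\rho_M$ is a map of \emph{locally constant} sheaves and not merely a fiberwise-defined collection of homomorphisms). This is handled by invoking Lemma~\ref{Cauchy-Kowalewski} to identify $\ker\nabla$ with $f^{-1}\Oo_B^r$ on a neighborhood of each path in a fiber, so that parallel transport over a loop is represented by a matrix in $\GL_r(\Oo_B)$ depending only on the homotopy class of the loop; restricting to the constant sheaf $\underline{\pi_1(X/B)}|_U$ then gives the stated sheaf homomorphism. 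Everything else is the standard dictionary between locally constant sheaves and $\pi_1$-representations applied with coefficient ring $\Oo_{B,b}$ in place of a field, which requires no new input.
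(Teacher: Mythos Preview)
Your proof is correct and follows essentially the same two-step strategy as the paper: first define the representation on stalks via the standard dictionary between locally constant sheaves on $X_b$ and $\pi_1(X_b,p(b))$-representations, then globalize to a sheaf map on $B$.

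Two small remarks. First, you repeatedly invoke the Cauchy--Kowalewski lemma to identify $M$ locally with $f^{-1}\Oo_B^r$, but this is unnecessary here: the proposition is stated for an arbitrary $f$-local system $M$, and being locally isomorphic to $(f^{-1}\Oo_B)^r$ is the \emph{definition} of such an object. Cauchy--Kowalewski is only needed earlier, to show that $\ker\nabla$ is an example of an $f$-local system; once you are handed $M$, no analysis is required. Your references to ``parallel transport'' should likewise be read as the monodromy of a locally constant sheaf rather than literal integration of $\nabla$. Second, your globalization step differs slightly from the paper's: you use a local product trivialization of $f$ to spread the fiberwise monodromy over an open $U$, whereas the paper instead observes that $\pi_1(X_b,p(b))$ is finitely generated, so the images of a finite generating set in the stalk $\Aut(p^*M)_b$ extend to sections over a common neighborhood $U$, yielding $\rho_U$ directly. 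The paper's argument is a bit slicker since it avoids any discussion of trivializations or compatibility on overlaps beyond ``clearly compatible''; yours is more explicit about what is actually happening geometrically. Both are fine.
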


\begin{proof}
	If $b \in B$ is any point and $i_{b}: X_{b} \hookrightarrow X$ is the inclusion of the fiber, we obtain a locally constant sheaf $i^{-1}(M)$ of $\mathcal{O}_{B,b}$-modules on $X_{b}$. The correspondence between locally constant sheaves and representations of $\pi_{1}(X_{b},p(b))$ on the stalk $(p^{*}(M))_{b}$  provides a canonical  representation
	\begin{align*}
		\rho_{b}: \pi_{1}(X_{b},p(b)) \to \Aut(p^{*}M)_{b}
	\end{align*}
	Since the above fundamental group is finitely generated, there exists an open neighborhood $U \subset B$ of $b$ and a representation $\rho_{U}: \pi_{1}(X_{b},p(b)) \to \Aut(p^{*}M)(U)$ whose germ is $\rho_{b}$.  

	The representations $\rho_{U}$ are clearly compatible on intersections, and collectively define $\rho_{M}$.
\end{proof}

If $M$ is a local system relative to $f$, we call $\rho_{M}$ the {\bf monodromy representation of $M$}. 

\begin{proposition}\label{prop:BackwardCorrespondence}
    Let $W$ be a vector bundle on $B$, and $\rho: \underline{\pi_{1}(X/B)} \to \Aut(W)$ a homomorphism of sheaves of groups. Then there exists a unique $f$-local system $M$ such that $\rho_{M} = \rho$.
\end{proposition}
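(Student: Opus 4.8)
The plan is to construct the $f$-local system $M$ directly from the data $(W,\rho)$ by a patching argument over $B$, exploiting the fact that $f$ is, locally on $B$, homeomorphic to a product, and then to verify both that $\rho_M = \rho$ and that $M$ is the unique such local system. First I would cover $B$ by open sets $U$ over each of which $f$ is homeomorphic to a product $U\times F$, where $F$ is a (fixed, since $B$ is connected) model fiber with base point $x_0 = p(b)$; the section $p$ restricted to $U$ corresponds to $u\mapsto (u,x_0)$ after possibly shrinking $U$ and adjusting the homeomorphism, since two sections into a locally trivial fibration are homotopic locally and we only need the homeomorphism to respect the chosen section. Over such a $U$, giving an $f|_{f^{-1}(U)}$-local system is equivalent — by the ordinary Riemann--Hilbert correspondence in families, applied fiberwise but with coefficient sheaf $f^{-1}\Oo_B|_{f^{-1}(U)}$ — to giving a locally constant sheaf of $\Oo_U$-modules on $F$ together with nothing more, i.e.\ to giving a representation of $\pi_1(F,x_0)$ on an $\Oo_U$-module; concretely one sets $M_U := \widetilde{p^*W|_U}$, the local system on $f^{-1}(U)\cong U\times F$ associated to the representation $\rho_U := \rho|_U \colon \pi_1(F,x_0)\to \Aut(p^*W)(U)$ obtained by taking sections of $\rho$ over $U$, pulled back along the projection $U\times F\to F$ and twisted appropriately. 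The key point making this work is that $\underline{\pi_1(X/B)}$ is the \emph{constant} sheaf with value $\pi_1(F,x_0)$ on each such $U$, so $\rho$ restricted to $U$ really is just a single representation valued in the group of global sections $\Aut(p^*W)(U)$.

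Next I would glue. On an overlap $U\cap U'$ the two trivializations of $f$ differ by a homeomorphism that, up to the chosen sections, induces the identity on $\pi_1(F,x_0)$ (possibly after further shrinking so that overlaps are connected and the identification of fibers is compatible with $p$); hence the two associated local systems $M_U$ and $M_{U'}$ carry the same monodromy data over $U\cap U'$ and so there is a canonical isomorphism $M_U|_{U\cap U'}\xrightarrow{\ \sim\ }M_{U'}|_{U\cap U'}$, namely the one induced by the identity on $p^*W$ over $U\cap U'$ via the correspondence of Proposition~\ref{prop:ForwardCorrespondence}. These isomorphisms satisfy the cocycle condition on triple overlaps because each is dictated by the restriction of the single sheaf map $\rho$, which is globally defined; there is no choice involved, so no obstruction can arise. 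Therefore the $M_U$ glue to a global $f$-local system $M$ on $X$. By construction $p^*M$ is identified with $W$ and, running the forward construction of Proposition~\ref{prop:ForwardCorrespondence} on each $U$, one recovers exactly $\rho_U = \rho|_U$; since the $\rho_{M_U}$ patch to $\rho_M$ and the $\rho|_U$ patch to $\rho$, we get $\rho_M = \rho$.

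For uniqueness, suppose $M$ and $M'$ are two $f$-local systems with $\rho_M = \rho_{M'} = \rho$. Over each trivializing $U$, the equivalence of categories between $f$-local systems on $U\times F$ and $\Oo_U[\pi_1(F,x_0)]$-modules (which is the family version of Riemann--Hilbert, and which follows formally from Lemma~\ref{Cauchy-Kowalewski} together with the correspondence between locally constant sheaves and $\pi_1$-representations) shows $M|_U$ and $M'|_U$ are isomorphic, canonically so once we demand the isomorphism induce the identity $p^*M|_U = W = p^*M'|_U$ on push-forwards; these local isomorphisms agree on overlaps, again because they are pinned down by the identity on $p^*(-)$, and hence glue to a global isomorphism $M\cong M'$ compatible with the identifications of push-forwards. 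This gives uniqueness (up to unique isomorphism), which is what the statement asks.

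The main obstacle I anticipate is not any single hard estimate but rather the bookkeeping needed to make the local trivializations of $f$ \emph{compatible with the section} $p$ and compatible on overlaps at the level of $\pi_1$: one must check that shrinking $U$ enough, the transition homeomorphisms of the fibration can be taken to fix (up to homotopy rel basepoint) the base point $p(b)$, so that the induced automorphisms of $\pi_1(F, x_0)$ are trivial and the gluing data for $M$ is genuinely canonical rather than only canonical up to an inner automorphism. Once that is arranged — which is possible precisely because $f$ is assumed locally homeomorphic to a product over $B$ and $p$ is a section — everything else is the formal machinery of descent for sheaves and the fiberwise Riemann--Hilbert correspondence already invoked in the proof of Proposition~\ref{prop:ForwardCorrespondence}.
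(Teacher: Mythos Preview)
Your proposal is correct and follows essentially the same approach as the paper: cover $B$ by good open sets, build $M$ locally from the representation data, and glue using the fact that $\rho$ is globally defined so the gluing isomorphisms are forced and satisfy the cocycle condition. The paper packages the local construction slightly differently---it pulls $W$ back to the universal cover $\tilde{X}_\alpha$ and descends via the $\pi_1$-action, rather than invoking a family Riemann--Hilbert equivalence as a black box---and it handles your anticipated bookkeeping obstacle by taking the $U_\alpha$ (and their pairwise and triple intersections) to be contractible and using the section $p$ to get canonical inclusions $\tilde{X}_{\alpha\beta}\hookrightarrow \tilde{X}_\alpha$; you should note that the paper's proof does not actually spell out uniqueness, which you do.
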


\begin{proof}
    The $f$-local system $M$ is constructed as follows.  Let $\{U_{\alpha}\}$ be a cover of $B$ consisting of contractible open sets and such that every pairwise and triple intersection are also contractible, and let $\pi_{\alpha}: \tilde{X}_{\alpha} \to X_{\alpha} \to U_{\alpha}$ denote the universal covers of $X_{\alpha}$ mapping down to $U_{\alpha}$.  Then the sheaf $\pi^{-1}_{\alpha}(W)$ inherits, through $\rho_{U_{\alpha}}$, an action of $\pi_{1}(X_{\alpha},p_{\alpha})$ which is compatible with the action of $\pi_{1}(X_{\alpha},p_{\alpha})$ on the universal cover $\tilde{X}_{\alpha}$. Thus, the sheaf descends to a sheaf $M_{\alpha}$ on $X_{\alpha}$, which is locally on $X_{\alpha}$ isomorphic to $f^{-1}(W)$ by construction. 

    Next, over an intersection $U_{\alpha \beta} = U_{\alpha} \cap U_{\beta}$ we get natural sheaf isomorphisms $\phi_{\alpha \beta}: M_{\alpha}|_{X_{\alpha \beta}} \to M_{\beta}|_{X_{\alpha \beta}}$ by noticing that the identity map $X_{\alpha \beta} \times \pi_{\alpha}^{-1}(W|_{U_{\alpha}})|_{\alpha \beta} \to X_{\alpha \beta} \times \pi_{\beta}^{-1}(W|_{U_{\beta}})|_{\alpha \beta}$ is equivariant with respect to the action of $\pi_{1}(X_{\alpha}, p_{\alpha})$ on the left and $\pi_{1}(X_{\beta}, p_{\beta})$ on the right. (Note that the existence of the section $p$ provides  canonical inclusions of the universal cover $\tilde{X}_{\alpha \beta}$ into the universal covers $\tilde{X}_{\alpha}$ and $\tilde{X}_{\beta}$.) The equivariance comes from the agreement of the restrictions of $\rho_{U_{\alpha}}$ and $\rho_{U_{\beta}}$ to $U_{\alpha \beta}$.

    Similarly, on triple intersections we get that the cocycle condition $\phi_{\beta \gamma} \circ \phi_{\alpha \beta} = \phi_{\alpha \gamma}$ holds automatically. 
\end{proof}
To summarize, the content of Propositions \ref{prop:ForwardCorrespondence} and \ref{prop:BackwardCorrespondence} is that the category of $f$-local systems is equivalent to the category of monodromy representations. 

\begin{proposition}\label{proposition:extension}
        Let $i : C \hookrightarrow X$ be a complex submanifold containing the section $p$ which is locally topologically trivial over $B$, with the property that the induced map $\underline{\pi_{1}(C/B)} \to \underline{\pi_{1}(X/B)}$ is surjective.

         If $L$ is a $f$-local system on $C$ which, over a dense open subset $U \subset B$ is the inverse image of an $f$-local system $M_{U}$ on $X_{U}$,  then $M_{U}$ extends to an $f$-local system $M$ on $X$. 
     \end{proposition}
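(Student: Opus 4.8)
The plan is to reduce the statement to a purely topological/group-theoretic fact about the sheaves $\underline{\pi_1(X/B)}$ and $\underline{\pi_1(C/B)}$, and then invoke the equivalence of categories established in Propositions \ref{prop:ForwardCorrespondence} and \ref{prop:BackwardCorrespondence}. Concretely, under the equivalence, the $f$-local system $M_U$ on $X_U$ corresponds to a homomorphism of sheaves of groups $\rho_{M_U}: \underline{\pi_1(X/B)}|_U \to \Aut(W_U)$ for a vector bundle $W_U$ on $U$, and $L$ on $C$ corresponds to $\rho_L: \underline{\pi_1(C/B)} \to \Aut(W)$ for a vector bundle $W$ on $B$. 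The hypothesis that $L|_{C_U}$ is the inverse image of $M_U$ says precisely that $W|_U \cong W_U$ and that $\rho_L|_U$ factors through the surjection $\underline{\pi_1(C/B)}|_U \twoheadrightarrow \underline{\pi_1(X/B)}|_U$, i.e. $\rho_L|_U = \rho_{M_U} \circ (\text{the surjection})$. So I want to produce a homomorphism $\rho_M: \underline{\pi_1(X/B)} \to \Aut(W)$ defined over all of $B$ that restricts to $\rho_{M_U}$ over $U$; then Proposition \ref{prop:BackwardCorrespondence} yields the desired $M$, and its inverse image to $C$ has monodromy $\rho_M$ composed with the surjection, which agrees with $\rho_L$ on the dense open $U$ and hence everywhere (as these are maps of sheaves on the connected-fibered situation, or by the uniqueness clause of Proposition \ref{prop:BackwardCorrespondence} applied to $C$).

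The key step is therefore: \emph{the homomorphism $\rho_{M_U}: \underline{\pi_1(X/B)}|_U \to \Aut(W_U)$ extends (uniquely) to a homomorphism of sheaves of groups over all of $B$.} For this I would argue locally on $B$. Fix $b \in B$ and a contractible neighborhood $\Omega$ over which both $f: X \to B$ and $i: C \hookrightarrow X$ are topologically trivial; then $\underline{\pi_1(X/B)}|_\Omega$ and $\underline{\pi_1(C/B)}|_\Omega$ are the constant sheaves with values $\Gamma_X := \pi_1(X_b)$ and $\Gamma_C := \pi_1(C_b)$, and we have a fixed surjection $\Gamma_C \twoheadrightarrow \Gamma_X$. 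Choosing finitely many generators $\gamma_1, \dots, \gamma_k$ of $\Gamma_X$ and lifting them to $\tilde\gamma_1, \dots, \tilde\gamma_k \in \Gamma_C$, the elements $\rho_L(\tilde\gamma_j) \in \Aut(W)(\Omega) = \GL_r(\Oo(\Omega))$ are holomorphic matrices defined on all of $\Omega$; over $\Omega \cap U$ they satisfy $\rho_L(\tilde\gamma_j) = \rho_{M_U}(\gamma_j)$, so $\rho_{M_U}$ extends to $\Omega$ by setting $\rho_M(\gamma_j) := \rho_L(\tilde\gamma_j)$. One must check this is well-defined, i.e. independent of the choice of lifts and compatible with the relations in $\Gamma_X$: each relation $w(\gamma_1, \dots, \gamma_k) = 1$ in $\Gamma_X$ gives a holomorphic matrix identity $w(\rho_L(\tilde\gamma_1), \dots, \rho_L(\tilde\gamma_k)) = ?$ on $\Omega$ which holds (equals the identity matrix) on the dense open $\Omega \cap U$ — since $w$ maps to a word in $\Gamma_C$ lying in the kernel of $\Gamma_C \to \Gamma_X$ and $\rho_{M_U}$ is a genuine homomorphism there — and hence holds on all of $\Omega$ by analytic continuation (entries of $w(\cdots) - \Id$ are holomorphic functions vanishing on a dense open set of the connected manifold $\Omega$). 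The same analytic-continuation argument shows independence of the lifts $\tilde\gamma_j$, and uniqueness of the extension. Finally, these local extensions agree on overlaps (again by density of $U$ and uniqueness), so they glue to the global $\rho_M$.

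The main obstacle is bookkeeping rather than depth: one must be careful that $\underline{\pi_1(X/B)}$ is a \emph{sheaf of groups}, not a single group, so the extension has to be carried out compatibly with the gluing data of the local trivializations of the fibration, and one needs the hypothesis that fibers have finitely generated $\pi_1$ (already standing in this subsection) to make "extend the values on a finite generating set by analytic continuation" legitimate. A minor point to handle is that $U$ dense in $B$ implies $\Omega \cap U$ dense in each connected $\Omega$, which is immediate, and that the identity theorem applies to the holomorphic functions involved — valid since $\Omega$ is connected. Once the extension of $\rho_{M_U}$ is in hand, the construction of $M$ and the verification $i^{-1}M \cong L$ follow formally from the established equivalence of categories, with no further analytic input.
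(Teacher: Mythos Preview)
Your proposal is correct and follows essentially the same approach as the paper: pass from $L$ to its monodromy representation $\rho_L$ via Proposition~\ref{prop:ForwardCorrespondence}, observe that $\rho_L$ factors through $\underline{\pi_1(X/B)}$ over the dense open $U$, extend this factoring to all of $B$, and then apply Proposition~\ref{prop:BackwardCorrespondence}. The paper compresses your local analytic-continuation argument into the single phrase ``by continuity, $\rho_L$ must factor through $\underline{\pi_1(X/B)}$ on all of $B$''; your version spells out precisely what this means (relations in $\Gamma_X$ give holomorphic matrix identities that hold on a dense open, hence everywhere), which is exactly the content hidden in that phrase.
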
   
\begin{proof}
    The $f$-local system $L$ gives, by Proposition \ref{prop:ForwardCorrespondence}, a monodromy representation 
    \begin{align*}
        \rho_{L}: \underline{\pi_{1}(C/B)} \to \Aut(p^{*}{L})
    \end{align*}
    which factors through $\underline{\pi_{1}(X/B)}$ over the dense open subset $U \subset B$ by hypothesis. By continuity, $\rho_{L}$ must factor through $\underline{\pi_{1}(X/B)}$ on all of $B$, and therefore defines (by Proposition  \ref{prop:BackwardCorrespondence}) the required $f$-local system $M$.
\end{proof}

We make the following remark for future use:

\begin{remark}
  \label{rem:nottrivial}
  If $f:X \to B$ is a smooth morphism between complex varieties with
  section $p:B \to X$ and if $b \in B$ is a any point, then the
  discussion in this section attaches to a vector bundle with
  integrable connection $(V,\nabla)$ on $X$ a monodromy representation
  \begin{align}
    \label{eq:monrep}
    \rho: \pi_1(X_b, p(b)) \to \Aut p^{*}(V)_{b}
  \end{align}
  This local monodromy representation exists irrespective of any
  hypotheses on the topological triviality of the family $f$.
\end{remark}



\subsection{Proof of Corollary \ref{goodred}}

\begin{proof}[Proof of Corollary \ref{goodred}]
Let $X \to B$ be a smooth projective family of $k$-varieties of relative dimension $d$, and $D \subset B$ an irreducible divisor. Suppose further that $(V,\nabla)$ is an algebraic vector bundle with relative flat connection on $X|_{B \setminus D}$ whose $p$-curvatures vanish for almost all $p$.

Then because $f$ is smooth, there exists a quasi-finite base change $U \to B$ whose image contains the generic point of $D$ and which is generically unramified over $D$, such that $X \times_{B}U \to U$ has a section $p: U \to X\times_{B}U$. We choose a divisor $D' \subset U$ lying over $D$, and we replace $B$ with $U$ and $D$ with $D'$. $U$ will serve as the \'etale neighborhood of the generic point of $D$ in the statement of the theorem.

Next, by choosing a suitable collection of relatively very ample divisors  $H_{1}, ..., H_{d-1}$ of $X$ containing the section $p$ and after shrinking $U$ if necessary, Bertini's theorem guarantees that the relative curve $C := H_1 \cap \dots H_{d-1}$ is smooth over $U$.  Furthermore, the Lefschetz hyperplane theorem for fundamental groups guarantees that the induced map
\begin{align*}
    \underline{\pi_{1}(C/U)} \to  \underline{\pi_{1}(X/  U)}
\end{align*}

is surjective. 

The vector bundle with relative flat connection $(V,\nabla)$ on $X|_{U\setminus D'}$ restricts to a vector bundle with connection on $C|_{U\setminus D'}$, and after replacing $C$ with a suitable affine open subscheme, we get a relative curve satisfying the hypotheses in Theorem \ref{thesis} -- see Remark \ref{notrestrictive}. The affine open can be chosen to contain the generic point of the section $p$ over $U$ and to guarantee that $C \to U$ is locally homeomorphic to a product.

By Theorem \ref{thesis}, the vector bundle with relative connection $(V,\nabla)$ extends over $C$, and therefore  the hypotheses in Proposition \ref{proposition:extension} are fulfilled.  By that proposition, we conclude that the $f$-local system $\ker \nabla$ extends over $X$, and by the correspondence between $f$-local systems and vector bundles with relative flat connection, we produce the required analytic extension claimed in  Corollary \ref{goodred}. This completes the proof.
 \end{proof}

\begin{remark}
The argument in the proof of Corollary \ref{goodred}  can be adapted to the following slightly more general situation: $X \to B$ is the complement of a relative simple normal crossing divisor in a smooth projective family $Y \to B$.  We leave the details to the reader.
\end{remark}

\section{Constancy of monodromy}
\label{sect:4}
We spend this section proving Theorems \ref{isomonodromy}, \ref{oneall} and \ref{new}. We first need the following result, which proves formal constancy of the monodromy representation in the setting of Theorems \ref{isomonodromy} and \ref{oneall}. 

We first define what we mean by isomonodromy. 

\begin{definition}\label{isomonodromydef}
Let $C \rightarrow B$ denote a homolomorphic family of smooth curves, inducing a holomorphic map $B \rightarrow \Mgn$, and let $(V,\nabla)$ be a holomorphic vector bundle on $C$ with connection relative to $B$.
\begin{enumerate}
\item Given two points $P,Q \in B$, and a real-analytic non self-intersecting path $\alpha$ connecting $P$ and $Q$, there is a canonical way to identify $\pi_1(C_P)$ and $\pi_1(C_Q)$ up to inner-automorphisms. We say that \emph{the monodromy of $(V,\nabla)$ at $P$ is the same with respect to the path $\alpha$ as the monodromy of $(V,\nabla)$ at $Q$} if the monodromy representation $\rho_P: \pi_1(C_P) \rightarrow \GL_r(\C)$ associated to $(V,\nabla)|_{C_P}$ is isomorphic to the monodromy representation $\rho_Q: \pi_1(C_Q)\rightarrow \GL_r(\C)$, under the identifications of $\pi_1(C_P)$ and $\pi_1(C_Q)$ induced by $\alpha$. 

\item We say that \emph{$(V,\nabla)$ is isomonodromic} if given any two points $P,Q \in B$ and given any non-self intersecting real analytic path $\alpha$ connecting $P$ and $Q$, the monodromy of $(V,\nabla)$ at $P$ is the same with respect to the path $\alpha$ as the monodromy of $(V,\nabla)$ at $Q$.
\end{enumerate}
\end{definition}

\begin{definition}\label{constantmodm}
Given a family of smooth affine curves $f: C \to \Spec k[[q]]$ with central fiber $C_0$, and a vector bundle with connection $(V,\nabla)$ relative to $B$, we say that the connection $\nabla$ is {\sl constant modulo $q^m$} if $(V,\nabla)|_{C \times_{k[[q]]} \Spec k[[q]]/(q^m)} $ is isomorphic to $(V,\nabla)|_{C_0} \times \Spec k[[q]]/(q^m)$. Here we are implicitly using the infinitesimal lifting property to identify $C \times _{k[[q]]} \Spec k[[q]]/(q^{m})$ with $C_{0} \times _{k} \Spec k[[q]]/(q^{m})$.  
\end{definition}

\begin{remark}\label{constantmodmHol}
Definition \ref{constantmodm} has its natural analogue in the holomorphic setting, where $\Spec k[[q]]$ is replaced by the unit disk $\Delta$, and $k[[q]]$ is replaced by the ring $\C\{q\}$ of holomorphic functions on $\Delta$.  Note that if $\Delta \to \Mgn$ is a holomorphic map inducing a family of curves $C \to D$, if $(V,\nabla)$ is vector bundle with connection on $C$ relative to $\Delta$, and if $z \in \Delta$ is any  point, then by choosing a coordinate $q_{z}$ around $z$, one can replicate Definition \ref{constantmodm} to get the notion of constancy of $(V,\nabla)$ modulo $q_{z}^{m}$.  We will use this holomorphic setting in Proposition \ref{isomon} below.
\end{remark}

\begin{proposition}\label{formalconstancy}
Let $C \rightarrow \Spec k[[q]]$ denote a smooth affine curve, and let $(V,\nabla)$ denote a trivial\footnote{Note that the connection is \emph{not} assumed to be trivial, only the vector bundle is.} vector bundle on $C$ with connection relative to $\Spec k[[q]]$. Suppose that the data of $\{ C,(V,\nabla) \}$ arises upon completion from an algebraic family of curves and an algebraic vector bundle with connection, and that the $p$-curvatures vanish for almost all prime numbers $p$. Further suppose that either 
\begin{enumerate}
    \item There exists a full set of algebraic solutions modulo $q$, or
    \item Conjecture \ref{ConjB} is true.
\end{enumerate}
Then for any positive integer $m$, there exists a basis of sections trivializing $V$ with respect to which the connection matrix of $\nabla$ is constant (in $q$) modulo $q^m$. 
\end{proposition}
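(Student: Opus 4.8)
The plan is to argue by induction on $m$, establishing at each stage that the connection is constant modulo $q^m$, with the inductive step controlled by an obstruction class living in an Ext-group (equivalently, a hypercohomology group computing infinitesimal deformations of $(V,\nabla)|_{C_0}$ as a bundle-with-connection on the central fiber). The base case $m=1$ is essentially the hypothesis: in case (1) it is the assumption that there is a full set of algebraic solutions modulo $q$, while in case (2) one invokes Conjecture \ref{ConjB} together with the fact (Andr\'e, Bost, Chudnovsky--Chudnovsky) that the $p$-curvature conjecture is known for solvable monodromy, so that semisimplicity plus vanishing $p$-curvature forces finite monodromy on $C_0$ and hence a full set of algebraic solutions there.

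For the inductive step, suppose $\nabla$ is constant modulo $q^m$; I want to show it is constant modulo $q^{m+1}$. Using the trivialization of $V$ and the identification $C\times_{k[[q]]}\Spec k[[q]]/(q^{m+1})\cong C_0\times_k\Spec k[[q]]/(q^{m+1})$, write the connection matrix as $A_0 + q^m A_m \pmod{q^{m+1}}$ where $A_0$ is the (constant) connection matrix on the central fiber. The flatness (integrability) of $\nabla$, read off at order $q^m$, shows that $A_m$ defines a cocycle, hence a class $[\omega]$ in the group classifying first-order deformations of $(V,\nabla)|_{C_0}$ — which is $\mathbb{H}^1$ of the de Rham complex $\mathrm{End}(V)\to \mathrm{End}(V)\otimes\Omega^1_{C_0}$, or equivalently $\mathrm{Ext}^1$ of $(V,\nabla)|_{C_0}$ with itself in the category of bundles with connection. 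The class $[\omega]$ vanishes precisely when, after a change of trivializing basis (a gauge transformation $I + q^m B$), the connection matrix becomes constant modulo $q^{m+1}$. So the task reduces to: show $[\omega]=0$, i.e.\ there are no nontrivial self-extensions arising here.

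The main obstacle — and the heart of the argument — is ruling out this self-extension. The idea is that a nonzero class $[\omega]$ would produce, over $\Spec k[[q]]/(q^{m+1})$, a connection with strictly larger differential Galois group than on the central fiber: its monodromy (in the formal/analytic neighborhood) would be a nontrivial extension of the monodromy on $C_0$ by a unipotent. Concretely, the Zariski closure of the monodromy of the order-$(m+1)$ truncation would have a nontrivial connected unipotent part. But the $p$-curvatures vanish for almost all $p$ on the whole family, so in particular on this infinitesimal thickening; by the known solvable-monodromy case of the $p$-curvature conjecture (Andr\'e--Bost--Chudnovsky), a connection with vanishing $p$-curvature whose monodromy closure is connected unipotent must be trivial — contradiction. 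This is exactly the mechanism flagged after Theorem \ref{oneall} ("the connected component of the Zariski-closure of monodromy would then be non-trivial and unipotent, and the work of \cite{Andre,Bost,Chudnovskis} implies that this is not possible"). Making this precise requires translating the formal deformation $[\omega]$ into an honest statement about monodromy of a connection over a power-series base — one should reduce mod a prime $\fp$, use that $p$-curvature vanishes there, and run the argument of Proposition \ref{reduceto}/\cite[Lemma 3.3, Proposition 3.4]{ananth} or the differential-Galois formalism to extract the contradiction. I expect the bookkeeping identifying $A_m$ with a genuine $\mathrm{Ext}^1$ class, and then connecting vanishing of that class to constancy modulo $q^{m+1}$ after gauge change, to be routine; the genuinely delicate point is the input that vanishing $p$-curvature kills the unipotent obstruction, which is where the cited solvable-case results do the real work.
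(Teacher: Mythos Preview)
Your overall framework---induction on $m$, with the obstruction to constancy modulo $q^{m+1}$ living in $\mathrm{Ext}^1$ of $(V,\nabla)|_{C_0}$ with itself---is exactly right and matches the paper. But there is a genuine gap in how you propose to kill the obstruction. You speak of ``the monodromy of the order-$(m+1)$ truncation'' having a nontrivial unipotent part; this object is not well-defined (the truncation is not a variety over a field), and your fallback plan (``reduce mod $\fp$ and run Proposition \ref{reduceto}'') does not obviously produce anything. The missing idea is this: the $\mathrm{Ext}^1$ class \emph{is} a rank $2r$ bundle with connection $(V',\nabla')$ on $C_0$ itself, with connection matrix $\left(\begin{smallmatrix} A & B \\ 0 & A\end{smallmatrix}\right)$ where $A+q^{m-1}B$ is the connection of $\nabla$ modulo $q^m$. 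The key lemma, which you do not have, is that $(V',\nabla')$ inherits vanishing $p$-curvatures from the family; this is a direct computation comparing $\nabla'(D)^p$ with $\nabla(D)^p$ expanded to order $q^{m-1}$. Once you know that, hypothesis (1) or (2) applies directly to $(V',\nabla')$ on $C_0$, and the extension splits (block-upper-triangular gauge to block-diagonal), which is exactly constancy modulo $q^m$.

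Two further corrections. First, the base case $m=1$ is trivial (``constant mod $q$'' is automatic) and is not where hypotheses (1) or (2) enter; they enter at every inductive step, applied to the successive $(V',\nabla')$. Second, your reading of case (2) is off: Conjecture \ref{ConjB} does not give finite monodromy on $C_0$, only semisimple monodromy. The point is that Conjecture \ref{ConjB} applied to $(V',\nabla')$ (not to $(V,\nabla)|_{C_0}$) says $(V',\nabla')$ has semisimple monodromy, and a self-extension with semisimple monodromy must split. In case (1), finite monodromy on $C_0$ forces the connected component of the monodromy of $(V',\nabla')$ to be unipotent, hence solvable, and then the known solvable case of the $p$-curvature conjecture applied to $(V',\nabla')$ gives a full set of algebraic solutions and hence again a splitting.
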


\begin{proof}
Without loss of generality, we assume the existence of a derivation $D \in \Der(\Oo_C)$, such that $D^p \equiv D \mod p$.

We proceed by induction on $m$, and will assume the existence of a basis such that the connection matrix $\nabla(D)$ is constant modulo $q^{m-1}$. Let $C_0$ denote the special fiber of $C$. As $C_0$ is affine and smooth, the infinitesimal lifting property implies that $C \times_{k[[q]]} k[[q]]/q^m$ is isomorphic to the trivial deformation $C_0 \times_k k[[q]]/q^m$. Therefore, in terms of this trivialization of $C$ mod $q^m$, the connection matrix $\nabla(D)$ can be expressed as $A + q^{m-1}B$, with $A,B \in M_{n \times n}(\Oo_{C_0})$. 

Consider the rank $2r$ trivial vector bundle $V'$ with connection $\nabla'$ on $C_0$, whose connection matrix with respect to $D$ is 
\[
M =
\left[
\begin{array}{c|c}
A & B \\
\hline
0 & A
\end{array}
\right].
\]

We will prove the following two statements: 

\begin{claim}\label{cone}
If there exists a block upper-triangular change of coordinates of $V'$ (with coefficients in $\Oo_{C_{0}}$) with respect to which $\nabla'(D)$ is block-diagonal, then the assertions of Proposition \ref{formalconstancy} hold. 
\end{claim}

\begin{claim}\label{ctwo}
The $p$-curvatures of $\nabla'$ vanish for almost all primes $p$.  
\end{claim}

The Proposition follows immediately from these claims. Indeed, if $\nabla \mod q$ had a full set of algebraic solutions, then the connected component of the monodromy representation associated to $\nabla'$ would be solvable. We further know by Claim \ref{ctwo} that the $p$-curvatures of $(V',\nabla')$ vanish. As the $p$-curvature conjecture is known when the connected component of monodromy is solvable, we obtain that $(V',\nabla')$ has a full set of algebraic solutions, and therefore that there exists a block upper-triangular change of coordinates with respect to which $\nabla'(D)$ is block-diagonal. Proposition \ref{formalconstancy} then follows by applying Claim \ref{cone}. 

On the other hand, regardless of the monodromy of the special fiber, Conjecture \ref{ConjB} and Claim \ref{ctwo} imply that $(V',\nabla')$ has semisimple monodromy. Thus there exists a block upper-triangular change of coordinates with respect to which $\nabla'(D)$ is block diagonal, and so the proposition would follow from the assertion of Claim \ref{cone}. Therefore, it suffices to prove the two claims. 

\begin{proof}[Proof of Claim \ref{cone}]
Suppose that the block upper-triangular matrix is of the form 
\[
G =
\left[
\begin{array}{c|c}
X & Y \\
\hline
0 & X'
\end{array}
\right].
\]
By again changing coordinates by the matrix 
\[
\left[
\begin{array}{c|c}
X^{-1} & 0 \\
\hline
0 & X'^{-1}
\end{array}
\right],
\]
it follows that we may assume the initial block upper-triangular matrix $G$ has its block-diagonal entries equalling the identity. 

Further, the connection matrix $\nabla' (D)$ undergoes the gauge transformation $G^{-1} M G + G^{-1} D(G)$. A short calculation shows that the top-right block of $\nabla'(D)$ becomes $B + AY - YA + D(Y)$. Therefore, the assumption in Claim \ref{cone} means that there exist a matrix $Y$ such that $B + AY -YA + D(Y) = 0$. 

We now shift our focus back to $\nabla$. Consider the gauge transformation induced by the change of basis matrix $I + q^{m-1} Y$. A short calculation shows that $\nabla(D)$ in these new coordinates equals $A - q^{m-1}(B + AY - YA + D(Y))$, and this quantity equals $A$ as our previous calculation yielded that $B + AY - YA + D(Y) = 0$. The claim follows. 
\end{proof}

\begin{proof}[Proof of Claim \ref{ctwo}]
We work modulo $p$ for the entirety of this proof. As $D^p \equiv D \mod p$, it suffices to prove that $\nabla'(D) \equiv \nabla'(D)^p$. Suppose that the matrix $\nabla(D)^j = P_j + q^{m-1}Q_j$, for some positive integer $j$. We will show by induction that that the matrix $\nabla'(D)^j$ equals 
\[
\left[
\begin{array}{c|c}
P_j & Q_j \\
\hline
0 & P_j
\end{array}
\right].
\]
By induction, we may assume that this holds for $j-1$. Then, $\nabla(D)^j = (A + q^{m-1}B)(P_{j-1} + q^{m-1}Q_{j-1}) + D(P_{j-1} + q^{m-1}Q_{j-1}) = AP_{j-1} + D(P_{j-1}) + q^{m-1}(AQ_{j-1} + BP_{j-1} + D(Q_{j-1}))$. Therefore, $P_j = AP_{j-1} + D(P_{j-1}) $ and $Q_j = AQ_{j-1} + BP_{j-1} + D(Q_{j-1})$. 

On the other hand, a similar calculation shows that 

\[
\nabla'(D)^j = 
\left[
\begin{array}{c|c}
AP_{j-1} + D(P_{j-1}) & AQ_{j-1} + BP_{j-1} + D(Q_{j-1}) \\
\hline
0 & AP_{j-1} + D(P_{j-1})
\end{array}
\right].
\]
The claim now follows from the vanishing of the $p$-curvatures of $\nabla$. 
\end{proof}
Now that both the claims have been proved, the Proposition follows.
\end{proof}
The following result is the other ingredient needed to prove Theorems \ref{isomonodromy} and \ref{oneall}.

\begin{proposition}\label{isomon}
Suppose $C \rightarrow \Delta$ is a family of affine curves over the unit disc $\Delta$ with coordinate $q$ which induces a holomorphic map $\Delta \rightarrow \mathcal{M}_{g,n}$, and suppose $(V,\nabla)$ is a holomorphic vector bundle on $C$ with connection relative to $\C\{q\} = \mathcal{O}^{\hol}(\Delta)$.  For each point $z \in \Delta$ and integer $m$, suppose that  $\nabla$ is constant modulo $q_z^m$, where $q_{z}$ is a local coordinate at $z$. Then $(V,\nabla)$ is isomonodromic. 
\end{proposition}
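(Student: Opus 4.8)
The goal is to upgrade the purely formal/local statement ``$\nabla$ is constant modulo $q_z^m$ at every point $z$, for every $m$'' into the global statement that $(V,\nabla)$ is isomonodromic, i.e. that the monodromy representation $\rho_z\colon\pi_1(C_z)\to\GL_r(\C)$ is constant (up to conjugation) along any real-analytic non-self-intersecting path in $\Delta$. The key observation is that the hypothesis is genuinely a statement at \emph{every} point of $\Delta$, not just the origin, so it should suffice to show that the map $z\mapsto[\rho_z]$ (valued in the character variety, or more precisely in the set of conjugacy classes of representations) is \emph{locally constant} near every $z\in\Delta$; since $\Delta$ is connected this forces global constancy along paths. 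So the real content is purely local: fix $z_0\in\Delta$ and show $[\rho_z]=[\rho_{z_0}]$ for $z$ near $z_0$.

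\textbf{Key steps.} First, I would set up the local comparison. Near $z_0$, choose the coordinate $q_{z_0}$ and recall from Remark \ref{constantmodmHol} / Definition \ref{constantmodm} that ``constant modulo $q_{z_0}^m$'' means $(V,\nabla)$ restricted to $C\times_{\C\{q\}}\C\{q\}/(q_{z_0}^m)$ is isomorphic, \emph{after} the canonical infinitesimal-lifting identification of $C\times\C\{q\}/(q_{z_0}^m)$ with $C_{z_0}\times\C\{q\}/(q_{z_0}^m)$, to the constant family $(V,\nabla)|_{C_{z_0}}\times\C\{q\}/(q_{z_0}^m)$. Second, I would argue that since this holds for \emph{all} $m$, the connection is formally isomonodromic at $z_0$: there is a gauge transformation, a priori only a formal power series in $q_{z_0}$ with coefficients in $\Oo(C_{z_0})$ (or a shrinking of $C_{z_0}$), conjugating the connection matrix into one that is constant in $q_{z_0}$. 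Third — and this is where the holomorphicity hypothesis is used crucially, as opposed to the formal $k[[q]]$ setting of Proposition \ref{formalconstancy} — I would invoke convergence: because the ambient family is holomorphic over the honest disc $\Delta$ (coefficients in $\C\{q\}$, not merely $\C[[q]]$), a formal gauge transformation solving the isomonodromy (Schlesinger-type) equations converges in a neighbourhood, so the constant connection over $C_{z_0}$ and the restriction of $(V,\nabla)$ to a small disc around $z_0$ become holomorphically isomorphic over that small disc. Fourth, an isomorphism of vector bundles with relative flat connection over a topologically trivial family $C|_{\Delta'}\to\Delta'$ (shrinking $\Delta'$ so $C|_{\Delta'}$ is a product) induces, via the monodromy-representation formalism of Section \ref{familyhol} (Propositions \ref{prop:ForwardCorrespondence}, \ref{prop:BackwardCorrespondence}), an isomorphism of the associated families of monodromy representations; hence $[\rho_z]=[\rho_{z_0}]$ for all $z\in\Delta'$. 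Finally, I would conclude: local constancy of $z\mapsto[\rho_z]$ on the connected base $\Delta$, combined with the fact that the identification of $\pi_1(C_P)$ with $\pi_1(C_Q)$ along a path $\alpha$ is exactly the one obtained by concatenating these local trivializations, gives that $\rho_P\cong\rho_Q$ under the $\alpha$-identification, which is the definition of isomonodromy.

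\textbf{Main obstacle.} The crux is the third step: promoting the compatible system of mod-$q_{z_0}^m$ trivializations (equivalently, a formal gauge transformation) to a genuine convergent/holomorphic one. One has to be careful that the gauge transformations chosen at successive orders $m$ can be made compatible — i.e. that there is a single formal power series realizing all of them simultaneously — which is essentially an inverse-limit/obstruction argument, and then that this formal series converges, which relies on the holomorphic (not merely formal) nature of the input data over $\Delta$. In practice I expect this is handled either by appealing to a standard convergence theorem for the isomonodromic deformation equations, or by a direct majorant/estimate argument; alternatively one can sidestep explicit convergence by noting that the monodromy representation of a holomorphic family depends holomorphically (indeed the conjugacy class depends real-analytically) on the base point, so that a map $\Delta\to(\text{character variety})$ which is constant to infinite order at $z_0$ must be locally constant at $z_0$. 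Either route reduces the proposition to the local statement, and the remaining steps are the bookkeeping of the monodromy formalism already developed in Section \ref{familyhol}.
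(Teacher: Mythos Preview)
Your overall strategy---reduce to local constancy of the conjugacy class of $\rho_z$ at each $z_0\in\Delta$, using that the hypothesis is stated at every point---matches the paper, and you correctly identify the crux as passing from the compatible mod-$q_{z_0}^m$ trivializations to genuine constancy of the monodromy near $z_0$.

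Where you diverge from the paper is in how to bridge that gap. Your route (a), proving convergence of the formal gauge transformation on the connection, is not what the paper does and would be hard to justify directly. Your route (b), invoking analyticity of the map to the character variety, is closer in spirit but is delicate for non-semisimple representations: the character variety is singular, and equality of traces to infinite order only pins down the semisimplification. The paper instead works entirely with the monodromy representation $\rho:\pi_1(C)\to\GL_r(\C\{q\})$ itself (already holomorphic by Cauchy--Kowalewski, so no convergence issue arises at this stage). A short inductive claim shows that the mod-$q^m$ isomorphisms can be refined to a single \emph{formal} conjugating element in $\GL_r(\C[[q]])$ between $\rho$ and the constant representation $\rho_0$. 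The finish is then purely algebraic: with $L=\mathrm{Frac}\,\C\{q\}\subset\C((q))$, both $\rho_0$ and $\rho_L$ are defined over $L$ and are conjugate over $\C((q))$; since the transporter $\{g\in\GL_r:g\rho_0 g^{-1}=\rho_L\}$ is a nonempty $L$-variety, they are already conjugate over $\overline{L}$. Running the same argument at any $z$ shows each $\rho_z$ is conjugate to $\rho_L$ over $\overline{L}$, hence $\rho_0$ and $\rho_z$ are conjugate over $\C$. This field-theoretic trick entirely sidesteps the convergence problem you flagged and is the one concrete idea your proposal does not supply.
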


\begin{proof} As the base $\Delta$ is simply connected, there is a canonical identification of the fundamental groups of all fibers of $C$ -- we denote this group by $\pi_1(C)$. After choosing a section, the family of vector bundles with connection gives rise to a representation $\rho: \pi_1(C) \rightarrow \GL_r(\C\{q\})$ (see \S \ref{familyhol}). Let $R$ denote any $\C\{q\}$-algebra. We denote by $\rho_R$ the induced representation of $\pi_1(C)$ valued in $\GL_r(R)$. For any point $z$, we let $\rho_z:\pi_1(C) \rightarrow \GL_r(\C)$ denote $\rho$ specialized to $q = z$. 

Let $L$ denote the field of fractions of $\C\{q\}$. We will show that $\rho_0$ and $\rho_L$ are conjugate under $\GL_{r}(\overline{L})$. This suffices to prove the claim, because the same would hold for $\rho_z$ and $\rho_L$, for any point $z$. It follows that the representations $\rho_0$ and $\rho_z$ are isomorphic over $\overline{L}$, and thus isomorphic over any algebraically closed field, in particular $\C$.

Therefore, it suffices to prove that $\rho_0$ and $\rho_{\overline{L}}$ are isomorphic. We first prove the following claim: 
\begin{claim}
Let $\sigma: \pi_1(C) \rightarrow \GL_r(\C)$ and $\tau:  \pi_1(C)\rightarrow \GL_r(\C[q]/(q^{m+1}))$ be representations such that $\tau \equiv \sigma \mod q^m$ and $\tau$ is isomorphic to $\sigma$. Then, there exists a matrix $M \in M_r(\C)$ such that $(I + q^m M)^{-1} \tau (I + q^{m} M)= \sigma$.
\end{claim}
\begin{proof}[Proof of claim]
By assumption, there exists a matrix $A \in \GL_r(\C[q]/q^{m+1})$ such that $A^{-1} \tau A = \sigma$. Write $A = B + q^m B_m$ with $B_m$ constant and such that $B$ doesn't have any terms involving $q^{m}$. As $\sigma \equiv \tau$ modulo $q^{m}$, it follows that $B$ commutes with $\sigma$. Therefore, conjugating $\tau$ by $A$ is the same as conjugating $\tau$ by $AB^{-1}$. Clearly, $AB^{-1} \equiv I$ mod $q^m$, and so has the form $I + q^mM$. The claim follows.
\end{proof}
Applying this claim inductively yields that $\rho_0$ is isomorphic to $\rho_{\C[[q]]}$. Therefore, $\rho_0$ is isomorphic to $\rho_{\C((q))}$. As $L \subset \C((q))$, it follows that $\rho_0$ and $\rho_L$ are isomorphic over $\C((q))$. Therefore, they are isomorphic over $\overline{L}$, as required.

\end{proof}

We can now provide the proofs of Theorems \ref{isomonodromy} and \ref{oneall}.  The key ingredient in both proofs is Proposition \ref{formalconstancy}.

\begin{proof}[Proof of Theorem \ref{isomonodromy}] 
In order to prove the result it suffices to treat the case when $B$ is a smooth connected curve, so we make this assumption. Further, by passing to hyperplane slices and applying the Lefschetz hyperplane theorem we may assume that $X \rightarrow B$ is a family of smooth proper curves. By removing an appropriate closed subset of $X$, it suffices for us to treat the case when $X\rightarrow B$ is a family of smooth affine curves, which induces a map $B \rightarrow \Mgn$ so as to obey the hypotheses in Propositions \ref{formalconstancy} and \ref{isomon}. 

 Let $b \in B$ denote any point and let $q$  denote a uniformizing parameter at $b$. By applying Proposition \ref{formalconstancy}, we deduce that the connection is constant modulo $q^m$, and that this is true for all $m$ and all $b'\in B$. We now choose a simply-connected analytic neighbourhood $U$ of $b \in B$. By Proposition \ref{isomon}, it follows that the monodromy of $(V,\nabla)$ restricted to any $X_{b'}$ for $b' \in U$ is independent of $b'$. As $U$ was an arbitrary simply connected open subset of $B$, it follows that the monodromy of $(V,\nabla)$ restricted to any $X_{b'}$ for $b' \in B$ is independent of $b'$. The result follows. 

\end{proof}

\begin{proof}[Proof of Theorem \ref{oneall}]
We assume that there exists $b\in B$ such that the monodromy of $(V,\nabla)$ restricted to $X_b$ is finite. Further, we may assume that $B$ is a smooth, connected curve and that $X\rightarrow B$ is a smooth family of affine curves as in the proof of Theorem \ref{isomonodromy}. We may also assume that the monodromy at the fiber $X_b$ is trivial, by passing to an appropriate finite cover of $X$. Let $q$ denote a uniformizing parameter for the point $b \in B$. Choose a simply-connected holomorphic neighbourhood $U \subset B(\C)$ of $b$ biholomorphic to the unit disk $\Delta$, and let $\C \{q\} = \mathcal{O}^{\hol}(U)$. 

Consider the monodromy representation $\rho: \pi_1(X_{b}) \rightarrow \GL_r(\C \{ q\})$ obtained after making standard choices as in the proof of Proposition \ref{isomon}. By Proposition \ref{formalconstancy}, we obtain that $\rho \mod q^m$ is isomorphic to $\rho \mod q$. However, we have that $\rho \mod q$ is the trivial representation (as we've assumed that the monodromy of the fiber $X_b$ is trivial), and so it follows that $\rho \mod q^m$ is trivial for all $m$. Therefore, it follows that $\rho $ itself is trivial, which yields the fact that the monodromy of $(V,\nabla)$ restricted to $X_{b'}$ is trivial for any $b' \in U$. As $U$ was arbitrary, it follows that the monodromy of $(V,\nabla)|_{b'}$ is trivial for every $b' \in B$. The theorem follows. 
\end{proof}

We are finally ready to deduce Theorem \ref{new} from Corollary \ref{goodred} and Theorem \ref{oneall}. 
\begin{proof}[Proof of Theorem \ref{new}]

Let $X \rightarrow B$ denote such a family, and suppose that $(V,\nabla)$ is a vector bundle on $X_{B^o} \rightarrow B^o$ with flat connection relative to $B^o$ with vanishing $p$-curvatures for almost all primes $p$. Without loss of generality, we may assume that $B$ is a curve. Let $b \in B$ denote a point such that $X_b = X_0$. The result follows from Theorem \ref{oneall} if $b \in B^o$ so we assume this is not the case. 

Corollary \ref{goodred} implies that $(V,\nabla)$ extends (analytically) to a vector bundle with connection $(V',\nabla')$ on $X_b$. By \cite{del}, $(V', \nabla')$, which is a-priori an analytic vector bundle with connection, has a canonical algebraic structure. 

    
First we claim:
\begin{claim}
Suppose that $(V',\nabla')$ on $X_b$ has finite monodromy. Then the theorem follows. 
\end{claim}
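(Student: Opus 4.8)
The plan is to propagate the finiteness of the monodromy of $(V',\nabla')$ on $X_b \cong X_0$ to a nearby fibre lying over $B^o$, and then spread it across all of $B^o$ by Theorem \ref{oneall}. Recall we have reduced to $B$ a smooth connected curve, $X \to B$ smooth projective, $(V,\nabla)$ defined on $X\times_B B^o$ with $p$-curvatures vanishing for almost all $p$, and $b \in B\setminus B^o$ with $X_b \cong X_0$. By the full statement of Corollary \ref{goodred}, the extension to $(V',\nabla')$ is in fact defined (analytically, with relative flat connection) on $X\times_B W$ for some \'etale neighbourhood $W\to B$ of $b$; choosing a point $\tilde b$ over $b$ and, by Ehresmann's theorem, a contractible analytic neighbourhood $\Delta\subset W^{an}$ of $\tilde b$ over which $X\times_B W\to W$ is $C^\infty$-trivial, we obtain an analytic vector bundle with relative flat connection on $X\times_B \Delta$ restricting to the original $(V,\nabla)$ away from the central fibre and to $(V',\nabla')$ on it.

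First I would record, for later use, that the $p$-curvatures of this extended connection still vanish for almost all $p$: away from the central fibre the connection is the algebraic $(V,\nabla)$, and on a model reduced mod $p$ the vanishing locus of the $p$-curvature is closed and contains a dense open, hence is everything, so vanishing persists across the central fibre. The essential topological point is that, since $\Delta$ is contractible and $X\times_B\Delta\to\Delta$ is $C^\infty$-trivial, for every $b'\in\Delta$ the inclusion $X_{b'}\hookrightarrow X\times_B\Delta$ induces an isomorphism of fundamental groups of the analytifications; hence the relative local system $\ker\nabla$ on $X\times_B\Delta$ amounts to a single representation of this group, whose restriction to each fibre $X_{b'}$ is, under the canonical identification, isomorphic to the monodromy of $(V',\nabla')$ on $X_b$. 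By hypothesis the latter is finite, so every fibre $X_{b'}$ with $b'\in\Delta$ has finite monodromy.

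Next I would pick $b'\in\Delta$ lying over a point $b''\in B^o$ (possible since $B^o$ is dense, so its preimage in $W$ is dense); then the genuine algebraic connection $(V,\nabla)|_{X_{b''}}$ has finite monodromy. Applying Theorem \ref{oneall} to the family $X\times_B B^o\to B^o$ with the algebraic $(V,\nabla)$ (whose $p$-curvatures vanish for almost all $p$), the finiteness of monodromy on the single fibre $X_{b''}$ forces finiteness of monodromy on every fibre over $B^o$, hence --- by the conclusion of Theorem \ref{oneall}, or a routine specialization argument --- on the generic fibre $X_\eta$ of $X\to B$. A flat connection on the smooth projective variety $X_\eta$ with finite monodromy becomes trivial on a finite \'etale cover, i.e.\ $(V,\nabla)|_{X_\eta}$ admits a full set of algebraic solutions; this is exactly the assertion of the $p$-curvature conjecture for $X_\eta$, so the theorem follows.

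The step I expect to be the main obstacle is the transition between the analytic extension produced by Corollary \ref{goodred} and the algebraic input required by Theorem \ref{oneall}: one must be sure that ``finite monodromy of the analytically-extended connection on $X_b$'' genuinely yields ``finite monodromy of the algebraic $(V,\nabla)$ on some closed fibre $X_{b''}$ over $B^o$''. The contractibility-and-$C^\infty$-triviality argument above is designed to bridge exactly this gap, but care is needed that the identification of fundamental groups is compatible with restriction of the local system $\ker\nabla$ to fibres and that shrinking $W^{an}$ to $\Delta$ retains a point over $b$; a secondary, routine point is passing from finite monodromy on all closed fibres over $B^o$ to finite monodromy on the generic fibre.
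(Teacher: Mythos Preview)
Your argument has a genuine gap at the step where you claim that ``the relative local system $\ker\nabla$ on $X\times_B\Delta$ amounts to a single representation of this group, whose restriction to each fibre $X_{b'}$ is \dots\ isomorphic to the monodromy of $(V',\nabla')$ on $X_b$.'' The connection $\nabla$ here is only \emph{relative} to $\Delta$, so $\ker\nabla$ is not a $\C$-local system on $X\times_B\Delta$; it is an $f$-local system in the sense of Section~\ref{familyhol}, i.e.\ locally isomorphic to $(f^{-1}\mathcal{O}_\Delta)^r$. What you obtain from it (Proposition~\ref{prop:ForwardCorrespondence}) is a representation $\rho:\pi_1(X_b)\to\GL_r(\mathcal{O}^{\hol}(\Delta))$, and its specializations $\rho_{b'}:\pi_1(X_b)\to\GL_r(\C)$ genuinely vary with $b'$. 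Contractibility of $\Delta$ and $C^\infty$-triviality of the family identify the fundamental groups, but say nothing about constancy of these representations; that constancy is exactly the content of isomonodromy (Theorem~\ref{isomonodromy}/\ref{oneall}), whose proof rests on the $p$-curvature input via Proposition~\ref{formalconstancy}. Your attempt to supply that input on the extension also fails: $(V',\nabla')$ over $\Delta$ is only analytic, so ``reducing mod $p$'' and invoking closedness of the $p$-curvature vanishing locus is not available. In short, you have not produced any fibre over $B^o$ with finite monodromy.

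The paper sidesteps the analytic/algebraic mismatch rather than bridging it. It replaces $X\to B$ by a Lefschetz curve slice $C\to B$ and invokes Theorem~\ref{goodredcurve}, which gives an \emph{algebraic} extension $(V_C,\nabla_C)$ over all of $C\to B$, including the fibre $C_b$. Since $\pi_1(C_b)\twoheadrightarrow\pi_1(X_b)$ and $(V_C,\nabla_C)|_{C_b}$ agrees analytically with $(V',\nabla')|_{C_b}$, the restriction to $C_b$ has finite monodromy. Now Theorem~\ref{oneall} applies directly to the algebraic family $(V_C,\nabla_C)$ on $C\to B$ with $b\in B$ in its domain, giving finite monodromy on every fibre and hence on the generic fibre. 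The point is that on curves one has an algebraic good-reduction statement (Theorem~\ref{goodredcurve}), whereas Corollary~\ref{goodred} only yields an analytic one; slicing down to a curve is what lets you feed genuine $p$-curvature data into Theorem~\ref{oneall} at the special point $b$.
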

\begin{proof}[Proof of Claim]
Let $C \rightarrow B$ be a family of curves obtained by slicing $X\rightarrow B$ as in the Lefschetz hyperplane theorem. Theorem \ref{goodredcurve} yields that $(V,\nabla)|_{C\times_B B^\circ}$ extends algebraically to a pair $(V_C,\nabla_C)$ on all of $C\rightarrow B$. By construction, we have $(V_C,\nabla_C)^{\hol}|_{C_b}$ is the same as $(V',\nabla')^{\hol}|_{C_b}$, and so the former also has finite monodromy. The claim now follows from Theorem \ref{oneall}.
\end{proof}

Therefore, it suffices to prove that $(V',\nabla)'$ has finite monodromy. 
By hypothesis, it suffices to prove that the $p$-curvatures of $(V',\nabla')$ vanish for almost all prime numbers. Note that  Theorem \ref{goodredcurve} implies that any subfamily of curves $C \rightarrow B \subset X \rightarrow B$ obtained by taking plane-slices has the property that family $(V_{C},\nabla_{C})$ is algebraic, and hence the $p$-curvatures of the resctriction of $(V',\nabla') $ to $C_b$ vanish for almost all primes. We conclude using the following claim:

\begin{claim}
Suppose that $Y$ is a smooth quasi-projective variety over $\overline{\F}_p$. Let $(V,\nabla)$ denote a vector bundle with flat connection on $Y$, such that the $p$-curvature of $(V,\nabla)$ restricted to $C$ vanishes for every smooth plane-section $C$ of $Y$. Then the $p$-curvature of $(V,\nabla)$ vanishes on all of $Y$.
\end{claim}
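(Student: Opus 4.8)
The plan is to reduce the claim to a pointwise statement about tangent vectors and then supply the geometric input by an elementary dimension count; in particular no global Bertini-type theorem is needed. Write $\psi$ for the $p$-curvature of $(V,\nabla)$, so that for each derivation $D$ defined on an open subset of $Y$ one has $\psi(D)=\nabla(D)^{p}-\nabla(D^{(p)})\in\mathcal{E}nd_{\Oo_{Y}}(V)$. I will use two standard properties of $\psi$: first, $\psi(fD)=f^{p}\psi(D)$ (which holds unconditionally), and second, $\psi(D_{1}+D_{2})=\psi(D_{1})+\psi(D_{2})$ (which uses flatness of $\nabla$: then $\nabla\colon\Der(\Oo_{Y})\to\mathcal{E}nd(V)$ is a homomorphism of Lie algebras, and the additivity follows by comparing Jacobson's formula in $\Der(\Oo_{Y})$ with Jacobson's formula in $\mathcal{E}nd(V)$). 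Writing $D=\sum f_{i}\partial_{i}$ near a closed point $y$ therefore gives $\psi(D)=\sum f_{i}^{p}\psi(\partial_{i})$, so the value $\psi(D)_{y}\in\mathrm{End}(V_{y})$ depends only on $\overline{D}\in T_{y}Y$ and defines an additive, Frobenius-semilinear map $\widetilde{\psi}_{y}\colon T_{y}Y\to\mathrm{End}(V_{y})$ with $\widetilde{\psi}_{y}(cv)=c^{p}\widetilde{\psi}_{y}(v)$. Since a nonzero map of vector bundles over $Y$ is nonzero at some closed point, $\psi=0$ if and only if $\widetilde{\psi}_{y}=0$ for every closed point $y$. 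Finally, for a smooth closed curve $j\colon C\hookrightarrow Y$ through $y$ one has $\widetilde{\psi}^{\,C}_{y}=\widetilde{\psi}_{y}|_{T_{y}C}$: a vector field on $C$ near $y$ extends to a vector field on $Y$ tangent to $C$, and the formula defining $\psi$ is built from such fields. By hypothesis this restriction vanishes for every smooth plane-section curve $C$ through $y$, and since $C$ is a curve, $T_{y}C$ is the tangent line of $C$ at $y$.

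It therefore suffices to produce, for every closed point $y\in Y$ and every nonzero $v\in T_{y}Y$, a smooth plane-section curve of $Y$ passing through $y$ with tangent line $\langle v\rangle$ at $y$; then $\widetilde{\psi}_{y}(v)=0$ for all such $v$, hence $\widetilde{\psi}_{y}=0$. Fix a locally closed embedding $Y\hookrightarrow\bP^{N}$, put $n=\dim Y$, and let $\ell\subset\bP^{N}$ be the line through $y$ in the direction $v$, so $\ell$ lies in the embedded tangent space $\overline{T}_{y}Y$. Consider the linear subspaces $L\subset\bP^{N}$ with $\dim L=N-n+1$ and $\ell\subset L$; these form a sub-Grassmannian, and a dimension count with the affine cones — using that $\overline{\F}_{p}$ is infinite — shows that for $L$ outside a proper closed subset one has $\overline{T}_{y}Y\cap L=\ell$. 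For such $L$ set $C_{L}=Y\cap L$. By Krull's height theorem, $L$ being cut out by $n-1$ linear forms, every component of $C_{L}$ through $y$ has dimension $\geq 1$; on the other hand the Zariski tangent space of $C_{L}$ at $y$ is contained in $T_{y}Y\cap T_{y}L\subset\overline{T}_{y}Y\cap L=\ell$, so it has dimension $\leq 1$. Hence $C_{L}$ is smooth of dimension $1$ at $y$, with tangent line exactly $\langle v\rangle$; replacing $C_{L}$ by the open subscheme of its smooth locus containing $y$ yields the required smooth plane-section curve. Running this over all closed points $y$ then gives $\psi=0$.

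The step I expect to demand the most care is the reduction in the first paragraph — the additivity of $p$-curvature in the derivation and its compatibility with restriction to smooth subvarieties — since this is exactly where flatness of $\nabla$ is used and it is what converts the geometric observation into a proof. By contrast the geometric step is deliberately soft: it uses only the dimension estimate for linear sections and never a globally smooth section, so the argument is untroubled by the failure of the moving Bertini theorem in characteristic $p$ — smoothness of $C_{L}$ is asserted, and needed, only on a Zariski neighbourhood of the marked point $y$.
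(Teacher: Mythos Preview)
Your proof is correct and takes a genuinely different route from the paper. The paper argues by induction on $\dim Y$: at a general point $Q$, Bertini (valid over the infinite field $\overline{\F}_p$) supplies globally smooth hyperplane sections $Z_1,\ldots,Z_d$ through $Q$ whose conormal directions span $\Omega_Y|_Q$; by the inductive hypothesis the $p$-curvature vanishes on each $Z_i$, and injectivity of the restriction $\Omega_Y|_Q\to\bigoplus_i\Omega_{Z_i}|_Q$ then forces $\Psi|_Q=0$, hence $\Psi=0$ everywhere. You instead bypass the induction entirely, using the $p$-linearity and additivity of $\psi$ (equivalently, that $\psi$ is a section of $\mathcal{E}nd(V)\otimes F^*\Omega_Y$) to reduce at once to a pointwise statement about single tangent directions, and then produce the required curves by a bare dimension count on linear subspaces containing a fixed tangent line. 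Your argument is more direct, works at every closed point rather than only general ones, and makes transparent the linear-algebra reason behind the paper's conormal-injectivity step; the paper's approach, by contrast, stays inside the class of globally smooth sections and so matches the hypothesis more literally.

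On that last point, one caveat is worth flagging. Your $C_L$ is only guaranteed to be smooth near $y$, and you invoke the hypothesis on its smooth locus. Whether that open subscheme of $Y\cap L$ qualifies as a ``smooth plane-section $C$ of $Y$'' is a matter of interpretation. In the paper's intended application the hypothesis is in fact available for such open pieces (the curve families there are freely replaced by affine opens before Theorem~\ref{goodredcurve} is applied), so your reading is consistent with the context. Under the stricter reading that $C$ must be a globally smooth $Y\cap L$, your argument as written would have a small gap---one cannot in general arrange global smoothness of $Y\cap L$ while prescribing a tangent line at a fixed point in characteristic $p$---and you would have to retreat to working at a general $y$ together with Bertini, which pushes you back toward the paper's approach.
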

\begin{proof}[Proof of Claim]
By induction on the dimension $d$ of $Y$, we may assume that the $p$-curvature of $(V,\nabla)$ pulled back to every smooth hyperplane section of $Y$ vanishes. 

Suppose $Q \in Y$ is a point and $Z \subset Y$ is a smooth divisor containing $Q$. Then we get the conormal exact sequence: 
\begin{align}\label{conormal}
    0 \to \mathcal{I_{Z}}/\mathcal{I_{Z}}^{2} \to \Omega_{Y}|_{Z} \to \Omega_{Z} \to 0
\end{align}

By \cite{katz}, the $p$-curvature $\Psi$ is a section of the coherent sheaf of $\Oo_{Y}$-modules $\Hom(V,V) \otimes (\Omega_{Y})^{(p)}$. Here $^(p)$ denotes the Frobenius twist of $\Omega_{Y}$. Furthermore, its restriction $\Psi_{Z} \in \Hom(V,V)|_{Z} \otimes_{\Oo_{Z}} (\Omega_{Y})^{(p)}|_{Z} $ agrees with the $p$-curvature of the restriction of the pair $(V,\nabla)$ to $Z$.

Now choose a point $Q \in Y$, and suppose $Z_{1}, ..., Z_{d}$ are smooth divisors such that the conormal vectors $\mathcal{I_{Z}}/\mathcal{I}_{Z_{i}}^{2}|_{Q} \in \Omega_{Y}|_{Q}$ form a basis.  Then the natural map of vector spaces 
\begin{align}\label{injective}
   \Omega_{Y}|_{Q} \to \bigoplus_{i=1}^{d}\Omega_{Z_{i}}|_{Q} 
\end{align}
is injective.

By Bertini's theorem, at a general point $Q \in Y$, there exists such a collection of divisors $Z_{i}$.  By induction, we may assume the $p$-curvatures of $(V|_{Z_{i}}, \nabla|_{Z_{i}})$ vanish, and hence by the injectivity of \ref{injective}, we conclude that $\Psi$ also vanishes. This concludes the proof.


\end{proof}

\end{proof}

\section{Proof of Theorem \ref{genuszero}}
 To aid the reader, we first give a sketch of the argument. There are three main inputs to our proof. We first use the genericity of the punctures to specialize to a nodal curve containing $\mathbb{P}^1 \setminus\{0,1,\infty\}$ as an irreducible component, and use Theorem \ref{goodredcurve} to prove that $(V,\nabla)$ extends to a vector bundle with connection on this irreducible component. By work of Katz \cite{katz2}, the $p$-curvature conjecture is known for rank two bundles\footnote{We thank H\'el\`ene Esnault for pointing out to us that every rank 2 vector bundle with connection on $\mathbb{P}^1 \setminus{0,1,\infty}$ is Hypergeometric.} on $\mathbb{P}^1 \setminus{0,1,\infty}$, and this is our second input. Finally, we use (a slight generalization of) the isomonodromy Theorem \ref{oneall} to deduce our result.






We work over a number field $k$. Let $f: C \to B$ be a flat family of genus $0$, $n$-punctured (marked) curves over a pointed affine curve $(B,0)$ with smooth
generic fiber and with special fiber $C_0 = f^{-1}(0)$ reduced and
nodal, corresponding to a map $B \to \overline{\mathcal{M}}_{0,n}$, the moduli space of stable $n$-pointed genus $0$ curves. Let $B^\circ$ denote the open set $B \setminus \{0\}$, assume that $C$ is smooth over $B^{\circ}$, and suppose that there exists 
 an irreducible component $E$ of the special fiber $C_0$ which contains exactly three special points (either punctures/marked points or nodes). We let $C^\circ \subset C$ denote the preimage of $B^{\circ}$ under $f$.
 
 Upon blowing down the remaining components $C_{0} \setminus E$, and after appropriately choosing three of the $n$-marked points/punctures to be at $0$, $1$, and $\infty$, we get an affine curve $A \subset (\mathbb{P}^{1} \setminus\{0,1,\infty\}) \times B$ which is the complement of $n$ sections $\sigma_{i}: B \to \mathbb{P}^{1}$ satisfying $\sigma_{i}(0) \in \{0,1,\infty\} \subset \mathbb{P}^1$ for all $i$.  (The first three sections are taken to be the constant sections $0,1,\infty$).  By construction, the affine curve $A$ is isomorphic to an open subset of $C$ under the blowdown map.  
 
  This general setup will be referred to consistently throughout the
  remainder of the section.

\begin{lemma}
  \label{lem:derivationexistence}
  Let $A$ be as above. Then there exists an everywhere non-zero $B$-derivation $D$ on $A$ such that $D^{p} \equiv D \mod p$
  for all primes $p$.
\end{lemma}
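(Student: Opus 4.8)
The claim is that on the affine curve $A \subset (\P^1\setminus\{0,1,\infty\})\times B$ — which is $\P^1_{\text{std}}\setminus(\{0,1,\infty\})$ bundled over $B$ minus some extra moving sections $\sigma_i$ — there is a nowhere-vanishing relative derivation $D$ with $D^p \equiv D \bmod p$ for almost all $p$. The model case to keep in mind is $B = \Spec k$ and $A = \P^1\setminus\{0,1,\infty\} = \Spec k[x,1/x,1/(x-1)]$, where $D = x(x-1)\,d/dx$ (or a suitable rescaling) does the job: it vanishes to order $1$ at each of $0,1,\infty$, hence is nowhere zero on $A$, and one checks $D^p = D$ in characteristic $p$ by the Fermat-type identity for the logarithmic vector field on $\P^1$ with three marked points. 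The whole point of the genericity reduction is that we have arranged three of the sections to be constant at $0,1,\infty$, so this model derivation is available fiberwise; the work is to account for the remaining sections $\sigma_4,\dots,\sigma_n$ and to get the mod-$p$ identity uniformly.

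**Plan.**
First I would recall the basic fact (this is essentially Remark \ref{notrestrictive}, and also \cite[proof of Lemma 3.3]{ananth} / \cite{katz2}) that on $\G_m = \Spec R[x,1/x]$ over any $\Z$-algebra $R$, the derivation $\delta = x\,d/dx$ satisfies $\delta^p \equiv \delta \bmod p$; this follows since $\delta(x^n) = n x^n$ and $n^p \equiv n \bmod p$. More generally $\delta = x\,d/dx$ on $\G_m$ pulls back, under the $n$-fold cover, but the cleanest statement is: for the standard coordinate on $\P^1$, any $k$-derivation of the form $D = g(x)\,d/dx$ with $g$ a polynomial of degree $\le 2$ vanishing simply at the marked points $0,1,\infty$ is, up to scalar, the canonical logarithmic vector field $v$ on $(\P^1,\{0,1,\infty\})$, and $v$ satisfies $v^p \equiv v$ because $v$ is the image of $x\partial_x$ under an automorphism and the identity is automorphism-equivariant. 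I would then handle $B$: choose a dominant map $A \to \A^1\times B$ — here simply the first projection $x$ (the coordinate on the $\P^1$ factor) works, since $A$ maps to $\P^1\setminus\{0,1,\infty\}$ — and set $D$ to be (a suitable $\Oo_B$-multiple of) the pullback of $x(x-1)\partial_x$. Because this vector field on $\P^1\times B$ is logarithmic along the constant divisors $\{0\},\{1\},\{\infty\}$, it restricts to a relative derivation of $A' := (\P^1\setminus\{0,1,\infty\})\times B$; to descend to $A = A'\setminus\bigcup_i\sigma_i(B)$ I would note that $A$ is an open subscheme of $A'$, so the restriction $D|_A$ is automatically a relative derivation — and it is nowhere zero on $A$ precisely because $x(x-1)\partial_x$ vanishes on $A'$ only along $\{0,1,\infty\}$, which has been removed. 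The mod-$p$ identity $D^p \equiv D \bmod p$ holds because it already holds fiberwise on $\P^1\setminus\{0,1,\infty\}$ (uniformly, for all $p$, by the computation above) and the derivation is pulled back from there, so the identity is inherited over $B$; this is where I would invoke that $B$ is defined over a number field to make sense of "almost all $p$" via spreading out, though in fact the identity here is uniform in $p$.

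**The main obstacle.**
The genuine subtlety is \emph{not} the mod-$p$ identity — that is formal once one uses the logarithmic vector field on $(\P^1,\{0,1,\infty\})$ — but rather ensuring $D$ is \emph{everywhere non-zero on $A$}, i.e.\ that removing the moving sections $\sigma_4,\dots,\sigma_n$ does not cause a problem, and conversely that $D$ does not acquire new zeros. Since $x(x-1)\partial_x$ has zeros on $\P^1\times B$ exactly along the three constant sections $\{0\},\{1\},\{\infty\}$ and these are removed in passing to $A$, there is in fact no issue: $D$ is nowhere vanishing on $A$. The only thing to be careful about is that $A$ is defined as an open subset of $C$ via the blowdown, so one must check the derivation written in the $x$-coordinate genuinely extends the relative tangent sheaf structure across any remaining locus — but this is exactly the content of Remark \ref{notrestrictive}, and I would simply cite it: pull back $x\,d/dx$ (equivalently $x(x-1)\,d/dx$) under the projection $A \to \A^1\times B$, which is the recipe given there. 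So the proof is short: construct $D$ as the pullback of the logarithmic vector field, observe nowhere-vanishing on $A$ from the placement of the three constant sections, and deduce $D^p\equiv D\bmod p$ from the model $\G_m$ computation.
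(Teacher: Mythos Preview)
Your approach is essentially the paper's, but the paper's proof is a single line: take $D = x\,d/dx$ on $A \subset \A^1 \times B$, where $x$ is the $\A^1$-coordinate. One harmless inaccuracy in your write-up: the vector field $x(x-1)\partial_x$ does \emph{not} vanish at $\infty$ (in the coordinate $y = 1/x$ it becomes $(y-1)\partial_y$), so there is no global ``canonical logarithmic vector field'' on $(\P^1,\{0,1,\infty\})$ --- the log tangent bundle there has degree $-1$; nonetheless your $D$ is M\"obius-conjugate to $-u\,\partial_u$, so the identity $D^p \equiv D$ does hold as you claim.
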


\begin{proof}
The derivation $D = x \frac{d}{dx}$ on $A \subset \A^{1} \times B$ works, where $x$ denotes the coordinate on $\A^{1}$.
\end{proof}

Suppose $Y \to B$ is a smooth map of complex varieties, and suppose $b \in B$ is a point sufficiently close to $0 \in B$.  Then there is a map $\tau: \pi_{1}(Y_{0}) \to \pi_{1}(Y_{b})$, well-defined up to choosing base-points, obtained by parallel transport of loops.  

In the context of the family $A \to B$, if we bound the three punctures in $E$ with small loops, the resulting pair of pants $U_{0} \subset E$ can be extended (by Ehresmann's theorem) to a smooth family of pairs of pants $U \to N$ over a small analytic disk $N$ around $0$.  We denote by $U_b \subset A_b$ the fiber of $U$ over $b \in B$. We let $P \subset U$ denote the interior. For each $b \in N$, the fiber $P_b \subset U_b$ is the interior of the pair of pants $U_{b}$.

\begin{theorem}[Bootstrapping]
  \label{thm:bootstrap}
 Suppose $(V,\nabla)$ is a vector bundle with flat connection on $A$ relative to $B$ whose $p$-curvatures vanish for almost all $p$.  
  If the restriction $(V,\nabla)|_{E}$ has finite monodromy, then 
  $\tau(\pi_{1}(E)) = \pi_{1}(P_{b}) \subset \pi_{1}(A_{b})$ has finite image in $\GL_{r}\C$
  under the monodromy representation of $(V,\nabla)|_{A_{b}}$ for $b \in B$ sufficiently close to $0$. Here, $P_b$ is as in Figure \ref{fig6}.
\end{theorem}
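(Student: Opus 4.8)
The plan is to reduce the assertion to the finiteness of a holomorphic family of monodromy representations over a disc, and then to pin that family down using Proposition \ref{formalconstancy}. Write $\rho_{A,b}\colon\pi_1(A_b)\to\GL_r(\C)$ for the monodromy representation of $(V,\nabla)|_{A_b}$; we must show $\rho_{A,b}$ has finite image on $\tau(\pi_1(E))=\pi_1(P_b)$. First I would arrange the pair of pants family $U\to N$ to be holomorphic: take $U_b\subset A_b$ to be the complement in $\P^1$ of fixed small closed discs around $0,1,\infty$, which for $b$ close to $0$ is a pair of pants bounding the three clusters of punctures of $A_b$ colliding at $\{0,1,\infty\}$. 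Then $P\to N$ is simultaneously a holomorphic submersion and a locally trivial topological fibre bundle, with an obvious section (say the constant section $x=\tfrac12$) landing in $P$. Since $P_0\hookrightarrow A_0=E$ is a homotopy equivalence, $\pi_1(P_0)\cong\pi_1(E)$, and $\tau$ restricts to the canonical isomorphism $\pi_1(P_0)\xrightarrow{\sim}\pi_1(P_b)$ coming from the fibration. Using the Cauchy--Kowalewski theorem (Lemma \ref{Cauchy-Kowalewski}), the relative solution sheaf $\ker\nabla$ is an $f$-local system on $A|_N$; restricting it to $P$ and invoking the correspondence of \S\ref{familyhol} gives a holomorphic family of monodromy representations
\[
\rho\colon\pi_1(P_0)\longrightarrow\GL_r(\C\{q\}),\qquad\C\{q\}=\Oo^{\hol}(N),
\]
whose value at $q=0$ is the monodromy representation of $(V,\nabla)|_E$ and whose value at $q=b$ is, under the identifications above, $\rho_{A,b}|_{\pi_1(P_b)}$. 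It then suffices to prove that $\rho$ has finite image, since $\rho_{A,b}(\pi_1(P_b))$ is then the image of a finite group under the evaluation homomorphism $\GL_r(\C\{q\})\to\GL_r(\C)$.

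Next I would apply Proposition \ref{formalconstancy} to the formal completion $\widehat A=A\times_B\Spec\widehat\Oo_{B,0}\to\Spec k[[q]]$, a smooth affine curve with special fibre $A_0\cong\P^1\setminus\{0,1,\infty\}$. Here $V|_{\widehat A}$ is trivial (every bundle on the genus zero affine curve $A_0$ is trivial, and bundles on $\widehat A$ are determined by their restriction to $A_0$ by $q$-adic completeness), a suitable derivation $D=x\,d/dx$ exists by Lemma \ref{lem:derivationexistence}, and the $p$-curvatures vanish for almost all $p$; crucially, the hypothesis that $(V,\nabla)|_E$ has finite monodromy is precisely the existence of a full set of algebraic solutions modulo $q$, so hypothesis (1) of Proposition \ref{formalconstancy} is met. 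It follows that for every $m\ge1$ there is an isomorphism of connections relative to the base $(V,\nabla)|_{\widehat A\times_{k[[q]]}\Spec(k[[q]]/q^m)}\cong(V,\nabla)|_{A_0}\times\Spec(k[[q]]/q^m)$.

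Base changing to $\C$ and restricting to the $m$-jet of $N$ at $0$, the previous step shows that $\rho\bmod q^m$ is isomorphic, as a $\GL_r(\C[q]/q^m)$-valued representation, to the constant representation $\rho_0:=\rho\bmod q$, which has finite image $G$, and this holds for every $m$. Running the iteration in the proof of Proposition \ref{isomon} — whose internal claim improves an isomorphism mod $q^m$ to one mod $q^{m+1}$ by conjugating by a matrix of the form $I+q^mM$ — produces a single $g\in\GL_r(\C[[q]])$ with $g^{-1}\rho g=\rho_0$, so that $\rho(\pi_1(P_0))=gGg^{-1}$ is finite. Together with the first paragraph this completes the argument.

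The arithmetic input is packaged entirely in Proposition \ref{formalconstancy}; the substance of the proof is the transition in the last step from \emph{formal} constancy at $0$ to \emph{analytic} finiteness of $\rho$ over a genuine disc, together with the bookkeeping in the first step — arranging $U\to N$ to be holomorphic, choosing a section landing in $P$, and, most delicately, checking that even though $A\to B$ is \emph{not} topologically locally trivial at $0$ (the fibres $A_0$ and $A_b$ are not homeomorphic) its restriction to the pair of pants family $P\to N$ \emph{is} locally trivial, so that the monodromy formalism of \S\ref{familyhol} applies there. The triviality of $V$ on $\widehat A$, needed to invoke Proposition \ref{formalconstancy}, is the single point at which the genus zero hypothesis on $E$ is used.
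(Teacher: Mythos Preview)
Your proof is correct and follows essentially the same route as the paper: set up the holomorphic family of monodromy representations $\rho:\pi_1(E)\to\GL_r(\C\{q\})$ over the disc $N$, apply Proposition~\ref{formalconstancy} (using that $(V,\nabla)|_E$ has finite monodromy to verify hypothesis~(1)) to obtain formal constancy of the connection modulo every $q^m$, and conclude finiteness of the image of $\rho$. The only difference is in the endgame: the paper observes directly that $\ker(\rho\bmod q^m)=\ker\rho_0$ for all $m$ forces $\ker\rho_0\subset\ker\rho$ (since an element of $\C\{q\}$ vanishing modulo every $q^m$ is zero), whereas you take the slightly longer path of iterating the claim inside Proposition~\ref{isomon} to produce a formal conjugacy $g\in\GL_r(\C[[q]])$ between $\rho$ and $\rho_0$---both arguments are valid and yield the same conclusion.
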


\begin{proof}
\begin{figure}[ht]
    \centering
    \includegraphics{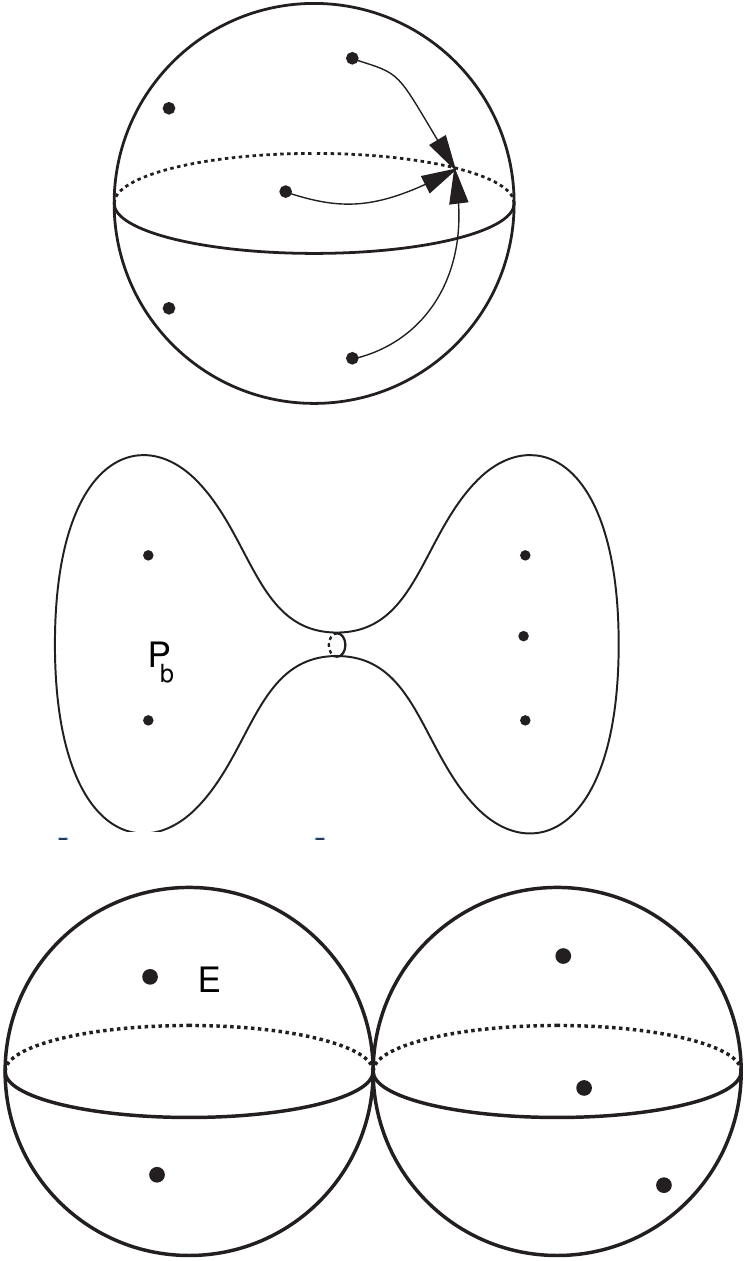}
    \caption{Degeneration of smooth genus zero curves to a nodal curve}
    \label{fig6}
\end{figure}



Let $q$ denote a local coordinate on $B = \Spec R$ cutting out $0$, and let $N \subset B(\C)$ denote a small analytic open neighbourhood of $0$ as in the discussion immediately prior to the theorem. Furthermore, let $P \rightarrow N$ denote the holomorphic family of (interiors of) pairs of pants and let $P_b$ be the fiber of $P$ over $b \in N$, as depicted in Figure \ref{fig6}.

It suffices to prove that the monodromy of $(V,\nabla)$ restricted to any fiber of $P \to N$ is finite. The  argument used in the proof of Proposition \ref{formalconstancy} (and also the fact that $A \times_{\Spec R} \Spec R/q^m$ is the trivial infinitesimal deformation of $A \times_{\Spec R} \Spec R/q$) yields that $(V,\nabla)|_{A/q^m}$ is isomorphic to the trivial family $(V,\nabla)_0 \times \Spec R/q^m$. Note that $P$ is an open subset of $A^{\hol}$. Therefore,  there exists a basis for $V^{\hol}|_P$ with respect to which the connection is constant (in $q$) modulo $q^m$. Now, consider the holomorphic family of monodromy representations: $\rho: \pi_1(E) \rightarrow \GL_r(\mathcal{O}^{\hol}(N))$ corresponding to $(V,\nabla)|_{P}$. As the connection is isomorphic to a constant connection mod $q^m$, it follows that the kernel of $\rho$ mod $q^m$ is independent of $m$. We now claim that the kernel of $\rho$ is the same as the kernel of $\rho$ mod $q$. Let $\alpha \in \pi_1(E)$ be in the kernel of $\rho $ mod $q$. Then, $\alpha$ is in the kernel mod $q^m$ for all $m$, and so $\rho(\alpha)$ is the identity element, as required. The result follows, as we have assumed that the kernel of $\rho$ mod $q$ is finite index in $\pi_1(E)$. 
%
%
%
\end{proof}

We are now ready to put the above results together to prove Theorem \ref{genuszero}. 
\begin{proof}[Proof of Theorem \ref{genuszero}]
Let $\mathcal{M}_{0,n}$ denote the (fine) moduli space of genus zero curves with $n$ marked points, with $n \geq 4$. We will prove our result for \emph{the} generic family of curves over $\mathcal{M}_{0,n}$. The same arguments in \cite{ananth} go through to reduce the case of \emph{a} generic curve to the case of \emph{the} generic curve. Let $\overline{\mathcal{M}_{0,n}}$ denote the Deligne-Mumford compactification of $\mathcal{M}_{0,n}$, which is a projective variety. There are three different families of ``pairs of pants'' inside $C(\C)$, where $C$ is a genus 0 curve with $n$ punctures: 
\begin{enumerate}
\item Pick any simple closed loop $\gamma \subset C(\C)$ as in Figure \ref{fig6}, such that the complement of $\gamma$ consists of 2 disks, the first containing two of the marked points and the second, containing the remaining $n-2$ marked points. The disc containing two of the marked points is a pair of pants contained inside $C(\C)$, and can be realised by approaching a divisorial boundary component of $\mathcal{M}_{0,n}$. We say this type of pair of pants is of type $P$.\footnote{We thank Joe Harris and Ian Morrison for allowing us to use Figure 2, which can be found in their book {\sl Moduli of Curves}.}
    
\item Fix one of the marked points, and partition the remaining $n-1$ points into two non-empty sets containing $a$ and $b = n-1-a$ points respectively. Let $\gamma_a \subset C(\C)$ denote a simple closed loop bounding the set of $a$ marked points, and let $\gamma_b$ denote the analogous simple closed loop. The complement of $\gamma_a$ and $\gamma_b$ consists of two discs (containing $a$ and $b$ marked points respectively), and a pair of pants. The pair of pants can be realised by approaching a suitable codimension-2 locus in the boundary of $\overline{\mathcal{M}}_{0,n}$. We say such a pair of pants is of type $P_{a,b}$.
    
\item Partition the $n$ points into three nonempty sets containing $a$, $b$ and $c = n -a-b$ points respectively. Let $\gamma_a \subset C(\C)$ denote a simple closed loop which bounds the first $a$ points, and let $\gamma_b,\gamma_c$ denote the analogous simple closed loops. The complement of these three loops equals the union of a pair of pants and three discs (with $a$,$b$ and $c$) punctures respectively. The pair of pants can be realised by approaching a suitable codimension-3 locus in the boundary of $\overline{\mathcal{M}}_{0,n}$. We say such a pair of pants is of type $P_{a,b,c}$.
\end{enumerate}
For more details about $\overline{\mathcal{M}}_{0,n}$, see \cite[Chapter 3, Section G]{HM}. We now blow $\overline{\mathcal{M}}_{0,n}$ up at all the codimension 2 and codimension 3 boundary points considered just above. Let $\overline{\mathcal{M}}$ denote this blown-up scheme; $\overline{\mathcal{M}}$ is still projective, and we fix a projective embedding. 

Let $B$ denote a general one-dimensional curvi-linear section of $\overline{\mathcal{M}}$. We may assume that $B$ is irreducible, is defined over a number field, and also that the map $\pi_1(B \cap \mathcal{M}_{0,n} (\C)) \rightarrow \pi_1(\mathcal{M}_{0,n}(\C)) $ is surjective (by the quasi-projective Lefschetz theorem). We also have that $B$ intersects every divisorial boundary component of $\overline{\mathcal{M}}$. 

Fix any type of pairs of pants $T$, where $T$ either equals $P$,  $P_{a,b}$ (for a fixed pair of integers $a,b$ with $a +b = n-1$), or $P_{a,b,c}$ (with $a+b+c=n$). By \cite[Page 37]{fm}, the action of $\pi_1(M(\C))$ (by parallel transport) on a fiber over $M$ is transitive (up to isotopy) on pairs of pants of type $T$, and thus the same is true about the action of $\pi_1(B \cap M(\C))$. In sum, given any pair of pants $P$ contained in a smooth fiber $C_b$, there exists a path contained in $B(\C)$ connecting $b$ to an appropriate boundary point $0 \in B$ such that the pair of pants deforms to $\mathbb{P}^1 \setminus\{0,1,\infty\} \subset C_0$. We now apply Theorem \ref{thm:bootstrap}, noting that the $p$-curvature conjecture is known (by work of Katz) for rank 2 vector bundles on $\mathbb{P}^1 \setminus\{0,1,\infty\}$. The theorem follows.
\end{proof}

\section{Complements of hypersurfaces in projective space}
\label{sec:complements}

Theorem \ref{curve} can be used to prove more cases of the
$p$-curvature for rank $2$ connections on specific varieties defined over a
number field. We begin with a summary of the basic strategy, and then
provide the new examples. We work over $\overline{\mathbb{Q}}$.  We denote by $[\mathcal{M}_{g,n}/S_n]$ the stack quotient, which parametrizes genus $g$ curves with a marked set of un-ordered $n$ points.

\begin{corollary}
  \label{strategy}
  Suppose $X$ is a smooth variety, $f: C \to B$ is a family of genus
  $g$, $n$-punctured curves, and $p : C \to X$ is a map
  satisfying the following properties:
  \begin{enumerate}
  \item For a general fiber $C_b$, the induced map on fundamental
    groups $$p_{*}:\pi_{1}C_b \to \pi_{1}X$$ is surjective,
  \item The induced map to moduli $B \to [\mathcal{M}_{g,n} / S_{n}]$
    is dominant.
  \end{enumerate}
  Then the $p$-curvature conjecture holds for rank $2$ connections on
  $X$.
\end{corollary}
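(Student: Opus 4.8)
The plan is to obtain this as a direct corollary of Theorem~\ref{curve}: pull $(V,\nabla)$ back to a generic fiber of the family $C\to B$, where it acquires finite monodromy by Theorem~\ref{curve}, and then transport that finiteness to $\pi_1 X$ using the surjectivity hypothesis~(1). Concretely, let $(V,\nabla)$ be a rank two vector bundle with integrable connection on $X$ whose $p$-curvatures vanish for almost all primes $p$; we must show that its monodromy representation $\rho\colon \pi_1 X \to \GL_2(\C)$ has finite image, equivalently that $(V,\nabla)$ trivializes on a finite \'etale cover of $X$.

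First I would reduce to the case of ordered marked points. The quotient map $\Mgn \to [\Mgn/S_n]$ is finite \'etale, so after replacing $B$ by (a component of) the base change $B\times_{[\Mgn/S_n]}\Mgn$ and pulling the family and the map $p$ back accordingly, we may assume that the induced map $B\to\Mgn$ is dominant; hypotheses~(1) and~(2) are preserved because the fibers of the family are unchanged. Next I would choose the fiber over which to work: by hypothesis~(2) the generic fiber $C_\eta$ of $C\to B$ --- equivalently, a very general closed fiber over $\C$ --- is a generic curve of genus $g$ with $n$ punctures in the sense of \S\ref{sect:1.1} (its function field embeds in $\C$ and, $B\to\Mgn$ being dominant, a finitely generated model maps dominantly to moduli). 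I would then form the pullback $(p|_{C_\eta})^{*}(V,\nabla)$ along the composite $C_\eta \hookrightarrow C \xrightarrow{p} X$; since the $p$-curvature is functorial under pullback, this rank two connection on the generic curve $C_\eta$ again has vanishing $p$-curvature for almost all $p$.

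By Theorem~\ref{curve}, $(p|_{C_\eta})^{*}(V,\nabla)$ therefore has finite monodromy, so the representation $\pi_1(C_\eta^{an})\to\GL_2(\C)$ attached to it has finite image. This representation is nothing but
\[
\pi_1(C_\eta^{an}) \xrightarrow{\ (p|_{C_\eta})_{*}\ } \pi_1 X \xrightarrow{\ \rho\ } \GL_2(\C),
\]
and $(p|_{C_\eta})_{*}$ is surjective by hypothesis~(1) --- this uses that $\pi_1$-surjectivity is an open condition on the fiber, so it holds for the generic fiber once it holds for a general one (alternatively, one simply arranges~(1) for the particular fiber one pulls back to). Surjectivity forces the image of $\rho$ to equal the finite image of the above composite, which finishes the argument.

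I do not anticipate a genuine obstacle: the whole arithmetic--topological content sits inside Theorem~\ref{curve}. The only steps requiring care are bookkeeping: verifying that, after the $S_n$-cover reduction, the chosen fiber is still ``generic'' in the technical sense (so that a finitely generated model over $\C$ maps dominantly to $\Mgn$), and checking that hypothesis~(1) can be invoked for the generic fiber rather than merely a general closed one. It is also worth noting that no semisimplicity assumption enters, since Theorem~\ref{curve} holds unconditionally for rank two connections on generic curves.
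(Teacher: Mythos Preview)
Your proposal is correct and follows essentially the same approach as the paper: pull back $(V,\nabla)$ along $p$, restrict to a general fiber which is generic by hypothesis~(2), apply Theorem~\ref{curve}, and conclude via the $\pi_1$-surjectivity of hypothesis~(1). Your reduction from $[\Mgn/S_n]$ to $\Mgn$ via the finite \'etale cover is a reasonable bit of extra bookkeeping that the paper leaves implicit.
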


\begin{proof}
  If $(V, \nabla)$ is a vector bundle with connection on $X$ whose
  $p$-curvatures vanish for almost all $p$, then the pullback
  $(p^{*}V, p^{*}\nabla)$ induces a vector bundle with connection
  relative to $B$ with the same $p$-curvature vanishing condition.
  Since (by condition $(2)$) the map to moduli
  $B \to [\mathcal{M}_{g,n}/ S_{n}]$ is dominant, a general fiber
  $C_b$ is a generic curve, and therefore Theorem \ref{curve} applies.
  Hence, $(p^{*}V,p^{*}\nabla)|_{C_{b}}$ has finite monodromy, which
  (by condition $(1)$) implies that $(V,\nabla)$ also has finite
  monodromy.
\end{proof}

\begin{remark}
The simplest way to produce families $B$ which satisfy condition $(1)$
in Corollary \ref{strategy} is to take a family of ample complete-intersection curves on
$X$. However, such families will typically fail to yield a dominant
map $B \to \mathcal{M}_{g,n}$ once the genus $g$ is large, since the moduli spaces
$\mathcal{M}_{g,n}$ usually are not unirational. Therefore,
the strategy is particularly promising when $g=0$.
\end{remark}

  To obtain a large collection of examples $X$ for which our strategy
  applies, we consider $$X = \mathbb{P}^{r} \setminus Y$$ where
  $Y$ is a degree $n \leq 2r+1$ hypersurface.  The family of curves
  $B$ is the open subset of the Grassmannian of lines in
  $\mathbb{P}^{r}$ which meet $Y$ transversely.  For a general
  hypersurface $Y$ of degree $n \leq 2r+1$, the induced map to moduli
  $B \to \mathcal{M}_{0,n}$ will be dominant, though it is a
  non-trivial problem to determine explicit, independently verifiable
  conditions on $Y$ which guarantee this dominance.
  
  \subsection{A specific class of examples}
  
  In what follows, let $Y \subset \mathbb{P}^{2}$ be a quintic curve.  
  If $Z$ is a degree $d$ plane curve, a {\sl simple tangent} of $Z$ is
  a line which meets $Z$ at precisely $n-1$ smooth points of $Z$.
  
\begin{proposition}
  \label{lemma:dominancecriteria}
  Suppose $Y' \subset \mathbb{P}^{2}$ is an irreducible component of $Y$ such that the family
  of four-pointed lines given by the simple tangent lines $T_p(Y')$ for
  general $p\in Y'$ has varying cross-ratios.  Then the induced map
  to moduli $\mathbb{P}^{2*} \dashrightarrow [\mathcal{M}_{0,5}/S_5]$
  is dominant. In particular, if there exists a line $\ell$ meeting
  $Y$ (set-theoretically) at exactly three points which is a limit of
  simple tangent lines $T_p(Y')$, then the map to moduli is
  dominant.
\end{proposition}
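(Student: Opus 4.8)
The plan is to track how the cross-ratio of the four intersection points varies as the line $\ell$ moves in $\mathbb{P}^{2*}$, and to show that generic variation of this cross-ratio along the one-parameter family of simple tangent lines $T_p(Y')$ already forces dominance onto $[\mathcal{M}_{0,5}/S_5]$. Recall that a point of $\mathcal{M}_{0,5}$ (forgetting the labelling, hence the $S_5$-quotient) is determined by the configuration of the $5$ marked points on $\mathbb{P}^1$; for a line $\ell$ meeting $Y$ transversely in $5$ points, these $5$ points give the marking of $\ell \cong \mathbb{P}^1$, and the moduli map sends $[\ell]$ to this configuration. A configuration of $5$ points on $\mathbb{P}^1$ is recorded (up to labelling) by its collection of cross-ratios, and since $\mathcal{M}_{0,5}$ is a surface, it suffices to exhibit a $2$-parameter family of lines along which the moduli invariant varies maximally — equivalently, to show the differential of the moduli map has rank $2$ at a general point.

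First I would reduce to a rank computation. Near a general line $\ell_0$ meeting $Y$ transversely, the $5$ intersection points depend analytically on $\ell \in \mathbb{P}^{2*}$, so the moduli map $m: \mathbb{P}^{2*} \dashrightarrow [\mathcal{M}_{0,5}/S_5]$ is well-defined and holomorphic on a Zariski-open set; dominance is equivalent to $dm$ having rank $2$ somewhere. Now restrict attention to the pencil of lines through a general point $p \in Y'$: as the line rotates about $p$ it sweeps a $\mathbb{P}^1$ in $\mathbb{P}^{2*}$, and one of the five marked points (the one at $p$) stays fixed while the other four move. The hypothesis says that along the sub-locus of \emph{simple tangent} lines $T_p(Y')$ — a curve in $\mathbb{P}^{2*}$ as $p$ varies over $Y'$ — the cross-ratio of the four non-tangency points is non-constant. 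This gives one direction in which $m$ varies. To get the second independent direction, I would vary $p$ transversally, or equivalently perturb a simple tangent line to a nearby transverse line: this moves the former double point into two distinct simple points, changing the \emph{combinatorial type} of the configuration and in particular changing a cross-ratio that was degenerate (equal to $0$, $1$, or $\infty$) along the tangent locus into a generic value. Since the first variation keeps us inside (the closure of) a stratum while the second variation escapes it, the two tangent directions to the image are linearly independent, so $dm$ has rank $2$ and $m$ is dominant.

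For the final ``in particular'' clause: if a line $\ell$ meets $Y$ set-theoretically at exactly three points and is a limit of simple tangent lines $T_p(Y')$, then $\ell$ corresponds to a boundary point of $\overline{\mathcal{M}}_{0,5}$ (two of the five marked points have collided at a tangency, and a further collision or a triple structure accounts for meeting at only three points), and the simple tangent lines limiting to $\ell$ trace a curve in $\mathbb{P}^{2*}$ whose image under $m$ is a curve limiting to that boundary point; because the boundary stratum containing $[\ell]$ has the property that any curve through it transverse to the stratum, together with the variation within the stratum, spans the tangent space, one again concludes that $m$ is dominant. Concretely, I would argue that the existence of such an $\ell$ forces the cross-ratios along $T_p(Y')$ to be non-constant — otherwise the family $T_p(Y')$ would be ``isotrivial'' and could not degenerate to a line of strictly smaller intersection cardinality — thereby reducing the particular case to the general statement already proved.

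The main obstacle I anticipate is making precise and rigorous the claim that the two variations (moving within the simple-tangent locus versus perturbing off it) are genuinely independent in $T_{[\ell]}[\mathcal{M}_{0,5}/S_5]$, rather than merely distinct. This requires a careful local-coordinates computation on $\overline{\mathcal{M}}_{0,5}$ near a boundary point (or near a general interior point for the first half), controlling how the collision of two marked points interacts with the motion of the remaining points; the $S_5$-quotient must also be handled so that one is comparing unordered configurations correctly, which means checking that the relevant cross-ratios are not accidentally identified by the symmetric-group action. A clean way to package this is to use the fact that $\overline{\mathcal{M}}_{0,5}$ is the blow-up of $\mathbb{P}^2$ (or $\mathbb{P}^1 \times \mathbb{P}^1$) at finitely many points and to compute the moduli map in those explicit coordinates, but I would expect the verification of the rank-$2$ condition to be where the real work lies.
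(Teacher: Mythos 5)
The paper declares its own proof omitted, so there is nothing to compare against; here is an assessment of your argument on its merits. The approach is sound and captures the correct geometry: along the dual curve of $Y'$ the moduli point moves one-dimensionally inside a boundary divisor of $[\overline{\mathcal{M}}_{0,5}/S_5]$ (the varying-cross-ratio hypothesis), and perturbing transversally to the dual curve smooths the tangency and escapes the boundary. One small slip: a simple tangent $T_p(Y')$ meets $Y$ in four points, one of which is the tangency point and three of which are transverse, so the four-pointed line is ``tangency point plus three transverse points,'' not ``four non-tangency points.'' The obstacle you flag is real but surmountable. The rational map $\mathbb{P}^{2*}\dashrightarrow\overline{\mathcal{M}}_{0,5}$ does \emph{not} extend as a morphism across a general point of the dual curve---the separation of the colliding pair goes as a square root of the transverse parameter---but the composite to $[\overline{\mathcal{M}}_{0,5}/S_5]$ does, because the transposition swapping the two colliding labels acts by $-1$ on the normal direction to the boundary divisor, so the transverse coordinate on the quotient is effectively the square of the separation and pulls back with a simple zero along the dual curve. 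Carrying out that local computation is what makes your rank-two claim rigorous.

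An alternative that sidesteps the differential entirely: resolve indeterminacy and let $Z$ be the closure of the image in $[\overline{\mathcal{M}}_{0,5}/S_5]$. It is irreducible and meets the interior (a general line is transverse to $Y$), and by hypothesis $Z\cap\partial$ contains the one-dimensional image of the dual curve. If $\dim Z=1$ then $Z\subset\partial$, contradicting that $Z$ meets the interior; hence $\dim Z=2$ and the map is dominant. Finally, your reduction of the ``in particular'' clause is correct as stated: a constant cross-ratio would have to be a non-degenerate value in $\mathbb{P}^1\setminus\{0,1,\infty\}$, which is incompatible with the four-point configuration colliding down to three in the limit $\ell$.
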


\begin{proof}
  The proof is omitted.
\end{proof}

For a typical example of a line $\ell$ as in the statement of Proposition \ref{lemma:dominancecriteria}, one can take the cuspidal tangent of an ordinary cusp on $Y$, provided it meets $Y$ at two other points.  One can also use a simple bi-tangent line.
  
\begin{corollary}\label{cor:specific}
  Suppose $Y \subset \P^{2}$ obeys the hypothesis in Proposition \ref{lemma:dominancecriteria}.  Then the $p$-curvature conjecture holds for rank $2$ connections on $X = \P^{2} \setminus Y$.
\end{corollary}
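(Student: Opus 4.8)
The plan is to deduce Corollary \ref{cor:specific} directly from Corollary \ref{strategy} by exhibiting the family of lines meeting $Y$ transversely as a family of curves satisfying hypotheses (1) and (2) of that corollary. Concretely, set $X = \mathbb{P}^2 \setminus Y$ and let $B \subset \mathbb{P}^{2*}$ be the open subscheme parametrizing lines $\ell$ meeting $Y$ transversely in exactly $n=5$ points. Over $B$ we have the incidence family $C \to B$ whose fiber over $[\ell]$ is $\ell \setminus (\ell \cap Y)$, a five-times-punctured genus zero curve, together with the tautological map $p : C \to X$ sending a point of $\ell \setminus Y$ to itself in $\mathbb{P}^2 \setminus Y$.

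First I would verify hypothesis (1): for a general line $\ell$, the inclusion $\ell \setminus Y \hookrightarrow \mathbb{P}^2 \setminus Y$ induces a surjection $\pi_1(\ell \setminus Y) \to \pi_1(\mathbb{P}^2 \setminus Y)$. This is a standard consequence of the Zariski--Lefschetz hyperplane theorem for complements of hypersurfaces in projective space: a general line (indeed a general hyperplane) section of a quasi-projective variety induces a surjection on $\pi_1$, since $\mathbb{P}^2 \setminus Y$ is obtained from a neighbourhood of the generic line section by attaching cells of dimension $\geq 2$. Next I would verify hypothesis (2): the induced map to moduli $B \to \mathcal{M}_{0,5}$ (equivalently to $[\mathcal{M}_{0,5}/S_5]$, since the five punctures $\ell \cap Y$ are unordered) is dominant. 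This is exactly the content of Proposition \ref{lemma:dominancecriteria}: the hypothesis on $Y$ (the existence of a line $\ell$ meeting $Y$ set-theoretically at exactly three points which is a limit of simple tangent lines, or more generally the varying-cross-ratio condition on simple tangents to an irreducible component) guarantees that $\mathbb{P}^{2*} \dashrightarrow [\mathcal{M}_{0,5}/S_5]$ is dominant, hence so is its restriction to the open set $B$ of transverse lines.

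With (1) and (2) in hand, Corollary \ref{strategy} applies verbatim and yields the $p$-curvature conjecture for rank $2$ connections on $X = \mathbb{P}^2 \setminus Y$, completing the proof. The only genuinely substantive point is hypothesis (2), which is why it has been isolated as Proposition \ref{lemma:dominancecriteria}; given that proposition the present corollary is a formal consequence. I do not expect any serious obstacle here beyond bookkeeping: one should take care that $B$ is nonempty and defined over a number field (automatic, since $Y$ and the Bertini-general line are), that the punctures are handled as an unordered set so that one lands in the stack quotient $[\mathcal{M}_{0,5}/S_5]$ rather than $\mathcal{M}_{0,5}$, and that ``general fiber'' in hypothesis (1) of Corollary \ref{strategy} is compatible with the open locus of transverse lines — all of which are routine.
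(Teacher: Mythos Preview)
Your proposal is correct and follows exactly the paper's approach: the paper's proof is the single sentence ``This is an immediate consequence of Corollary \ref{strategy},'' with the family of transverse lines, the Lefschetz surjectivity for (1), and Proposition \ref{lemma:dominancecriteria} for (2) already set up in the surrounding discussion. You have simply made those steps explicit.
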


\begin{proof}
This is an immediate consequence of Corollary \ref{strategy}.
\end{proof}

\bibliographystyle{alpha}

\end{document}